\documentclass[12pt,a4paper]{amsart}
\usepackage[T1]{fontenc}
\usepackage[top=35mm, bottom=35mm, left=30mm, right=30mm]{geometry}    
\usepackage{amssymb}
\usepackage{mathtools}
\usepackage{verbatim,enumerate}

\usepackage[looser]{newpxtext}
\usepackage[smallerops]{newpxmath}

\usepackage[hidelinks,pagebackref]{hyperref}
\renewcommand*{\backref}[1]{}
\renewcommand*{\backrefalt}[4]{%
\ifcase #1
Not cited%
\or
(Cited on page~#2)%
\else
(Cited on pages~#2)%
\fi
}

\theoremstyle{definition}
\newtheorem{defn}{Definition}[section]

\newtheorem{rem}[defn]{Remark}
\newtheorem{ques}[defn]{Question}

\theoremstyle{plain}
\newtheorem{thm}[defn]{Theorem}
\newtheorem{lem}[defn]{Lemma}
\newtheorem{prop}[defn]{Proposition}
\newtheorem{coro}[defn]{Corollary}

\newtheorem{remark}[defn]{Remark}

\title[Typical dynamical properties of operators on $\ell_p$]
{Typical dynamical properties of operators on $\ell_p$}

\author[J. Li]{Jian Li}
\address[J. Li]{Institute for  Mathematical Sciences and Artificial Intelligence \& Department of Mathematics,
	Shantou University, Shantou, 515821, Guangdong, China}
\email{lijian09@mail.ustc.edu.cn}
\urladdr{https://orcid.org/0000-0002-8724-3050}

\author[Q. Liao]{Qijing Liao}
\address[Q. Liao]{Department of Mathematics,
	Shantou University, Shantou, 515821, Guangdong, China}
\email{liaoqijing1@outlook.com}

\subjclass[2020]{Primary: 47A16; Secondary: 47B37, 37B20.}
	
\keywords{Linear dynamical systems; typical properties; weak mixing; weak disjointness; disjoint hypercyclicity}

\date{\today}

\begin{document}

\begin{abstract}
We investigate the typical dynamical properties of hypercyclic operators in $\mathcal{L}_M(X)$, the set of all bounded linear operators on $X$ whose norms are at most $M$, when $X=\ell_p$, $1< p<\infty$.
We show that, with respect to SOT$^*$, a typical operator $T\in \mathcal{L}_M(X)$ is weakly mixing, is weakly disjoint from a given hypercyclic operator $S$, is not topologically ergodic, and satisfies $(T,T^2,\dotsc,T^k)$ is disjoint hypercyclic for any $k\geq 2$.
We also show that the similar typical dynamical properties for the concrete family $\mathcal{M}=\{I+B_w\in \mathcal{L}(X)\colon w\in c_0(\mathbb{Z})\}$, endowed with the norm topology, 
where $B_w$ is a bilateral weighted backward shift.
\end{abstract}

\maketitle 

%\tableofcontents

\section{Introduction}

Linear dynamics has grown into a very active research field over the past few decades. 
We refer to the monographs \cite{BM09} and \cite{G11} for a comprehensive study of the dynamics of linear operators.
A central notion in this theory is hypercyclicity.
A continuous linear operator on an infinite-dimensional Fr\'echet space over $\mathbb{R}$ or $\mathbb{C}$ is called hypercyclic if there exists a vector whose orbit under the operator is dense in the space.
Ansari \cite{A97} and Bernal \cite{B99} independently established the existence of hypercyclic operators on every infinite-dimensional separable Banach space, and Bonet and Peris \cite{BP98} later extended this result to Fr\'echet spaces.

The existence of hypercyclic operators on a given space naturally leads to the study of the size of the collection of hypercyclic operators.
Wu observed in \cite{W94} that the collection of hypercyclic operators is nowhere dense with respect to the norm topology. 
However, Chan in \cite{C02} proved that hypercyclic operators form an SOT-dense subset of the space of all operators on every infinite-dimensional separable Hilbert space; subsequently, B\`es and Chan in \cite{BC03} extended this result to Fr\'echet spaces.

Given a separable Banach space $X$, we are interested in how abundant the hypercyclic operators are among all bounded linear operators on $X$.
A natural way to quantify this richness is via the notion of typicality in the sense of Baire-category. 
More precisely, let $\mathcal{X}$ be a Baire space and $\Phi$ be a property of its elements.
We say that $\Phi$ holds typically in $\mathcal{X}$ if the set of points satisfying $\Phi$ is comeager (or residual) in $\mathcal{X}$, or equivalently, if it contains a dense $G_\delta$ subset of $\mathcal{X}$.
As mentioned above, hypercyclic operators are far from typical in the norm topology.
This motivates the study of typical behavior with respect to the strong operator topology or the strong-star operator topology, denoted by SOT and SOT$^*$, respectively.
For every separable Banach space $X$ and every $M>0$, 
denote by $\mathcal{L}(X)$ the space of bounded linear operators on $X$.
Then $\mathcal{L}_M(X)$, that is, the set of all $T\in \mathcal{L}(X)$ whose norm is less than or equal to $M$, is a Polish space with respect to SOT. 
Moreover, when $X^*$ is also separable, $\mathcal{L}_M(X)$ is a Polish space with respect to SOT$^*$, see e.g. \cite{P89}.

Although there have been some initial studies on typical properties of operators, they have mainly focused on typical properties of contractions.
In the context of Hilbert space $H$, Eisner showed in \cite{E10} that a typical contraction is unitary for the weak operator topology.
Eisner and Mátrai \cite{EM13} further studied typical contractions 
with respect to SOT, and showed that a typical element of $(\mathcal{L}_1(H),SOT)$ is unitarily equivalent to the operator $B^{(\infty)}$, the countable direct $\ell_2$-sum of the unilateral backward shift $B$ on $\ell_2(\mathbb{N})$.
Hence the SOT gives a completely and rather rigid description of typical operators on Hilbert space, while the SOT$^*$ setting is more delicate and leads to more meaningful typical questions.
Therefore, we always use SOT$^*$ rather than SOT.
More recently, in \cite{GMMlp21,GM22,GMM26}, the authors investigated typical contractions on $\ell_p$-spaces and $c_0$.
In the monograph \cite{GMM21}, Grivaux et al.\@ primarily studied the typical dynamical properties of operators: 
an SOT$^*$-typical hypercyclic operator is weakly mixing but not mixing, has no eigenvalues and admits no non-trivial invariant measure, yet is densely distributionally chaotic, etc.

Building on these results, it is natural to ask whether similar typicality phenomena persist beyond the Hilbert space setting.
More precisely, given a separable infinite-dimensional Banach space $X$ and $M>1$, we wish to determine which dynamical properties hold densely, or even typically, in the closed operator ball $\mathcal{L}_M(X)$ when it is endowed with SOT or SOT$^*$.
However, this appears to be difficult to achieve. 
We have therefore restricted our study to classical sequence spaces, primarily $\ell_p$ spaces.
We go beyond hypercyclicity itself to investigate the typicality of several stronger dynamical properties, including weak mixing, weak disjointness and various forms of disjoint hypercyclicity.
The motivation for studying disjoint hypercyclicity stems from the similarity principle in \cite{S10}, which connects hypercyclicity of direct sums with disjoint hypercyclicity.

The following is the first main result of this paper.

\begin{thm}\label{main-result-1}
Let $X=\ell_p$, $1<p<\infty$ and $M>1$.
Denote $\mathcal{L}_M(X):=\{T\in \mathcal{L}(X)\colon \|T\|\leq M\}$.
Then a typical operator $T\in \mathcal{L}_M(X)$ with respect to SOT$^*$ has the following properties:
\begin{enumerate}
    \item $T$ is weakly mixing;
    \item $T$ is not topologically ergodic;
    \item $T$ is weakly disjoint from a given hypercyclic operator $S$ on a Banach space $Y$;
    \item $(T,T^2,\dotsc,T^k)$ is disjoint hypercyclic for any $k\geq 2$.
\end{enumerate}
\end{thm}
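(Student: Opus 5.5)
The four properties will be treated in the usual Baire-category way. For each one I will write the set of operators having it as a $G_\delta$ subset of the Polish space $(\mathcal{L}_M(X),\mathrm{SOT}^*)$ and then show that the set is dense. A countable intersection of dense $G_\delta$ sets is again dense $G_\delta$, so this is enough.

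\emph{The $G_\delta$ parts.} Fix a countable basis $(U_i)$ of nonempty open sets of $X$.
\begin{enumerate}
\item Weak mixing is hypercyclicity of $T\oplus T$. This holds exactly when, for all $i,j,k,l$, there is an $n$ with $T^{-n}U_i\cap U_k\neq\emptyset$ and $T^{-n}U_j\cap U_l\neq\emptyset$. For fixed $n$, $i$, $k$, the condition $T^{-n}U_i\cap U_k\neq\emptyset$ is SOT-open in $T$, because $T\mapsto T^nx$ is SOT-continuous on bounded sets. So the set is $G_\delta$.
\item Disjoint hypercyclicity of $(T,T^2,\dots,T^k)$ is written in the same way, using open conditions of the form $U_0\cap T^{-n}U_1\cap T^{-2n}U_2\cap\cdots\cap T^{-kn}U_k\neq\emptyset$. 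Intersecting over $k$ keeps it $G_\delta$.
\item Weak disjointness from $S$ means that $T\times S$ is topologically transitive on $X\times Y$. Since $S$ is fixed, this is again a countable intersection of open conditions.
\item For non-ergodicity I will use the standard fact that a topologically ergodic operator has return sets $N(U,V)$ that are syndetic. I will show that the following set is dense $G_\delta$ (then shrink it if needed): for fixed $U,V$ and every $L$, there are arbitrarily long gaps of length $\geq L$ in $N(U,V)$ below some bound. For a fixed finite window the condition ``$T^nU\cap V=\emptyset$ for $n\in[a,a+L]$'' is not open. So I will pick $U=B(x,r)$ and $V=B(y,r)$ and replace the condition by the open condition ``$\|T^nx'-y\|>2r$ for $x'$ in a compact neighbourhood''. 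More simply, I will use $\|T^n x\|<\varepsilon$ uniformly on a finite set, where $x$ lies in a ball around $0$ and $V$ is far from $0$. This reduces to an open condition after an $\varepsilon$-room argument, which I will set up concretely.
\end{enumerate}

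\emph{Density.} Any SOT$^*$-open set in $\mathcal{L}_M(X)$ contains an operator of the form $T_0=P_NAP_N$ with $\|A\|<M$. Here $P_N$ is the coordinate projection onto the first $N$ coordinates, and on $\ell_p$ with $1<p<\infty$ both $P_N$ and $P_N^*$ converge strongly to the identity. Modifying $T_0$ only on the tail $(I-P_N)X$ keeps us in the neighbourhood. So I will take
\[
T=T_0\oplus B_w
\]
on $P_NX\oplus(I-P_N)X$, with $B_w$ a weighted backward shift on the tail that feeds into the first block (an ``upper triangular'' perturbation). The weights are chosen so that $\|T\|\leq M$ and so that $T$ satisfies the Hypercyclicity Criterion in a strong form. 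More precisely, $T$ should satisfy the $d$-Hypercyclicity Criterion for $(T,T^2,\dots,T^k)$ along a common sequence $(n_r)$, with a dense set of vectors killed by powers of $T$ (finitely supported vectors) and a right inverse $R$ with $R^n y\to0$.

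This construction gives the remaining properties as follows.
\begin{itemize}
\item Property (1) follows, since the criterion implies weak mixing.
\item For (4), the $d$-criterion for powers needs $T^{jn_r}R^{in_r}y\to0$ for $i\neq j$. This holds when the weights are near $M>1$ on the relevant range, so the forward shift $R$ is scaled by $1/M$.
\item For (3), weak disjointness from an arbitrary hypercyclic $S$ is the delicate point. I plan to use that an operator satisfying the Hypercyclicity Criterion along \emph{every} sufficiently sparse sequence, i.e.\ a mixing-type operator, makes $T\times S$ transitive for any transitive $S$. So the dense operators will be chosen \emph{mixing* (topologically mixing)}, which suffices since mixing $\times$ transitive is transitive.
\item For (2), the dense class must be different: a block operator with huge weights on some long stretches and tiny weights on others, so that return times have long gaps.
\end{itemize}
Each property only needs its own dense class, so this causes no conflict.

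\emph{Main obstacle.} The hardest step is making the tail shift compatible with the arbitrary finite block $T_0$ while keeping $\|T\|\leq M$. The norm of $T_0$ can be close to $M$, which leaves little room for the coupling. I would first try a small rescaling: replace $T_0$ by $(1-\delta)T_0$, which stays in the neighbourhood, and use the freed norm $\delta$ to connect the block to the shift. A second obstacle is the exact open/closed formulation in (2).
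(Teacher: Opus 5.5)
Your outline for (1), (3) and (4) is sound. The paper also gets density in (1) and (3) from a mixing block operator $(x_0,x_1,\dots)\mapsto(Ax_0+\delta x_1,wx_2,wx_3,\dots)$. Your route to (4), the B\`es--Peris criterion applied to this same operator, is a legitimate alternative to the paper's separate chain construction. One correction there: finitely supported vectors are not killed by powers of this $T$, since $T$ acts as $A$ on the first block. You need finitely supported vectors plus a small correction in a far block that cancels the first-block contribution; such a correction exists because $\|A\|<w$. The genuine gap is (2).

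For (2) you have no valid $G_\delta$ (or comeager) description. The condition $T^n(U)\cap V=\emptyset$ is SOT-closed. Both replacements you suggest only constrain $T^n$ at finitely many vectors or on a compact set, and neither implies $T^n(U)\cap V=\emptyset$ for the open set $U$. A ball in $\ell_p$ has no compact neighbourhood, and since $N(K,V)\subset N(U,V)$ for $K\subset U$, gaps in $N(K,V)$ give nothing.

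In fact, at least for $p=2$, no argument built from conditions on $T^n$ applied to vectors can work. Such a $G_\delta$ set, if SOT$^*$-dense, is SOT-dense and hence SOT-comeager. But by Eisner--M\'atrai \cite{EM13}, rescaled by $M$, an SOT-typical $T\in\mathcal{L}_M(\ell_2)$ is unitarily equivalent to $MB^{(\infty)}$, which is mixing and hence topologically ergodic. So your set would be an SOT-comeager subset of the SOT-meager set of non-ergodic operators, which is impossible. Non-ergodicity is a genuinely SOT$^*$ phenomenon, and the open conditions must involve $T^*$. A dense class of ``gap'' operators would not help without such a description, and weights cannot exceed $M$ anyway.

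The missing idea is the paper's. By Moothathu \cite{M13}, a topologically ergodic $T$ cannot have a hypercyclic adjoint. Since $T\mapsto T^*$ is an SOT$^*$-homeomorphism of $\mathcal{L}_M(\ell_p)$ onto $\mathcal{L}_M(\ell_q)$, part (1) applied on $\ell_q$ makes $\{T\colon T^*\text{ hypercyclic}\}$ a dense $G_\delta$ disjoint from $\operatorname{TERG}_M(X)$.

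Your ``small orbit'' idea does work once it is moved to the dual:
\begin{itemize}
\item Fix $\varphi\neq 0$ in $\ell_q$, let $U$ be the open unit ball, and let $V=\{y\colon |\varphi(y)|>1\}$.
\item The set $\bigcap_{L}\bigcup_{n}\{T\colon \|T^{*n}\varphi\|<M^{-L}\}$ is an SOT$^*$-$G_\delta$.
\item It is dense, since it contains every $T$ with $\varphi\in\operatorname{HC}(T^*)$; that set is dense by the prescribed-vector result on $\ell_q$, transported by the adjoint map.
\item For $T$ in it, $\|x\|<1$ and $0\le j\le L$, one gets $|\varphi(T^{n+j}x)|=|(T^{*(n+j)}\varphi)(x)|\le M^{j}\|T^{*n}\varphi\|<1$.
\end{itemize}
Hence $N(U,V)\cap[n,n+L]=\emptyset$ for every $L$ (with $n$ depending on $L$), so $N(U,V)$ is not syndetic.
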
 

\begin{remark}
    Let $X=\ell_p$, $1<p<\infty$, and let $q$ be the conjugate exponent of $p$. 
    Since $X^*=\ell_q$ and $X$ is reflexive, the adjoint map 
    \[
        \Phi:\mathcal{L}_M(X)\to \mathcal{L}_M(X^*),\quad \Phi(T)=T^*
    \]
    is a homeomorphism when both operator balls are endowed with SOT$^*$. 
    Therefore, any residual result established for operators on $\ell_q$ can be transferred to a corresponding residual result for the adjoints of operators on $\ell_p$.
\end{remark} 

Weighted shift perturbations of the identity have also attracted considerable interest in linear dynamics.
In \cite{S95}, Salas proved that every operator of the form $I+B_w$, where $B_w$ is a unilateral weighted backward shift with a positive weight $w$, is hypercyclic on $\ell_p(\mathbb{N}_0)$. 
However, the bilateral case is much more complicated.
Rodr\'iguez-Mart\'inez in \cite[Proposition 2.4]{RA12} showed that, for a dense $G_\delta$ set of weights $w\in c_0(\mathbb{Z})$, both $I+B_w$ and $I+B_w^*$ are hypercyclic on $\ell_2(\mathbb{Z})$.
Related existence results were obtained by Shkarin in \cite[Corollary 6.13]{S15}.
Nevertheless, these results do not characterize the hypercyclicity of $I+B_w$ in the bilateral setting. Shkarin raised this question in \cite[Problem 13.25]{S15}.
On the other hand, \cite[Corollary 13.24]{S15} shows that symmetric weights do not yield hypercyclic operators of the form $I+B_w$ on $\ell_2(\mathbb{Z})$.

This motivates our study of typical dynamical properties for the concrete family $\mathcal{M}=\{I+B_w\in \mathcal{L}(X)\colon w\in c_0(\mathbb{Z})\}$, endowed with the norm topology.
Since this family is naturally parametrized by $c_0(\mathbb{Z})$ and the map $w\to I+B_w$ identifies the operator norm with the sup norm on weights, we study typicality in $\mathcal{M}$ with respect to the norm topology.

The second main result of this paper is as follows.

\begin{thm}\label{main-result-2}
Let $X=\ell_p(\mathbb{Z})$, $1<p<\infty$ and $\mathcal{M}=\{I+B_w\colon w\in c_0(\mathbb{Z})\}\subset (\mathcal{L} (X),\|\cdot\|)$. 
Then a typical operator $I+B_w\in \mathcal{M}$ with respect to the norm topology has the following properties:
\begin{enumerate}
    \item $I+B_w$ is weakly mixing;
    \item $I+B_w$ is not topologically ergodic;
    \item $I+B_w$ is weakly disjoint from a given hypercyclic operator $S$ on a Banach space $Y$.
    \item  $(I+B_w,(I+B_w)^2,\dotsc,(I+B_w)^k)$ is disjoint hypercyclic for any $k\geq 2$.
\end{enumerate}
\end{thm}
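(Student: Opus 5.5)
Since $\mathcal{M}$ is isometric to $c_0(\mathbb{Z})$ via $w\mapsto I+B_w$ (because $\|B_w-B_{w'}\|=\|w-w'\|_\infty$), it is a Polish space. The plan is to show that each of the four properties defines a $G_\delta$ subset of $\mathcal{M}$, and then to show that each of these sets is dense.

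\textbf{The $G_\delta$ part.} Fix a countable base $(U_i)$ of $X$ and a countable base $(V_j)$ of $Y$. Weak mixing of $T=I+B_w$ means $T\oplus T$ is topologically transitive, which can be written as
\[
\bigcap_{i,j,k,l}\bigcup_n \{T:\ T^n U_i\cap U_j\neq\emptyset,\ T^nU_k\cap U_l\neq\emptyset\}.
\]
Each inner set is open in the norm topology, since $T\mapsto T^n$ is norm continuous. Properties (3) and (4) are handled the same way:
\begin{itemize}
\item[(3)] transitivity of $T\oplus S$;
\item[(4)] for fixed $k$, the condition $\bigcap_{i,j_1,\dots,j_k}\bigcup_n\{T: \exists x\in U_i,\ T^{mn}x\in U_{j_m}\ \forall m\le k\}$, which for $d$-hypercyclicity is equivalent to $d$-transitivity on a separable Banach space by the Bès--Peris criterion.
\end{itemize}
Intersecting over $k\ge2$ keeps the set $G_\delta$.

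For (2), I would use that topological ergodicity means $N(U,V)$ is syndetic for all open $U,V$. Its negation is a $G_\delta$ condition provided it can be witnessed by one fixed pair of open sets and a uniform quantity. A better route is a spectral obstruction: topologically ergodic operators on a Banach space with dense generalized kernels tend to have spectrum meeting the unit circle in a specific way. I would instead look for a concrete quantity. For $I+B_w$, the sets $\{T:\ \exists$ a gap of length $L$ beyond $N$ in $N(U,V)\}$ are open, and intersecting over $L$ gives a $G_\delta$ set that excludes syndeticity for that pair.

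\textbf{Density.} It suffices to find a dense set of weights for which each property holds. Weights of finite support are dense in $c_0(\mathbb{Z})$. I would perturb such a weight to a $w$ with $w_n\neq0$ for $n\ge N$ and suitable decay, in the spirit of Salas and Rodríguez-Martínez. Shkarin's and Rodríguez-Martínez's results give a dense $G_\delta$ set of hypercyclic $I+B_w$. Since weights can be chosen so that $I+B_w$ satisfies the hypercyclicity criterion, weak mixing is dense, hence typical. The same applies to $d$-hypercyclicity of $(T,\dots,T^k)$: the Bès--Peris $d$-hypercyclicity criterion for $T^m$, $m\le k$, holds for such weights, since the powers $(I+B_w)^m$ are controlled by the same binomial estimates.

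For (3), I would use that $T$ satisfies the hypercyclicity criterion along the full sequence in a "mixing-like" way for special weights. If $I+B_w$ is mixing, then $T\oplus S$ is transitive for every hypercyclic $S$. So I would show that mixing operators are dense in $\mathcal{M}$, for instance unilateral-like weights vanishing at one index for which Salas' argument gives mixing.

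\textbf{Main obstacle.} Property (2) is the hardest. Its density requires operators in $\mathcal{M}$ that are not topologically ergodic, approximating any weight, while (3) forces the typical operator to be non-mixing. I would take the weight $w$ to have long stretches of very small values, so that the orbit of a vector supported in a window escapes $U$ for long return gaps. Those gaps then make the return set non-syndetic, and the property persists under small norm perturbations on finite time horizons, which gives the open sets needed above.
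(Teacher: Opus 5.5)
\textbf{Density in (1), (3), (4).} Your $G_\delta$ descriptions are fine. The density step, however, rests on finding a dense set of weights for which $I+B_w$ \emph{itself} is weakly mixing, mixing, or satisfies the d-hypercyclicity criterion, and you give no way to do this.

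The concrete device you propose fails. If $w_{n_0}=0$, then $((I+B_w)x)_{n_0-1}=x_{n_0-1}$ for all $x$. So $e^*_{n_0-1}$ is fixed by $(I+B_w)^*$, and $I+B_w$ is not even hypercyclic. Any weight with a zero entry behaves this way, e.g.\ a finitely supported weight modified only on $n\geq N$. Salas' argument only gives mixing of the restriction to $\overline{\operatorname{span}}\{e_n\colon n\geq n_0\}$. Hypercyclic approximants need all $w_n\neq 0$, and whether mixing operators are dense in $\mathcal{M}$ is exactly the question the paper leaves open.

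The missing idea is that Baire only needs each open set $\bigcup_n D_{\cdot,n}$ of the $G_\delta$ representation to be dense, and this may be witnessed by non-hypercyclic operators. The paper takes $w'$ close to $w$ with $w'_r=0$ for $r<-N$ and $w'_r\neq 0$ for $r\geq -N$. Vectors supported in $[-L,L]$ then lie in $\ker^\dagger B_{w'}$, and Rodr\'iguez-Mart\'inez's lemma gives $x_k\to x$, $(I+B_{w'})^kx_k\to y$ along the full sequence. This settles (1), and also (3) by taking $n\in N_S(V_p,V_q)$ large. Your unilateral idea works once recast this way.

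For (4), ``binomial estimates'' do not give simultaneous control of $T^n,\dots,T^{kn}$. The paper uses a chain $B_{w'}u_0=0$, $B_{w'}u_m=u_{m-1}$ and identifies $u_m$ with $\binom{t}{m}$, so that $I+B_{w'}$ becomes $P(t)\mapsto P(t+1)$. It then interpolates with $P_n(t)=\sum_{j}L_j(t/n)^{d+1}Q_j(t-jn)$ to get $(I+B_{w'})^{jn}z_n\to x_j$ for $j=0,\dots,k$.

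\textbf{Property (2).} Here there is no valid argument. The sets you call open are not open: $\{T\colon T^k(U)\cap V=\emptyset\}$ is closed in $\mathcal{M}$, so ``a gap of length $L$ beyond $N$'' is an $F_\sigma$ condition, and intersecting over $L$ does not produce a $G_\delta$. Your density heuristic also targets the wrong notion. Non-syndeticity of $N(U,V)$ requires every point of $U$ to miss $V$ on long stretches, not one orbit escaping $U$. Also, it is (2), not (3), that rules out typical mixing, since mixing operators are topologically ergodic.

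The paper avoids all this. By Moothathu's theorem, a topologically ergodic operator has a non-hypercyclic adjoint. With the flip $Ue_n=e_{-n}$ on $\ell_q(\mathbb{Z})$ one has $U(I+B_w)^*U=I+B_{\Gamma w}$, where $(\Gamma w)_n=\overline{w_{1-n}}$ and $\Gamma$ is an isometric involution of $c_0(\mathbb{Z})$. Hence (1), applied on $\ell_q(\mathbb{Z})$, makes $\{I+B_w\colon (I+B_w)^*\text{ hypercyclic}\}$ a dense $G_\delta$ in $\mathcal{M}$. The topologically ergodic operators lie in its meager complement.
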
 

The paper is organized as follows. 
Section 2 is devoted to preliminary material, where we introduce the notations and conventions used throughout the paper.
In Section 3, we prove assertions (1) and (2) of Theorem \ref{main-result-1}.
We investigate the Baire-category size of several natural classes of operators, including weakly mixing, syndetically transitive, mixing, and mildly mixing operators. 
We also consider prescribed vector versions, where a fixed non-zero vector $x\in X$ is required to be hypercyclic.
In Section 4, we prove assertions (3) and (4) of Theorem \ref{main-result-1}.
We mainly study weak disjointness and disjoint hypercyclicity via direct sum.
We prove typical results both for weakly disjoint partners of a fixed hypercyclic operator and weakly disjoint pairs.
We then turn to the diagonal setting, obtaining generic results on disjoint hypercyclicity and its prescribed-vector variant, in which the fixed vector $(x,x)$ is required to be hypercyclic with respect to $T\oplus S$.
In Section 5, we establish typicality results for the concrete family $\mathcal{M}=\{I+B_w\in \mathcal{L}(X) \colon w\in c_0(\mathbb{Z})\}$ of weighted shift perturbations of the identity, endowed with the norm topology.
In particular, we prove Theorem \ref{main-result-2}.

\section{Preliminaries}

In this section we recall some definitions and results which will be used later. 
By $\mathbb{N}$ and $\mathbb{N}_0$ we denote the set of positive integers and non-negative integers.

\subsection{Operators on Banach spaces}
In this subsection we recall some notions and results on operators on Banach spaces that will be used in this paper. 
We refer the reader to e.g. \cite{P89} for more details.

Throughout this paper, $\mathbb{K}$ denotes either the real field $\mathbb{R}$ or the complex field  $\mathbb{C}$.
Let $X$ be a Banach space over $\mathbb{K}$.
Denote the space of bounded linear operators on $X$ by $\mathcal{L}(X)$.
For $T\in \mathcal{L}(X)$, we denote its operator norm by $\|T\|$.
The \emph{dual space} $X^*$ of $X$ is the space of continuous linear functionals on $X$, which is a Banach space with respect to the norm topology.
The norm topology is induced by the norm $\|T\|=\sup_{\|x\|\leq 1}\|Tx\|$.
For $T\in \mathcal{L}(X)$, the \emph{adjoint} $T^*$ of $T$ is defined by $\langle x,T^*f \rangle=\langle Tx,f\rangle$ for all $x\in X$ and $f\in X^*$.
Moreover, $\|T^*\|=\|T\|$. 

\begin{defn}
    Let $X$ be a Banach space. The \emph{strong operator topology} (or SOT for short) on $\mathcal{L}(X)$ is defined as follows: any $T\in \mathcal{L}(X)$ has a neighborhood basis consisting of sets of the form 
    \[
        U_{T;x_1,\dotsc, x_n;\epsilon}=\{S\in \mathcal{L}(X)\colon \|(S-T)x_k\|<\epsilon \quad \text{for} \ k=1,\dotsc,n \},
    \]
    where $x_1,\dotsc, x_n\in X$.
    The SOT is the topology of pointwise convergence of operators. 
    Thus, a sequence (or more generally a net) $(T_i)_i\subset \mathcal{L}(X)$ is SOT-convergent to $T\in \mathcal{L}(X)$ if and only if $T_i x\to Tx$ in norm for every $x\in X$.

    The \emph{strong$^*$ operator topology} (or SOT$^*$ for short), which is the self-adjoint version of SOT, on $\mathcal{L}(X)$ is defined as follows: a basis of SOT$^*$-neighborhoods of $T\in \mathcal{L}(X)$ is provided by the sets of the form
    \[
        U_{T;x_1,\dotsc, x_n;y_1,\dotsc,y_m;\epsilon}=\left \{
        S\in \mathcal{L}(X)\ \middle|\ 
        \begin{aligned}
            \|(S-T)x_i\|<\epsilon, \quad i=1,\dotsc,n \ \\
            \|(S-T)^* y_j\|<\epsilon, \quad j=1,\dotsc,m
        \end{aligned}
        \right \},
    \]
    where $x_1,\dotsc, x_n\in X$, $y_1,\dotsc, y_m\in X^*$, $n,m\geq 1$, and $\epsilon>0$.
    Thus, a net $(T_i)_i\subset \mathcal{L}(X)$ is SOT$^*$-convergent to $T\in \mathcal{L}(X)$ if and only if $T_i \to T$ and $T_i^* \to T^*$ with respect to SOT.
\end{defn}

Among these topologies, the SOT is the coarsest, the norm topology is the finest, and the SOT$^*$ lies strictly in between. 

On the whole space $\mathcal{L}(X)$, the SOT and SOT$^*$ generally fail to provide a suitable Baire-category setting.
In general, it is neither metrizable nor second countable.
Therefore, we restrict our attention to the bounded operator ball $\mathcal{L}_M(X)$.
For any $M>0$, denote
\[
\mathcal{L}_M(X):=\{T\in \mathcal{L}(X):\|T\|\leq M\}.
\]

We next show that every closed ball $\mathcal{L}_M(X)$ is a Polish space (i.e. separable and completely metrizable space) with respect to these topologies.

\begin{prop}[{\cite[Section 4.6.2]{P89}}]
Let $X$ be a separable Banach space with a separable dual.
Then for every $M>0$, $\mathcal{L}_M(X)$ is a Polish space with respect to both SOT and SOT$^*$.
\end{prop}

We shall mainly work with the sequence spaces $\ell_p$, $1\leq p<\infty$ and $c_0$.

\begin{defn}
    For $1\leq p<\infty$, the unilateral sequence space $\ell_p(\mathbb{N}_0)$ is defined by 
    \[
        \ell_p(\mathbb{N}_0):=\biggl\{ x=(x_n)_{n\geq 0}\colon \sum_{n=0}^\infty |x_n|^p<\infty \biggr \},
    \]
    equipped with the norm $\|x\|_p=(\sum_{n=0}^\infty |x_n|^p)^{\frac{1}{p}}$.
    The bilateral sequence space $\ell_p(\mathbb{Z})$ is defined by 
    \[
        \ell_p(\mathbb{Z}):=\biggl\{ x=(x_n)_{n\in \mathbb{Z}}\colon \sum_{n\in \mathbb{Z}} |x_n|^p<\infty \biggr \},
    \]
    equipped with the norm $\|x\|_p=(\sum_{n\in \mathbb{Z}} |x_n|^p)^{\frac{1}{p}}$.
    The unilateral sequence space $c_0(\mathbb{N}_0)$ is defined by 
    \[
        c_0(\mathbb{N}_0):=\left\{ x=(x_n)_{n\geq 0}\colon  x_n\to 0 \text{ as } n\to \infty \right \},
    \]
    equipped with the norm $\|x\|_\infty=\sup_{n\geq 0}|x_n|$.
\end{defn}
Similarly one can define the bilateral sequence space $c_0(\mathbb{Z})$.
Throughout this paper, when we write just $\ell_p$ (or $c_0$), it defaults to $\ell_p(\mathbb{N}_0)$ (or $c_0(\mathbb{N}_0)$).
The canonical basis of $\ell_p(\mathbb{N}_0)$ and $c_0(\mathbb{N}_0)$ is denoted by $(e_n)_{n\geq 0}$, that is, $e_n=(\delta_{kn})_{k\geq 0}$.
We denote by $c_{00}$ the space of finitely supported sequences.
For $x\in c_{00}$, denote $\operatorname{supp}(x)=\{i\colon x_i\neq 0\}$.
The subspace $c_{00}$ is dense in $\ell_p$.
It is well known that 
$(\ell_p)^*\cong \ell_q$ $(\frac{1}{p}+\frac{1}{q}=1)$, $(\ell^1)^*\cong \ell^\infty$, and $(c_0)^*\cong \ell^1$.

\subsection{Linear dynamics}
In recent years, research on linear dynamical systems has attracted increasing attention. For further details, see \cite{BM09} and \cite{G11}.

Let $X$ be a Banach space and $T \colon X \to X$ be a continuous linear operator.
We say that the pair  $(X,T)$ is  a \emph{linear dynamical system}.
For brevity, we denote it by the operator $T$, or the map $T \colon X \to X$.

\begin{defn}
    An operator $T\colon X\to X$ is said to be \emph{hypercyclic} if there is some $x\in X$ such that the orbit of $x$ under $T$, that is, $\{T^nx \colon n\geq 0\}$ is dense in $X$. Such an $x$ is called a \emph{hypercyclic vector} for $T$. We denote the set of hypercyclic vectors for $T$ by $\operatorname{HC}(T)$.
\end{defn}

For each pair of non-empty open subsets $U$ and $V$ of $X$, we recall that the return time set is defined by 
\[
    N_T(U,V)=N(U,V):=\{n\in\mathbb{N}_0 \colon  T^n(U)\cap V\neq\emptyset \}.
\]
For every $x\in X$ and $U\subset X$, we denote
\[
    N_T(x,U)=N(x,U):=\{n\in\mathbb{N}_0 \colon  T^nx\in U \}.
\]
An operator $T$ is said to be \emph{topologically transitive} if $N_T(U,V)\neq\emptyset$.
$T$ is \emph{strongly mixing} (or \emph{mixing}) if all sets $N_T(U,V)$ are cofinite rather than just non-empty, and \emph{weakly mixing} if the product system $(X\times X,T\times T)$ is topologically transitive. 
Equivalently, if $N_T(U_1,V_1)\cap N_T(U_2,V_2)\neq \emptyset$ for any non-empty open sets $U_1,V_1,U_2,V_2$ of $X$.
We set 
\[
    \operatorname{HC}(X):=\{T\in \mathcal{L}(X) \colon T \text{ is hypercyclic}\},
\]
\[
    \operatorname{MIX}(X):=\{T\in \mathcal{L}(X) \colon T \text{ is mixing}\} 
\]
and 
\[
    \operatorname{WMIX}(X):=\{T\in \mathcal{L}(X) \colon T \text{ is weakly mixing}\}
\]
Thus, 
\[
    \operatorname{MIX}(X)\subset \operatorname{WMIX}(X) \subset \operatorname{HC}(X).
\]

By the Birkhoff transitivity theorem (see e.g.\@ \cite[Theorem 1.16]{G11}), an operator $T\colon X\to X$ is hypercyclic if and only if it is topologically transitive. 
Moreover, whenever these equivalent conditions hold, the set $\operatorname{HC}(T)$ of hypercyclic vectors  is a dense $G_\delta$ subset of $X$.

The notion of weak disjointness was introduced by Peleg in topological dynamics \cite{P72}. 
In linear dynamics, it is equivalent to the hypercyclicity of direct sums, a topic studied in \cite{B07,BP07,BMPP19,C24,S95}. 
Recently, in \cite{LLR25}, Li et al.\@ introduced the concept of weak disjointness within the framework of linear dynamical systems.

\begin{defn} %[{\cite[Definition 3.1]{LLR25}}]
Given two operators $T \colon X\to X$ and $S \colon Y\to Y$, we call them \emph{weakly disjoint} if their direct sum $T\oplus S$ is hypercyclic.
\end{defn}

\begin{defn}
     A subset $F$ of $\mathbb{N}_0$ is called \emph{syndetic} if $F$ has bounded gaps.
     A subset $F$ of $\mathbb{N}_0$ is called \emph{thickly syndetic} if for every $k$ there exists a syndetic set $S$ such that $S+\{0,\dotsc,k\}\subset F$.

    An operator $T\colon X\to X$ is said to be \emph{topologically ergodic} (or \emph{syndetically transitive}) provided that $N(U,V)$ is syndetic  for each pair of non-empty open subsets $U$ and $V$ of $X$.
    An operator $T\colon X\to X$ is said to be \emph{thickly syndetic transitive} provided that $N(U,V)$ is thickly syndetic for each pair of non-empty open subsets $U$ and $V$ of $X$.
\end{defn}

In the setting of linear dynamics, topologically ergodic operators are weakly mixing \cite[Theorem 6.31]{BM09}. 
If $T$ is topologically ergodic, then $T$ is thickly syndetic transitive \cite[Lemma 2.3]{BMPP19}.

In 2024, Cardeccia characterized in \cite{C24} the operators which are weakly disjoint from all weakly mixing operators.

\begin{thm}[{\cite[Theorem 5.3]{C24}}]\label{treg-wd-weak-mixing}
Let $T\colon X\to X$ be an operator. 
The following assertions are equivalent: 
    \begin{enumerate}
        \item $T$ is topologically ergodic; 
         \item $T$ is weakly disjoint from all weakly mixing operators; 
        \item $T\oplus S$ is weakly mixing for each weakly mixing operator $S\colon Y\to Y$. 
    \end{enumerate}
\end{thm}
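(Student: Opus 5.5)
The plan is to prove the cycle of implications (1)$\Rightarrow$(3)$\Rightarrow$(2)$\Rightarrow$(1). Throughout I use that in linear dynamics topologically ergodic operators are weakly mixing and thickly syndetic transitive (\cite[Theorem 6.31]{BM09}, \cite[Lemma 2.3]{BMPP19}), and that weak mixing of $S$ means $N_S(U_1,V_1)\cap N_S(U_2,V_2)\neq\emptyset$ for all non-empty open sets.

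For (1)$\Rightarrow$(3), let $S$ be weakly mixing on $Y$. To show $T\oplus S$ is weakly mixing, it suffices to check that $(T\oplus S)\times(T\oplus S)$ is transitive. Up to rearranging coordinates this operator is $(T\oplus T)\oplus(S\oplus S)$, so we need
\[
N_T(U_1,V_1)\cap N_T(U_2,V_2)\cap N_S(A_1,B_1)\cap N_S(A_2,B_2)\neq\emptyset .
\]
Since $T\oplus T$ is again topologically ergodic (the first step below), the set $N_T(U_1,V_1)\cap N_T(U_2,V_2)=N_{T\oplus T}(U_1\times U_2,V_1\times V_2)$ is thickly syndetic. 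Also $S\oplus S$ is weakly mixing, so it suffices to show that a thickly syndetic set $F$ meets $N_R(A,B)$ for every weakly mixing $R$ and non-empty open $A,B$. For this I use the standard trick. Pick $k\in N_R(A,B)$, say with $R^kA'\subset B$ for some non-empty open $A'\subset A$. Then $N_R(A',B)\supset k+N_R(A',A')$, so it is enough to hit a translate of the return set. The key point is that for weakly mixing linear $R$, $N_R(W,W)$ contains $N_R(W_1,W)\cap\dots$, and in fact $N_R(U,V)$ is thick: it contains arbitrarily long intervals, because $N_R(U,V)\supset\bigcap_{j\le m}(N_R(U,R^{-j}V)+j)$, and such a finite intersection is non-empty by weak mixing. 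A thick set meets a thickly syndetic set, which finishes this step.

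Topological ergodicity of $T\oplus T$ is the first step, and I expect it to be the main obstacle. I would obtain it from the known fact that in linear dynamics $N_T(U,V)\supset N_T(U',U')+n_0$ and that $N_T(U,U)\cap N_T(V,V)$ contains $N_T(W,W)$ for a suitable $W$. The latter follows from the linearity trick: choose $x$ near $u$ with $T^mx$ near $v$, and use $W=U\cap T^{-m}V$ to get $N(U\times V,U\times V)\supset N(W,W)$. Combining this with the translation argument shows that every return set of $T\oplus T$ contains a translate of a syndetic set, hence is syndetic. If this route stalls, I would instead cite that weak mixing of $T\oplus S$ and hypercyclicity coincide when one factor is ergodic, via \cite{BMPP19}.

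(3)$\Rightarrow$(2) is trivial, since weakly mixing implies hypercyclic.

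For (2)$\Rightarrow$(1), argue by contraposition. Suppose $N_T(U,V)$ is not syndetic for some $U,V$. Then its complement contains intervals $[a_j,a_j+j]$ with the $a_j$ increasing rapidly, so $N_T(U,V)$ misses a thick set $D$. I then build a weakly mixing, in fact mixing, operator $S$ whose return set $N_S(A,B)$ is contained in $D$ for some open $A,B$. Then $N_{T\oplus S}(U\times A,V\times B)=\emptyset$, so $T\oplus S$ is not hypercyclic. For the construction I would take a weighted backward shift on $\ell_2$ whose weights make $\|S^n\|$ tiny off $D$ and large on long blocks inside $D$. Concretely, the weight products are chosen so that $S^n e_0$-type returns near a fixed vector occur only for $n\in D$, while the long blocks give weak mixing via the Hypercyclicity Criterion along a subsequence in $D$. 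This construction is the second delicate point: verifying that $N_S(A,B)\subset D$ requires choosing $A,B$ as small balls around $e_0$ and $2e_0$, and controlling the tail of the orbit with rapidly decaying weights outside $D$.
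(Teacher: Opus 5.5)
The paper quotes this theorem from \cite{C24} without proof, so I am judging your argument on its own. Your cycle (1)$\Rightarrow$(3)$\Rightarrow$(2)$\Rightarrow$(1) is the right architecture. The strategy for (2)$\Rightarrow$(1) is also right: the complement of a non-syndetic return set is thick, and you realise a return set of a weakly mixing weighted shift inside it. But the construction of $S$, as you describe it, fails in three ways.

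\textbf{(i) $S$ cannot be mixing.} If $T$ is not hypercyclic, any weakly mixing $S$ already violates (2), so you may assume $T$ hypercyclic. Then $N_T(U,V)$ is infinite, and a cofinite set $N_S(A,B)$ must meet it. In other words, mixing operators are weakly disjoint from every hypercyclic operator; this is the inclusion $\operatorname{MIX}_M(X)\subset\operatorname{WD}_M^S(X)$ used in the paper. So $S$ must be weakly mixing but not mixing.

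\textbf{(ii) You cannot make $\|S^n\|$ small for $n\notin D$.} A hypercyclic $S$ has $\|S^n\|\ge 1$ for every $n$, since $\|S^{n_0}\|<1$ would make every orbit bounded.

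\textbf{(iii) Rapidly decaying weights off $D$ work against $\sup_n|w_1\cdots w_n|=\infty$.} That condition is exactly what gives weak mixing of a unilateral backward shift. With bounded weights, a block of length $\ell$ recovers at most a factor $\|w\|_\infty^{\ell}$, and the gaps between blocks may be far longer.

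The quantity to control is the partial product $\pi_n=w_1\cdots w_n$, not $\|S^n\|$. For example:
\begin{itemize}
\item Choose disjoint intervals $[a_j,a_j+2\ell_j)\subseteq D$ with $a_j\ge 1$ and $\ell_j\to\infty$.
\item Put $w_i=2$ on the first $\ell_j$ indices of each interval, $w_i=\frac12$ on the next $\ell_j$, and $w_i=1$ elsewhere.
\item Then $\pi_n=1$ for $n\notin D$, while $\sup_n\pi_n=\infty$. So $S=B_w$ on $\ell_p(\mathbb{N}_0)$ is weakly mixing: the Hypercyclicity Criterion holds along the peaks $n_k=a_k+\ell_k-1$, since $\pi_i/\pi_{i+n_k}=\pi_i2^{i-\ell_k}\to 0$.
\item Take $A=B(0,\frac14)$ and $B=B(e_0,\frac14)$. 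If $\|x\|<\frac14$ and $\|S^nx-e_0\|<\frac14$, then $|\pi_nx_n-1|<\frac14$ forces $\pi_n>3$, hence $n\in D$.
\item Therefore $N_T(U,V)\cap N_S(A,B)=\emptyset$, and $T\oplus S$ is not hypercyclic.
\end{itemize}

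In (1)$\Rightarrow$(3) the argument is sound, but two details need fixing.

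First, to show that $T\oplus T$ is topologically ergodic you need a \emph{common} shift. With different shifts $m_1\neq m_2$, the sets $m_1+N_T(W_1,W_1)$ and $m_2+N_T(W_2,W_2)$ are not controlled. Take $n_0\in N_T(U_1,V_1)\cap N_T(U_2,V_2)$, which exists because ergodicity implies weak mixing. Set $W_i=U_i\cap T^{-n_0}V_i$, and $W=W_1\cap T^{-m}W_2$ with $m\in N_T(W_1,W_2)$. Then $N_T(U_1,V_1)\cap N_T(U_2,V_2)\supseteq n_0+N_T(W,W)$, which is syndetic.

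Second, your inclusion for thickness with ``$+j$'' is vacuous. What you want is this: any $n\in\bigcap_{j=0}^{m}N_R(U,R^{-j}V)$ gives $[n,n+m]\subseteq N_R(U,V)$. Such an $n$ exists by weak mixing, because each $R^{-j}V$ is non-empty and open ($R$ has dense range).

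Finally, syndeticity of $N_{T\oplus T}$ already suffices to meet the thick set $N_{S\oplus S}$; thick syndeticity is not needed.
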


Moothathu showed in \cite{M13} that a topologically ergodic operator cannot have a hypercyclic adjoint.

\begin{thm}[{\cite[Theorem 2]{M13}}]\label{T-TERG-T*-not-hyper}
    Let $T\colon X\to X$ be an operator. 
    If $T$ is syndetically transitive, then $T^*$ is not hypercyclic.
\end{thm}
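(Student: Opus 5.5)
Suppose, for a contradiction, that $T$ is syndetically transitive and that $T^*$ is hypercyclic with hypercyclic vector $\varphi\in X^*$. The plan is to build one pair of non-empty open sets $U,V\subset X$ whose return time set $N_T(U,V)$ misses arbitrarily long intervals of integers, so that it is not syndetic.

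The first step is to choose sets whose return times are controlled by the orbit of $\varphi$. Take $V=\{y\in X\colon \operatorname{Re}\langle y,\varphi\rangle>1\}$; this is open, and non-empty because $\varphi\neq 0$ (a hypercyclic vector is non-zero). Take $U=B(0,1)$, the open unit ball. By duality,
\[
\langle T^n x,\varphi\rangle=\langle x,T^{*n}\varphi\rangle .
\]
Hence if $n\in N_T(U,V)$, there is some $x$ with $\|x\|<1$ and $\operatorname{Re}\langle x,T^{*n}\varphi\rangle>1$. This forces $\|T^{*n}\varphi\|>1$. Contrapositively, every $n$ with $\|T^{*n}\varphi\|\le 1$ lies outside $N_T(U,V)$.

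The second step uses hypercyclicity of $T^*$ to produce long runs of such $n$. Fix $N\in\mathbb{N}$ and set $\varepsilon=(1+\|T\|)^{-N}$. The orbit of $\varphi$ is dense in $X^*$, so it comes within $\varepsilon$ of $0$: there is $n$ with $\|T^{*n}\varphi\|<\varepsilon$. For $0\le j\le N$ we then get
\[
\|T^{*(n+j)}\varphi\|\le \|T^*\|^j\,\|T^{*n}\varphi\|<\|T\|^j(1+\|T\|)^{-N}\le 1,
\]
using $\|T^*\|=\|T\|$. So the whole block $\{n,n+1,\dots,n+N\}$ is disjoint from $N_T(U,V)$. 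Since $N$ is arbitrary, $N_T(U,V)$ has unbounded gaps and is not syndetic, contradicting syndetic transitivity of $T$.

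There is no serious obstacle here. The only point needing care is choosing $U$ and $V$ so that membership in $N_T(U,V)$ is governed by a norm bound on $T^{*n}\varphi$. That bound is what lets the single approximation of $0$ by the orbit spread, via the continuity estimate $\|T^{*j}\|\le\|T\|^j$, to a whole block of consecutive times. A density argument alone only yields isolated times.
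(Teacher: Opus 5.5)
Your proof is correct and complete. The unit ball $U$ and the half-space $V=\{y\colon \operatorname{Re}\langle y,\varphi\rangle>1\}$ give $N_T(U,V)\subset\{n\colon \|T^{*n}\varphi\|>1\}$. A single approach of the $T^*$-orbit of $\varphi$ to within $(1+\|T\|)^{-N}$ of $0$ then yields $N+1$ consecutive integers outside $N_T(U,V)$, so this return set is not syndetic.

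The paper does not prove this statement itself; it quotes it from Moothathu \cite{M13}. Your self-contained duality argument fills that in. It also gives slightly more: a syndetically transitive $T$ satisfies $\inf_{n}\|T^{*n}\varphi\|>0$ for every non-zero $\varphi\in X^*$, and completeness of $X$ is never used.
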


Reiterative hypercyclicity was introduced by B\`es, Menet, Peris and Puig in their study of recurrence properties of hypercyclic operators \cite{BMPP16}.
\begin{defn}
    An operator $T\colon X\to X$ is said to be \emph{reiteratively hypercyclic} if there exists a vector $x\in X$ such that, for every non-empty open set $U\subset X$, the return set $N_T(x,U)$ has positive upper Banach density.
\end{defn}

In the same paper \cite{BMPP16}, they proved that reiterative hypercyclicity implies topological ergodicity.

\begin{prop}[{\cite[Proposition 25]{BMPP16}}]\label{T-reit-T-terg}
    Let $X$ be a separable $F$-space and $T\in \mathcal{L}(X)$. 
    If $T$ is reiteratively hypercyclic, then $T$ is topologically ergodic.
\end{prop}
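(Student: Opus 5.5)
The plan is to reduce syndeticity of an arbitrary return set $N(U,V)$ to syndeticity of a difference set $B-B$, where $B$ is a set of positive upper Banach density. Let $x$ be a reiteratively hypercyclic vector for $T$ and fix non-empty open sets $U,V\subset X$.

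First, a reiterative hypercyclic vector is in particular hypercyclic, so $T$ is topologically transitive. Hence there is $j\in\mathbb{N}_0$ with $W:=U\cap T^{-j}(V)\neq\emptyset$. This $W$ is open, satisfies $W\subset U$ and has $T^j(W)\subset V$.

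Next I claim $N(W,W)+j\subset N(U,V)$. Indeed, suppose $w\in W$ and $T^mw\in W$. Then $w\in U$ and $T^{m+j}w=T^j(T^mw)\in V$, so $m+j\in N(U,V)$. Since a translate of a syndetic set is syndetic and a superset of a syndetic set is syndetic, it suffices to show that $N(W,W)$ is syndetic.

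Put $B:=N_T(x,W)$. By assumption $B$ has positive upper Banach density $d>0$. If $a,b\in B$ with $b>a$, then $T^ax\in W$ and $T^{b-a}(T^ax)=T^bx\in W$, so $b-a\in N(W,W)$. Thus $N(W,W)\supset (B-B)\cap\mathbb{N}$.

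The main step is the classical combinatorial fact that $(B-B)\cap\mathbb{N}$ is syndetic whenever $B\subset\mathbb{N}_0$ has positive upper Banach density. The argument runs as follows.
\begin{enumerate}
\item Choose $t_1<t_2<\dots<t_r$ in $\mathbb{N}_0$ such that the translates $B+t_i$ are pairwise disjoint, and such that $r$ is maximal.
\item Such a maximal finite family exists with $r\le 1/d$. The translates are disjoint, each has upper Banach density $d$, and this density is attained along a common sequence of intervals. Indeed, disjoint translates of $B$ all visit long intervals $[N,N+L)$ with frequency close to $d$ once $L\gg t_r$, so $rd\le 1$.
\item By maximality, for every $n\in\mathbb{N}_0$ the set $B+n$ meets some $B+t_i$, or else $n$ equals some $t_i$. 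In either case $n-t_i\in B-B$ for some $i$.
\item Hence every interval of length $2t_r+1$ contains an element of $B-B$ that is positive and lies within $t_r$ of that interval. Using the symmetry $B-B=-(B-B)$, this gives that $(B-B)\cap\mathbb{N}$ has gaps bounded by roughly $2t_r+1$.
\end{enumerate}

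The only genuinely delicate point is step (2), that is, making the density count rigorous for translates. I would handle it by taking intervals $I_k$ realising $d$ for $B$. Then $|(B+t_i)\cap (I_k+t_i)|/|I_k|\to d$, and the sets $I_k+t_i$ differ from $I_k$ by at most $t_r$ elements, which is negligible as $|I_k|\to\infty$. Disjointness then forces $rd\le 1$.

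Combining the steps, $N(U,V)$ is syndetic for all non-empty open $U,V$, i.e.\ $T$ is topologically ergodic. Nothing beyond continuity and transitivity is used, so the argument works in any separable $F$-space.
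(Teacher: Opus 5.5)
Your argument is correct and follows the standard proof of the cited result; the paper itself gives no proof, only the reference to \cite{BMPP16}. You pass to $W=U\cap T^{-j}(V)$, use $N(W,W)+j\subset N(U,V)$ and $N(W,W)\supset (B-B)\cap\mathbb{N}$ for $B=N_T(x,W)$, and prove Furstenberg's lemma (difference sets of sets of positive upper Banach density are syndetic) correctly via a maximal family of disjoint translates.

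The only untidy spot is step (4), where the symmetry of $B-B$ is not needed: for $n>t_r$, maximality gives some $n-t_i\in B-B$ with $1\le n-t_r\le n-t_i\le n$, so $(B-B)\cap\mathbb{N}$ meets every interval $[m,m+t_r]$ with $m\geq 1$.
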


The notion of mild mixing was first introduced in ergodic theory by Furstenberg and Weiss \cite{FW06}. 
Motivated by this ergodic‑theoretic concept, Glasner \cite{G04} and, independently, Huang and Ye \cite{HY04} later introduced a topological analogue.
Later, this notion was formally introduced into the framework of linear dynamical systems in \cite{LLR25}.

\begin{defn} 
An operator $T\colon X\to X$ is said to be \emph{mildly mixing} provided that it is weakly disjoint from all hypercyclic operators. 
\end{defn}

B\`es and Peris in \cite{BP07} and Bernal-Gonz\'alez in \cite{B07} independently introduced the notion of d-hypercyclicity (i.e. disjoint hypercyclicity).

\begin{defn}
    Let $X$ be a topological vector space, $m\geq 2$ and $\mathbb{T}=(T_1,\dotsc,T_m)\in \mathcal{L}(X)^m$.  
    The $m$-tuple $\mathbb{T}$ is called \emph{d-hypercyclic} if there exists $x\in X$ such that 
    \[
        \{(T_1^nx,\dotsc,T_m^nx) \colon n\geq 0\}
    \]
    is dense in $X^m$.
    We denote by $\operatorname{dHC}(T_1,\dotsc,T_m)$ the set of all vectors $x\in X$ such that $\{(T_1^nx,\dotsc,T_m^nx) \colon n\geq 0\}$ is dense in $X^m$.
    
    The $m$-tuple $\mathbb{T}$ is called \emph{densely d-hypercyclic} if $\operatorname{dHC}(T_1,\dotsc,T_m)$ is dense in $X$.
    Clearly, dense d-hypercyclicity implies d-hypercyclicity.
\end{defn}

\begin{defn}
    Let $X=\ell_p(\mathbb{Z})$, $1\leq p<\infty$, or $X=c_0(\mathbb{Z})$, and $w=(w_n)_{n\in \mathbb{Z}}$ be a bounded scalar sequence, called a weight sequence.
    The weighted backward shift $B_w \colon X\to X$ is defined by 
    \[
        B_w(x_n)_{n\in \mathbb{Z}}=(w_{n+1}x_{n+1})_{n\in \mathbb{Z}},
    \]
    for every $x=(x_n)_{n\in \mathbb{Z}}\in X$.
\end{defn}
Similarly one can define the unilateral weighted backward shift on spaces $X=\ell_p(\mathbb{N}_0)$, $1\leq p<\infty$, or $X=c_0(\mathbb{N}_0)$.

\section{Typical dynamical properties of operators on  \texorpdfstring{$\ell_p$}{lp}}

In this section, we show that when $X=\ell_p$, $1<p<\infty$ and $M>1$, a typical operator in $\mathcal{L}_M(X)$ with respect to SOT$^*$ is weakly mixing but not topologically ergodic. 
Additionally, we determine the Borel complexity of the set of topologically ergodic operators and show that this set is Borel but meager.
Moreover, given a fixed non-zero vector $x\in X$, we study the typical questions of those operators for which $x$ is hypercyclic.
Recall that Grivaux et al.\@ established in \cite{GMM21} that, on $\ell^2$ and $M>1$, a typical operator in $\mathcal{L}_M(\ell^2)$ with respect to SOT$^*$ is weakly mixing yet not mixing.
We are motivated by this result.

Grivaux et al.\@ provided a criterion in \cite[Lemma 2.6]{GMM21} for the density of a class of operators in the unit ball of bounded operators with respect to SOT (or SOT$^*$). 
An analogous result holds for the spaces $\ell_p$ and $c_0$.

\begin{lem}\label{dense criteria-lp}
    Let $X=\ell_p$, $1\leq p<\infty$, and $\Gamma(X)\subset \mathcal{L}_M(X)$ for some $M>0$. 
    Let $(e_k)_{k\geq 0}$ be the canonical basis of $X$, and for each $r\geq 1$, denote $X_r=\operatorname{span}\{e_0,\dotsc,e_{r-1}\}$, $P_r(x_0,x_1,\dotsc)=(x_0,\dotsc,x_{r-1},0,\dotsc)$. 
    Suppose that for each $r\geq 1$, $A\in \mathcal{L}(X_r)\cap \mathcal{L}_M(X)$ and every $\epsilon>0$, there exists  $T\in \Gamma(X)$ such that 
    \[
        \|(T-A)e_k\|_p<\epsilon, \quad k=0,\dotsc,r-1.
    \]
    Then $\Gamma(X)$ is dense in $(\mathcal{L}_M(X),SOT)$.

    Moreover, if $1<p<\infty$ and $\frac{1}{p}+\frac{1}{q}=1$, and if one can choose $T\in \Gamma(X)$ such that in addition
    \[
        \|(T^*-A^*)e_k\|_q<\epsilon, \quad k=0,\dotsc,r-1.
    \]
    Then $\Gamma(X)$ is dense in $(\mathcal{L}_M(X),SOT^*)$.
\end{lem}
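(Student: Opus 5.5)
We argue directly from the definition of the SOT (resp. SOT$^*$) neighbourhood basis. Note that $A\in\mathcal{L}(X_r)\cap\mathcal{L}_M(X)$ should be read as an operator on $X$ with $\|A\|\le M$, $A(X)\subset X_r$ and $A=AP_r$. Fix $T_0\in\mathcal{L}_M(X)$, vectors $x_1,\dotsc,x_n\in X$ and $\epsilon>0$. We must find $T\in\Gamma(X)$ with $\|(T-T_0)x_i\|_p<\epsilon$ for all $i$; in the SOT$^*$ case we must also have $\|(T-T_0)^*y_j\|_q<\epsilon$ for given $y_1,\dotsc,y_m\in\ell_q$. Set $C=\max(1,\max_i\|x_i\|_p,\max_j\|y_j\|_q)$.

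\emph{Step 1: compress $T_0$.} Put $A_r=P_rT_0P_r$. Since $\|P_r\|=1$ on $\ell_p$, we have $\|A_r\|\le\|T_0\|\le M$ and $A_r\in\mathcal{L}(X_r)\cap\mathcal{L}_M(X)$. We claim $A_r\to T_0$ in SOT. Indeed, for $x\in X$,
\[
\|(T_0-A_r)x\|_p\le\|(I-P_r)T_0x\|_p+\|P_rT_0(I-P_r)x\|_p\le\|(I-P_r)T_0x\|_p+M\|(I-P_r)x\|_p,
\]
and both terms tend to $0$ because $P_rz\to z$ for every $z\in\ell_p$ when $p<\infty$. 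For the adjoint, $A_r^*=P_r^*T_0^*P_r^*$, and $P_r^*$ is the same coordinate projection on $\ell_q$. Since $q<\infty$ exactly when $p>1$, the same estimate gives $A_r^*\to T_0^*$ in SOT. Choose $r$ so large that $\|(T_0-A_r)x_i\|_p<\epsilon/3$, $\|(I-P_r)x_i\|_p<\epsilon/(3(2M+1))$, and, in the second case, the analogous bounds hold for the $y_j$ in $\ell_q$.

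\emph{Step 2: perturb on $X_r$ and control the tails.} Apply the hypothesis to $A=A_r$ with tolerance $\delta$, obtaining $T\in\Gamma(X)$ with $\|(T-A)e_k\|_p<\delta$ for $k<r$. Split each $x_i=P_rx_i+(I-P_r)x_i$. For the head, write $P_rx_i=\sum_{k<r}x_{i,k}e_k$ with $|x_{i,k}|\le C$, so
\[
\|(T-A)P_rx_i\|_p\le rC\delta.
\]
For the tail, $\|(T-A)(I-P_r)x_i\|_p\le 2M\|(I-P_r)x_i\|_p<\epsilon/3$, using $\|T\|,\|A\|\le M$ because $\Gamma(X)\subset\mathcal{L}_M(X)$. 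Choosing $\delta<\epsilon/(3rC)$ gives
\[
\|(T-T_0)x_i\|_p<\epsilon.
\]
The SOT$^*$ statement is identical, working in $\ell_q$ with $A^*=P_rT_0^*P_r$, the basis $e_k$ of $\ell_q$, and the additional hypothesis on $\|(T^*-A^*)e_k\|_q$.

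\emph{The main obstacle.} The only delicate point is the uniform bound on the tail. Closeness is prescribed only on $e_0,\dotsc,e_{r-1}$, so $T$ is uncontrolled on $(I-P_r)X$. This is handled by the norm bound $M$ inherited from $\Gamma(X)\subset\mathcal{L}_M(X)$, combined with choosing $r$ first, before $\delta$. For the $*$-part, the essential point is that $p>1$ makes $c_{00}$ dense in $X^*=\ell_q$, so that $P_r^*\to I$ in SOT there; this fails for $p=1$, which explains why the second assertion is restricted to $1<p<\infty$.
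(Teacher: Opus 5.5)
Your proof is correct and complete. The paper states this lemma without proof, pointing only to the Hilbert-space criterion \cite[Lemma 2.6]{GMM21}, and your argument is the standard proof of that criterion carried over to $\ell_p$: compress $T_0$ to $P_rT_0P_r\in\mathcal{L}(X_r)\cap\mathcal{L}_M(X)$, handle the head $P_rx_i$ (and $P_ry_j$) through the hypothesis on $e_0,\dotsc,e_{r-1}$, and absorb the tail using the uniform bound $\|T\|=\|T^*\|\le M$, with $r$ fixed before $\delta$ and $q<\infty$ used exactly to get $P_r\to I$ strongly on $\ell_q$. Because the hypothesis here allows $\|A\|\le M$ (rather than $\|A\|<M$), no preliminary rescaling of $T_0$ is needed.
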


\begin{lem}\label{dense criteria-c0}
    Let $X=c_0$ and $\Gamma(X)\subset \mathcal{L}_M(X)$ for some $M>0$. 
    Let $(e_k)_{k\geq 0}$ be the canonical basis of $X$, and for each $r\geq 1$, denote $X_r=\operatorname{span}\{e_0,\dotsc,e_{r-1}\}$, $P_r(x_0,x_1,\dotsc)=(x_0,\dotsc,x_{r-1},0,\dotsc)$. 
    Suppose that for each $r\geq 1$, $A\in \mathcal{L}(X_r)\cap \mathcal{L}_M(X)$ and every $\epsilon>0$, there exists  $T\in \Gamma(X)$ such that 
    \[
        \|(T-A)e_k\|_\infty<\epsilon, \quad  
        \|(T^*-A^*)e_k\|_1<\epsilon, \quad   k=0,\dotsc,r-1.
    \]
    Then $\Gamma(X)$ is dense in $(\mathcal{L}_M(X),SOT^*)$.
\end{lem}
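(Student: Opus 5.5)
The statement to prove is Lemma \ref{dense criteria-c0}, the $c_0$ analogue of Lemma \ref{dense criteria-lp}. The approach is to show that every basic SOT$^*$-neighborhood of an arbitrary $S\in\mathcal{L}_M(c_0)$ meets $\Gamma(X)$. Fix such a neighborhood $U_{S;x_1,\dotsc,x_n;y_1,\dotsc,y_m;\epsilon}$, with $x_i\in c_0$ and $y_j\in\ell_1=(c_0)^*$. The plan is to produce, for a suitable $r$, a finite-rank operator $A\in\mathcal{L}(X_r)\cap\mathcal{L}_M(X)$ lying in a slightly smaller neighborhood of $S$, and then to invoke the hypothesis to get $T\in\Gamma(X)$ close to $A$ on $e_0,\dotsc,e_{r-1}$, both for $T$ and for $T^*$.

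\textbf{Step 1 (finite-dimensional compression).} Put $A_r=P_rSP_r$, viewed as an operator on $X$ that vanishes on $\ker P_r$ and has range in $X_r$. We have $\|P_r\|=1$ on $c_0$, and $P_r^*$ is the coordinate projection on $\ell_1$, also of norm $1$. Hence $\|A_r\|\le\|S\|\le M$, so $A_r\in\mathcal{L}(X_r)\cap\mathcal{L}_M(X)$.

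\textbf{Step 2 ($A_r\to S$ in SOT$^*$).} For $x\in c_0$ we have $P_rx\to x$ in sup norm, since $x_k\to0$. Then
\[
\|A_rx-Sx\|_\infty\le\|P_rS(P_rx-x)\|_\infty+\|P_rSx-Sx\|_\infty\le M\|P_rx-x\|_\infty+\|(I-P_r)Sx\|_\infty\to0 .
\]
Similarly $A_r^*=P_r^*S^*P_r^*$, and $P_r^*y\to y$ in $\ell_1$ for every $y\in\ell_1$, so $A_r^*y\to S^*y$. This is where $c_0$ is used: in $\ell_1$ the tails of a fixed vector go to zero, whereas on $\ell_\infty=(\ell_1)^*$ the SOT$^*$ argument would fail. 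Choose $r$ so large that $\|(A_r-S)x_i\|_\infty<\epsilon/3$ and $\|(A_r-S)^*y_j\|_1<\epsilon/3$ for all $i,j$. Also choose $r$ so that $\|x_i-P_rx_i\|_\infty<\delta$ and $\|y_j-P_r^*y_j\|_1<\delta$, where $\delta$ is fixed below.

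\textbf{Step 3 (transfer from basis vectors to the test vectors).} Set $A=A_r$ and apply the hypothesis with a small $\eta$ to obtain $T\in\Gamma(X)$ with
\[
\|(T-A)e_k\|_\infty<\eta,\qquad \|(T^*-A^*)e_k\|_1<\eta,\qquad k<r .
\]
Write $x_i=P_rx_i+(x_i-P_rx_i)$. The head contributes at most $\sum_{k<r}|x_{i,k}|\,\eta\le r\|x_i\|_\infty\eta$. The tail contributes at most $\|T-A\|\,\delta\le 2M\delta$, since $T,A\in\mathcal{L}_M(X)$. The same estimate works for $y_j$ in $\ell_1$, using $\|T^*-A^*\|\le 2M$. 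Taking $\delta<\epsilon/(12M)$ first, and then $\eta$ small relative to $r$ and to $\max_i\|x_i\|$, $\max_j\|y_j\|$, gives $\|(T-S)x_i\|<\epsilon$ and $\|(T-S)^*y_j\|<\epsilon$. Hence $T\in U$, and $\Gamma(X)$ is dense.

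The only point requiring care is Step 2, namely ensuring that the compression keeps the norm at most $M$ and converges in both the operator and the adjoint sense. This follows from the contractivity of $P_r$ and $P_r^*$ and from the fact that finitely supported vectors are dense in both $c_0$ and $\ell_1$. The order of choosing $\delta$, then $r$, then $\eta$ must be respected, since $\eta$ depends on $r$.
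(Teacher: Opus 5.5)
Your proof is correct. The compression $A_r=P_rSP_r$ is an admissible input: its norm is at most $M$ since $\|P_r\|=\|P_r^*\|=1$, and it converges to $S$ in SOT$^*$ because finitely supported vectors are dense in both $c_0$ and $\ell_1$. Your head/tail split, with $\delta$ chosen first, then $r$, then $\eta$, correctly transfers the estimates on $e_0,\dotsc,e_{r-1}$ to arbitrary test vectors.

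The paper states this lemma without proof, as an analogue of \cite[Lemma 2.6]{GMM21}. Your argument is, however, essentially the one the paper carries out inline in the density part of Theorem~\ref{Mul-DHC-residual}, where it takes $A=P_mT_0|_{E_m}$.
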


Grivaux et al.\@ proved in \cite[Corollary 2.12]{GMM21} that $G$-$\operatorname{MIX}_M(\ell_2)\cap \operatorname{CH}_M(\ell_2)$ is dense in $(\mathcal{L}_M(\ell_2),SOT^*)$.
We use the definition of mixing directly, without relying on $G$-$\operatorname{MIX}_M(X)$, to simplify the proof for determining the density of the collection of mixing operators.

\begin{prop}\label{MIX-dense-l^p}
Let $X=\ell_p$, $1<p<\infty$ and $M>1$. 
Then 
\[
\operatorname{MIX}_M(X):=\{T\in \mathcal{L}_M(X) \colon T \text{ is mixing}\}
\]
is dense in $(\mathcal{L}_M(X),SOT^*)$.
\end{prop}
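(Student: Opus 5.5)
We apply Lemma \ref{dense criteria-lp} (the SOT$^*$ part). Fix $r\geq 1$, $A\in\mathcal{L}(X_r)\cap\mathcal{L}_M(X)$, viewed as an operator on $X$ that vanishes on $\overline{\operatorname{span}}\{e_k\colon k\geq r\}$ and maps into $X_r$, and fix $\epsilon>0$. We must produce a mixing $T$ with $\|T\|\leq M$ and $\|(T-A)e_k\|_p<\epsilon$, $\|(T^*-A^*)e_k\|_q<\epsilon$ for $k<r$.

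First we gain room in the norm. Replace $A$ by $A'=(1-\delta)A$ with $\delta>0$ small, so that $\|A'\|\leq (1-\delta)M<M$ and $A'$ is within $\epsilon/2$ of $A$ on the finitely many test vectors, for both $A$ and $A^*$. Then we define
\[
T = A' \oplus B_w \quad\text{on}\quad X = X_r\oplus \overline{\operatorname{span}}\{e_k\colon k\geq r\},
\]
where $B_w$ is a unilateral weighted backward shift on the tail, plus a small coupling term $C$. A pure direct sum is not mixing, since $A'$ on $X_r$ is not, so we need the coupling. A natural choice is to let the tail shift feed into the head by setting $Te_r = A'e_r + \eta e_{r-1}$. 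More cleanly, we can take $T = A' + \eta\, S$, where $S$ is a backward shift with weights $w_n$ acting on all coordinates, chosen so that its restriction to the tail has large weights far out (say $w_n=M'$ for $n\geq N$, with $N$ large) and small weights $\eta$ near the head. The test conditions only involve $Te_k$ and $T^*e_k$ for $k<r$. Here $(T-A')e_k=\eta w_k e_{k-1}$ is small, and $T^*e_k=A'^*e_k+\eta w_{k+1}e_{k+1}$, which is small as long as $w_{r}$ is small, so both estimates hold.

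The main step is mixing. We verify the Kitai/Hypercyclicity-type mixing criterion: we need dense sets $D_1,D_2$ and maps $R_n$ with $T^nx\to0$ for $x\in D_1$, $R_ny\to0$, and $T^nR_ny\to y$ for $y\in D_2$, for the full sequence $n$. Since $T$ is a finite-dimensional block perturbed by a weighted shift, we expect the difficulty here. The first thing to try is the standard trick from \cite{GMM21}: choose $T$ upper-triangular with respect to the basis, that is $T = D + $ a backward shift, where the diagonal part on $X_r$ comes from $A'$, so that $T^n x\to 0$ fails unless the spectrum of $A'$ is controlled. To avoid this, we first approximate $A'$ in SOT$^*$ on $X_r$ by a matrix with all eigenvalues of modulus ${}<1$. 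Scaling by $(1-\delta)$ is not enough when $\|A\|$ is close to $M>1$, so instead we use a block extension: embed $X_r$ into $X_{N}$ and replace $A'$ by a finite block whose first $r$ columns and rows agree with $A'$ up to $\epsilon$ but which is nilpotent or has spectral radius ${}<1$. This is possible because an $r\times r$ matrix can be dilated to a larger matrix with small spectral radius and norm still at most $M$. One way is to note that a strict contraction has nilpotent dilations of norm at most $1$ (truncated Sz.-Nagy-type dilations), applied to $A'/M$ and rescaled. Then $T=$ (this nilpotent block on $X_N$) glued to a weighted backward shift $B_w$ with weights $w_n=\lambda\in(1,M)$ on the tail, connected by a weight $\lambda$ from $e_N$ into the block. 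The vectors in $c_{00}$ are eventually killed by $T$, since $T$ is nilpotent on each finite $X_m$ after entering the block. A right inverse $R$, namely the forward shift with weights $1/\lambda$ shifted beyond the block, gives $R^n y\to0$ and $T^nR^n y=y$ for $y\in c_{00}$ supported beyond $N$. We handle general $y\in c_{00}$ by solving $Tz=y$ on the block through the connecting weight. Kitai's criterion then yields mixing, and the norm bound $\|T\|\leq M$ comes from the disjointness of column and row supports in $\ell_p$. The hardest point, which we check last, is the norm-controlled nilpotent dilation in $\ell_p$ for $p\neq2$. If that fails, we fall back to $A'$ with spectral radius below $1$ obtained by a small perturbation followed by a renorming-free estimate.
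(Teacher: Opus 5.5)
There is a genuine gap, and it sits at the step you postpone to the end.

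\textbf{The nilpotent block is never constructed.} Your final operator needs a finite block $D$ on $X_N$ with all of the following properties:
\begin{enumerate}
\item[(i)] $D$ is nilpotent;
\item[(ii)] the connecting vector is cyclic for $D$, since otherwise your right inverse cannot reach all of $X_N$;
\item[(iii)] $\|D\|$ is small enough that the glued operator still has $\ell_p$-norm at most $M$;
\item[(iv)] the \emph{whole} columns $De_k$ and rows $D^*e_k$, $k<r$, are $\epsilon$-close to those of $A$.
\end{enumerate}
You assert such a $D$ exists but do not prove it, and the reason you give does not yield it. A truncated Sz.-Nagy or Sch\"affer type dilation $V$ of $C=A'/M$ only gives the compression identity $P_{X_r}V|_{X_r}=C$. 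SOT$^*$-closeness instead requires the off-diagonal parts $(I-P_{X_r})VP_{X_r}$ and $P_{X_r}V(I-P_{X_r})$ to be small. In such dilations these parts contain defect operators like $(I-C^*C)^{1/2}$, which are not small unless $C$ is almost isometric. Dilation theory is also a Hilbert space device: on $\ell_p$ with $p\neq 2$ you cannot isometrically rotate an auxiliary subspace onto $X_r$.

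\textbf{The fallback does not work either.} For small $\epsilon$, an $\epsilon$-perturbation of the $r\times r$ matrix $A'$ cannot move an eigenvalue of modulus $>1$ into the unit disc. Any spectral gain would have to come from a carefully designed coupling with the part of $D$ outside $X_r$, and that is exactly the construction that is missing.

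\textbf{Your first operator is not shown to be mixing.} This is the one with head $A'$, a single coupling $e_r\mapsto\eta e_{r-1}$, and a weighted shift on the tail. With only one coupling direction it need not even be transitive. For example, if $A'=cI_r$ with $r\ge 2$, the head component of $T^Nz$ equals $c^Nz_0$ modulo $\mathbb{K}e_{r-1}$.

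\textbf{The missing idea is that no spectral control of the head is needed.} The paper works in blocks of $\mathbb{K}^r$ and defines
\[
T(x_0,x_1,x_2,\dotsc)=(Ax_0+\delta x_1,\,wx_2,\,wx_3,\dotsc).
\]
Here $\|A\|<M$ is kept on the first block, the \emph{entire} second block is coupled into it by $\delta x_1$, and the remaining blocks are shifted with a constant weight $w$ satisfying $\max\{1,\|A\|\}<w<M$.
\begin{itemize}
\item Once $\|A\|^q+\delta^q\le M^q$, H\"older's inequality gives $\|T\|\le M$.
\item For $k<r$ one has $Te_k=Ae_k$ and $\|(T^*-A^*)e_k\|_q=\delta$, so the SOT$^*$ conditions hold.
\item For finitely supported $u,v$ one solves $T^Nz^{(N)}=v$ exactly. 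Keep $u$ in the first blocks, put $w^{-N}v_j$ in block $N+j$, and put the correction $\delta^{-1}w^{-(N-1)}\bigl(v_0-A^Nu_0-\delta\sum_{k=1}^sA^{N-k}w^{k-1}u_k\bigr)$ in block $N$.
\item Because $\|A\|<w$ and $w>1$, this correction tends to $0$. Hence $z^{(N)}\to u$ and $T$ is mixing.
\end{itemize}
The growth of $A^N$ is simply beaten by $w^N$. The requirement $T^nx\to 0$ on a dense set, which pushed you toward nilpotent approximations, is never needed.
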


\begin{proof}
Fix $r\geq 1$. 
Set $X_r=\operatorname{span}\{e_0,\dotsc,e_{r-1}\}$, and let $A\in \mathcal{L}(X_r)\cap \mathcal{L}_M(X)$. 
Without loss of generality, we can assume that $\|A\|<M$. 
Let $\epsilon>0$.
By Lemma \ref{dense criteria-lp}, it is enough to construct $T\in \operatorname{MIX}_M(X)$ such that $T|_{X_r}=A$ and $\|(T^*-A^*)e_k\|_q<\epsilon$ for $k=0,\dotsc,r-1$.

We identify $\ell_p$ with the block sequence space
    \[
        \ell_p(\mathbb{N}_0;\mathbb{K}^r)=\biggl\{x=(x_0,x_1,x_2,\dotsc) \colon x_j\in \mathbb{K}^r,\sum_{j\geq 0}\|x_j\|_p^p<\infty
        \biggr\}.
    \]
    The first block $\mathbb{K}^r$ is identified with $X_r$. 
    Let $q=\frac{p}{p-1}$. 
    Choose $w>0$ with $\max(1,\|A\|)<w<M$.
    Since $\|A\|<M$, choose $\delta>0$ so small that $\delta<\epsilon$ and $\|A\|^q+\delta^q<M^q$.
    Next we define an operator $T=B_{A,w}\in \mathcal{L}(\ell_p)$ by 
    \[
        B_{A,w}(x_0,x_1,x_2,\dotsc)=(Ax_0+\delta x_1,wx_2,wx_3,\dotsc).
    \]
    Then $\|B_{A,w}\|\leq M$.
    Indeed, for each $x=(x_j)_{j\geq 0}\in X$, by H\"older's inequality,
    \begin{align*}
        \|Ax_0+\delta x_1\|_p&\leq \|A\|\cdot \|x_0\|_p+\delta \|x_1\|_p \\
        &\leq (\|A\|^q+\delta^q)^{\frac{1}{q}}(\|x_0\|_p^p+\|x_1\|_p^p)^{\frac{1}{p}} 
        \leq M(\|x_0\|_p^p+\|x_1\|_p^p)^{\frac{1}{p}}.
    \end{align*}
    Hence 
    \begin{align*}
        \|B_{A,w}x\|_p^p&=\|Ax_0+\delta x_1\|_p^p +\sum_{j\geq 1}\|wx_{j+1}\|_p^p \\
        &\leq M^p(\|x_0\|_p^p+\|x_1\|_p^p)+w^p\sum_{j\geq 1}\|x_{j+1}\|_p^p 
        \leq M^p \sum_{j\geq 0}\|x_j\|_p^p.
    \end{align*}
    Thus $\|B_{A,w}\|\leq M$, so $B_{A,w}\in \mathcal{L}_M(\ell_p)$. 
    By the construction of $B_{A,w}$, its restriction to the first block is precisely $A$.
    Moreover, we may get $\|(B_{A,w}-A)^*e_k\|<\epsilon$ for $0\leq k\leq r-1$.

    We next show that $B_{A,w}$ is mixing. 
    Let $U$ and $V$ be non-empty open sets of $X$.
    Since the set of finitely supported block vectors is dense in $\ell_p(\mathbb{N}_0;\mathbb{K}^r)$, we may choose $u=(u_0,\dotsc,u_s,0,0,\dotsc)\in U$ and $v=(v_0,\dotsc,v_s,0,0,\dotsc)\in V$. 
    For each $N>s$, define a vector $z^{(N)}=(z^{(N)}_0,z^{(N)}_1,\dotsc)\in \ell_p$ as follows
    \[
        z^{(N)}_k=u_k\ (0\leq k\leq s), \quad  
        z^{(N)}_k=0\ (s<k<N),
    \]
    \[
        z^{(N)}_N=\delta^{-1}w^{-(N-1)}\Bigl(v_0-A^Nu_0-\delta\sum_{k=1}^sA^{N-k}w^{k-1}u_k\Bigr),
    \]
    \[
         z^{(N)}_{N+j}=w^{-N}v_j\ (1\leq j \leq s), \quad \text{all remaining blocks equal to } 0.
    \]
    We claim that $B_{A,w}^N z^{(N)}=v$. 
    For each $1\leq j \leq s$, $(B_{A,w}^Nz^{(N)})_j=w^N z^{(N)}_{N+j}=v_j$ and 
    \begin{align*}
        (B_{A,w}^Nz^{(N)})_0 &=A^N z^{(N)}_0+\delta \sum_{k=1}^N A^{N-k} w^{k-1} z^{(N)}_k \\
        &=A^N u_0+\delta \sum_{k=1}^sA^{N-k}w^{k-1} u_k+ \delta w^{N-1} z^{(N)}_N=v_0. 
    \end{align*}
    Thus $B_{A,w}^N z^{(N)}=v$. 
    Next we prove that $z^{(N)}\to u$ in $X$. 
    Since $z^{(N)}_k=u_k (0\leq k\leq s)$, it is enough to show that all additional tail blocks tend to $0$. 
    For the block $z^{(N)}_N$, 
    \[
        \|z^{(N)}_N\|_p \leq \delta^{-1}w^{-(N-1)}\|v_0\|_p+\delta^{-1}w\Bigl(\frac{\|A\|}{w}\Bigr)^N\|u_0\|_p +\sum_{k=1}^s \Bigl(\frac{\|A\|}{w}\Bigr)^{N-k}\|u_k\|_p.
    \]
    Since $w>1$ and $\|A\|<w$, $\|z^{(N)}_N\|_p\to 0$ as $N\to \infty$. 
    For $1\leq j \leq s$, 
    \[
        \|z^{(N)}_{N+j}\|_p=w^{-N}\|v_j\|_p \to 0 \quad (N\to \infty).
    \]
    Therefore $z^{(N)}\to u$ in $X$. 
    For each sufficiently large $N$, we have that $z^{(N)}\in U$, while $B_{A,w}^N z^{(N)}=v\in V$. 
    Hence $B_{A,w}^N (U)\cap V\neq \emptyset$ for all sufficiently large $N$. 
    This proves that $B_{A,w}$ is mixing. 
    Therefore $\operatorname{MIX}_M(X)$ is dense in $(\mathcal{L}_M(X),SOT^*)$.
\end{proof}

However, density alone should not be taken as evidence that mixing is typical.
As we will see below, the class of mixing operators remains meager.
Grivaux et al.\@ proved in \cite[Proposition 2.16]{GMM21} that $\operatorname{WMIX}_M(\ell_2)$ is dense in $(\mathcal{L}_M(\ell_2),SOT^*)$.
We obtain a similar result for the spaces  $X=\ell_p$, $1<p<\infty$ and $X=c_0$.

\begin{prop}\label{WMIX-dense-l^p}
    Let $X=\ell_p$, $1<p<\infty$ and $M>1$. Then 
    \[
        \operatorname{WMIX}_M(X):=\{T\in \mathcal{L}_M(X) \colon T \text{ is weakly mixing}\}
    \]
    is residual in $(\mathcal{L}_M(X),SOT^*)$.
\end{prop}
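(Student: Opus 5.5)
Since $\operatorname{MIX}_M(X)\subset\operatorname{WMIX}_M(X)$ and $\operatorname{MIX}_M(X)$ is SOT$^*$-dense in $\mathcal{L}_M(X)$ by Proposition \ref{MIX-dense-l^p}, the density of $\operatorname{WMIX}_M(X)$ is already available. It therefore remains to show that $\operatorname{WMIX}_M(X)$ is a $G_\delta$ subset of $(\mathcal{L}_M(X),SOT^*)$. Residuality then follows because $(\mathcal{L}_M(X),SOT^*)$ is Polish, hence Baire.

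For the $G_\delta$ part, fix a countable base $(W_i)_{i\ge 1}$ of non-empty open subsets of $X=\ell_p$; for instance, open balls with rational radii centred at finitely supported vectors with rational (or rational complex) coordinates. $T$ is weakly mixing if and only if $T\oplus T$ is topologically transitive on $X\times X$. Products $W_i\times W_j$ form a base of $X\times X$, so
\[
\operatorname{WMIX}_M(X)=\bigcap_{i,j,k,l\ge 1}\ \bigcup_{n\ge 0}\ \mathcal{O}_{i,j,k,l,n},
\]
where
\[
\mathcal{O}_{i,j,k,l,n}=\{T\in\mathcal{L}_M(X)\colon T^n(W_i)\cap W_k\neq\emptyset \text{ and } T^n(W_j)\cap W_l\neq\emptyset\}.
\]
So the key step is to show that each set $\{T\in\mathcal{L}_M(X)\colon T^n(U)\cap V\neq\emptyset\}$, with $U,V$ open, is SOT-open in $\mathcal{L}_M(X)$; it is then a fortiori SOT$^*$-open. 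Take $T_0$ with $T_0^n u\in V$ for some $u\in U$. The map $T\mapsto T^n u$ is SOT-continuous on the bounded set $\mathcal{L}_M(X)$. This is because multiplication is jointly SOT-continuous on bounded sets: by induction, $T^n u - T_0^n u = T(T^{n-1}u-T_0^{n-1}u) + (T-T_0)T_0^{n-1}u$, and the first term is at most $M\|T^{n-1}u-T_0^{n-1}u\|$. Hence $T^n u\in V$ for all $T$ in an SOT-neighbourhood of $T_0$ in $\mathcal{L}_M(X)$, and $\mathcal{O}_{i,j,k,l,n}$ is open. Intersecting the two conditions keeps it open.

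The only point requiring care is this joint-continuity step, which uses the norm bound $M$ essentially; on all of $\mathcal{L}(X)$ the map would fail to be continuous. The rest is bookkeeping: the set is a countable intersection of countable unions of open sets, hence $G_\delta$, and it is dense by Proposition \ref{MIX-dense-l^p}. I would also remark that the hypothesis $1<p<\infty$ enters only through the density result and through the Polishness of the SOT$^*$ ball, which requires $X^*$ to be separable.
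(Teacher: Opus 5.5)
Your proposal is correct and follows essentially the same route as the paper. Density comes from the SOT$^*$-density of mixing operators (Proposition \ref{MIX-dense-l^p}), and the $G_\delta$ property comes from the SOT-openness of $\{T\in\mathcal{L}_M(X)\colon T^n(U)\cap V\neq\emptyset\}$, via the same telescoping estimate using the bound $\|T\|\leq M$. The only difference is cosmetic: you write the $G_\delta$ decomposition for $T\oplus T$ directly using product basic sets, whereas the paper first shows $\operatorname{HC}_M$ is $G_\delta$ and pulls it back along the SOT$^*$-continuous map $T\mapsto T\oplus T$.
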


\begin{proof}
    We first claim that $\operatorname{HC}_M(X)$ is a $G_\delta$ subset of $(\mathcal{L}_M(X),SOT^*)$.
    Choose a countable basis of non-empty open sets $(U_m)_{m\geq 1}$ of $X$. 
    Thus 
    \[
        \operatorname{HC}_M(X)=\bigcap_{s,t\geq 1}\bigcup_{n\geq 1}\{T\in \mathcal{L}_M(X) \colon  T^n(U_s)\cap U_t \neq \emptyset \}.
    \]
    Let $D_{n,s,t}=\{T\in \mathcal{L}_M(X) \colon  T^n(U_s)\cap U_t \neq \emptyset \}$. Our goal is to show that $D_{n,s,t}$ is SOT-open. Then $D_{n,s,t}$ is SOT$^*$-open. 
    
    Fix $T\in D_{n,s,t}$. 
    We need to find an SOT-neighborhood $U$ of $T$ such that $U\subset D_{n,s,t}$. 
    There exists $x\in U_s$ such that $T^nx\in U_t$. 
    Since $U_t$ is an open set, there exists $\epsilon>0$ such that $B(T^nx,\epsilon)\subset U_t$. 
    We observe that 
    \[
        \|S^nx-T^nx\|\leq \sum_{k=0}^{n-1}\|S^{n-1-k}\|\cdot \|(S-T)T^k x\|.
    \]
    Choose $U=\{T;y_0,\dotsc,y_{n-1};\delta\}$, where $y_k=T^kx$ for $k=0,\dotsc,n-1$, and $\delta=\frac{\epsilon}{nM^{n-1}}$. 
    Suppose $S\in U$. 
    Since $\|S\|\leq M$, 
    \[
         \|S^nx-T^nx\| \leq \sum_{k=0}^{n-1} M^{n-1-k} \|(S-T)y_k\|< \sum_{k=0}^{n-1} M^{n-1-k}\delta \leq nM^{n-1}\delta<\epsilon.
    \]
    Thus $S^nx\in U_t$. 
    Then $S\in D_{n,s,t}$. 
    So $D_{n,s,t}$ is an SOT-open set. 
    Therefore $\operatorname{HC}_M(X)$ is a $G_\delta$ subset of $\mathcal{L}_M(X)$. 
    Since $T$ is weakly mixing if and only if $T\oplus T$ is hypercyclic, and $T\to T\oplus T$ is SOT$^*$-continuous, $\operatorname{WMIX}_M(X)$ is a $G_\delta$ subset of $(\mathcal{L}_M(X),SOT^*)$.

    Next we show that 
    $\operatorname{WMIX}_M(X)$ is dense in $(\mathcal{L}_M(X),SOT^*)$. 
    By Proposition \ref{MIX-dense-l^p}, we have that $\operatorname{MIX}_M(X):=\{T\in \mathcal{L}_M(X) \colon T \text{ is mixing}\}$ is dense in $(\mathcal{L}_M(X),SOT^*)$.
    Since $\operatorname{MIX}_M(X)\subset \operatorname{WMIX}_M(X)$, $\operatorname{WMIX}_M(X)$ is dense in $(\mathcal{L}_M(X),SOT^*)$.
    Therefore $\operatorname{WMIX}_M(X)$ is residual in $(\mathcal{L}_M(X),SOT^*)$.
\end{proof}

We now consider another strengthening of topological transitivity based on bounded gaps.
In what follows, we show that in $\mathcal{L}_M(X)$, the set of topologically ergodic operators is meager but forms a Borel set.

\begin{thm}\label{syndetic-meager-l^p}
    Let $X=\ell_p$, $1<p<\infty$ and $M>1$. 
    Then 
    \[
    \operatorname{TERG}_M(X):=\{T\in \mathcal{L}_M(X) \colon  T \text{ is topologically ergodic}\}
    \] 
    is a Borel subset of $(\mathcal{L}_M(X), SOT^*)$, but it is meager in $(\mathcal{L}_M(X), SOT^*)$.
\end{thm}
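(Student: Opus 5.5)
We prove the two assertions separately: the Borel part by writing $\operatorname{TERG}_M(X)$ through countably many Borel conditions, and the meagerness by placing it inside the complement of a dense $G_\delta$ set via Theorem \ref{T-TERG-T*-not-hyper}.

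\emph{Borel complexity.} Fix a countable basis $(U_m)_{m\geq 1}$ of non-empty open subsets of $X$. Then
\[
\operatorname{TERG}_M(X)=\bigcap_{s,t\geq 1}\ \bigcup_{L\geq 1}\ \bigcap_{j\geq 0}\ \bigcup_{n=j}^{j+L} D_{n,s,t},
\]
where $D_{n,s,t}=\{T\in\mathcal{L}_M(X)\colon T^n(U_s)\cap U_t\neq\emptyset\}$. The expression in the inner brackets says that $N_T(U_s,U_t)$ meets every window $[j,j+L]$, i.e.\ has gaps bounded by $L$. It therefore describes exactly the syndeticity of $N_T(U_s,U_t)$ for all basic pairs. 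Since every pair of non-empty open sets contains a basic pair, and return sets only grow with the sets, this is equivalent to topological ergodicity. In the proof of Proposition \ref{WMIX-dense-l^p} it was shown that each $D_{n,s,t}$ is SOT-open, hence SOT$^*$-open. Consequently $\operatorname{TERG}_M(X)$ is a $G_{\delta\sigma\delta}$ set, in particular Borel.

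\emph{Meagerness.} By Theorem \ref{T-TERG-T*-not-hyper}, if $T$ is topologically ergodic then $T^*$ is not hypercyclic. Hence
\[
\operatorname{TERG}_M(X)\subset \mathcal{L}_M(X)\setminus\{T\colon T^* \text{ is hypercyclic}\}.
\]
It thus suffices to show that $\mathcal{G}:=\{T\in\mathcal{L}_M(X)\colon T^*\in \operatorname{HC}(X^*)\}$ is a dense $G_\delta$ subset of $(\mathcal{L}_M(X),SOT^*)$. By the Remark following Theorem \ref{main-result-1}, the map $\Phi(T)=T^*$ is a homeomorphism from $(\mathcal{L}_M(\ell_p),SOT^*)$ onto $(\mathcal{L}_M(\ell_q),SOT^*)$, where $1<q<\infty$. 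Therefore $\mathcal{G}=\Phi^{-1}(\operatorname{HC}_M(\ell_q))$. The proof of Proposition \ref{WMIX-dense-l^p} shows that $\operatorname{HC}_M(\ell_q)$ is a $G_\delta$ subset of $(\mathcal{L}_M(\ell_q),SOT^*)$. It is also dense there, since it contains $\operatorname{WMIX}_M(\ell_q)$, which is residual by Proposition \ref{WMIX-dense-l^p} applied with exponent $q$. So $\operatorname{HC}_M(\ell_q)$ is a dense $G_\delta$ set, and so is its preimage $\mathcal{G}$ under the homeomorphism $\Phi$. Its complement, which contains $\operatorname{TERG}_M(X)$, is then meager.

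\emph{Main obstacle.} The step needing the most care is the identification of $X^*$ with $\ell_q$ and of SOT$^*$ on both balls, so that $\Phi$ really is a homeomorphism of the operator balls. Here we use reflexivity and $\|T^*\|=\|T\|$, and we rely on the Remark as stated. One should also check that density transfers correctly: Proposition \ref{WMIX-dense-l^p} holds for every $1<p<\infty$, so it applies to $\ell_q$. Everything else is bookkeeping with the sets $D_{n,s,t}$.
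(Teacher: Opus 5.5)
Your proposal is correct and follows the paper's proof essentially step for step. For the Borel part you use the same $\bigcap\bigcup\bigcap\bigcup$ description of $\operatorname{TERG}_M(X)$ through the SOT-open sets $D_{n,s,t}$. For meagerness you use the same inclusion from Theorem \ref{T-TERG-T*-not-hyper}, combined with the adjoint homeomorphism $\Phi$ and Proposition \ref{WMIX-dense-l^p} applied on $\ell_q$. Your reduction to basic pairs and the explicit $G_{\delta\sigma\delta}$ classification are correct details that the paper leaves implicit.
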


\begin{proof}
    Since $X$ is separable, choose a countable basis of non-empty open subsets $(U_m)_{m\geq 1}$ of $X$.
    Recall that $T$ is syndetically transitive if and only if for any non-empty open sets $U$ and $V$ of $X$, there exists $L\geq 1$ such that for any $m\geq 0$, we can find $k\in \{m,\dotsc,m+L\}$ such that $T^k(U)\cap V\neq \emptyset$.
    For $r,s\geq 1$ and $k\geq 0$, define 
    \[
        F_{k,r,s}=\{T\in \mathcal{L}_M(X) \colon T^k(U_r)\cap U_s\neq \emptyset\}.
    \]
    It follows that 
    \[
        \operatorname{TERG}_M(X)=\bigcap_{r,s\geq 1}\bigcup_{L\geq 1}\bigcap_{m\geq 0}\bigcup_{k=m}^{m+L}F_{k,r,s}.
    \]
    Since $F_{k,r,s}$ is SOT-open, it is also SOT$^*$-open in $\mathcal{L}_M(X)$.
    Hence $\operatorname{TERG}_M(X)$ is a Borel subset of $(\mathcal{L}_M(X), SOT^*)$.
    
    It remains to show that $\operatorname{TERG}_M(X)$ is meager.
    By Theorem \ref{T-TERG-T*-not-hyper},
     \[
        \operatorname{TERG}_M(X) \subset \mathcal{L}_M(X)\setminus \{T \colon T^* \text{ is hypercyclic on }\ell_q\}.
    \]
    Since $X=\ell_p$ with $1<p<\infty$, we have $X^*=\ell_q$. 
    By Proposition \ref{WMIX-dense-l^p}, we have that $\{T\in \mathcal{L}_M(\ell_q) \colon T \text{ is hypercyclic on }\ell_q\}$ is a dense $G_\delta$ subset of $\mathcal{L}_M(\ell_q)$.
    Now consider the adjoint map
    \[
    \Phi \colon \mathcal{L}_M(X)\to \mathcal{L}_M(X^*), \quad \Phi(T)=T^*.
    \]
    Since $\Phi$ is a homeomorphism when both operator balls are endowed with SOT$^*$, the inverse image 
    \[
       \{T\in\mathcal{L}_M(X) \colon T^*\text{ is hypercyclic on }\ell_q\} 
    \]
    is a dense $G_\delta$ subset.
    Therefore $\mathcal{L}_M(X)\setminus \{T \colon T^* \text{ is hypercyclic on }\ell_q\}$ is meager.
    Consequently, $\operatorname{TERG}_M(X)$ is meager in $(\mathcal{L}_M(X), SOT^*)$.
\end{proof}

\begin{remark} 
    If $T$ is topologically ergodic, then $T$ is thickly syndetic transitive.
    Equivalently, we have that if $T^*$ is hypercyclic, then $T$ is not thickly syndetic transitive. 
\end{remark}

If $T$ is mildly mixing, then by definition $T$ is weakly disjoint from every hypercyclic operator.
In particular, $T$ is weakly disjoint from all weakly mixing operators.
By Theorem \ref{treg-wd-weak-mixing}, $T$ is topologically ergodic.
From this we obtain the following corollary.

\begin{coro}
    Let $X=\ell_p$, $1<p<\infty$ and $M>1$. Then 
    \[
        \operatorname{MMIX}_M(X):=\{T\in \mathcal{L}_M(X) \colon  T \text{ is mildly mixing}\}
    \]
    is meager in $(\mathcal{L}_M(X), SOT^*)$.
\end{coro}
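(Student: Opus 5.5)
The plan is to reduce the corollary to Theorem \ref{syndetic-meager-l^p} by showing that $\operatorname{MMIX}_M(X)\subset \operatorname{TERG}_M(X)$. Since a subset of a meager set is meager, this inclusion finishes the proof. No Borel or $G_\delta$ description of $\operatorname{MMIX}_M(X)$ is needed, because meagerness passes to arbitrary subsets.

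To obtain the inclusion, I fix $T\in\operatorname{MMIX}_M(X)$. By definition $T$ is weakly disjoint from every hypercyclic operator $S$ on any Banach space $Y$, meaning that $T\oplus S$ is hypercyclic. Every weakly mixing operator is hypercyclic, since $\operatorname{WMIX}(Y)\subset\operatorname{HC}(Y)$. Hence $T$ is weakly disjoint from all weakly mixing operators, which is assertion (2) of Theorem \ref{treg-wd-weak-mixing}. Applying the implication (2)$\Rightarrow$(1) of that theorem gives that $T$ is topologically ergodic, so $T\in\operatorname{TERG}_M(X)$.

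Theorem \ref{syndetic-meager-l^p} states that $\operatorname{TERG}_M(X)$ is meager in $(\mathcal{L}_M(X),SOT^*)$ for $X=\ell_p$ with $1<p<\infty$ and $M>1$, and the inclusion then yields the claim. The only point that needs care is that Cardeccia's theorem must be applicable with the class of spaces $Y$ that the definition of mild mixing quantifies over. Here we need $Y$ to range at least over separable Banach spaces carrying weakly mixing operators. I expect this to be harmless: the definition allows arbitrary Banach spaces $Y$, so it covers whatever class Theorem \ref{treg-wd-weak-mixing} uses, and there is no genuine obstacle.
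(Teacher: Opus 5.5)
Your proposal is correct and matches the paper's argument: mild mixing gives weak disjointness from all weakly mixing operators, so Theorem~\ref{treg-wd-weak-mixing} puts $\operatorname{MMIX}_M(X)$ inside $\operatorname{TERG}_M(X)$. That set is meager by Theorem~\ref{syndetic-meager-l^p}, and subsets of meager sets are meager.
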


\begin{coro}\label{MIX-meager-l^p}
    Let $X=\ell_p$, $1<p<\infty$ and $M>1$. Then 
    \[
        \operatorname{MIX}_M(X):=\{T\in \mathcal{L}_M(X) \colon  T \text{ is mixing}\}
    \]
    is meager in $(\mathcal{L}_M(X), SOT^*)$.
\end{coro}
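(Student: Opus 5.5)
Since every mixing operator is topologically ergodic, we have the inclusion $\operatorname{MIX}_M(X)\subset \operatorname{TERG}_M(X)$. The corollary then follows from Theorem \ref{syndetic-meager-l^p}, because a subset of a meager set is meager. Equivalently, one can factor through mild mixing and use the previous corollary: mixing implies mildly mixing, and $\operatorname{MMIX}_M(X)$ is meager.

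The only point to verify is the inclusion. If $T$ is mixing, then for any non-empty open $U,V\subset X$ the set $N_T(U,V)$ is cofinite. A cofinite subset of $\mathbb{N}_0$ contains all $n\ge n_0$, so its gaps are bounded by $n_0+1$, and it is syndetic. Hence $T$ is topologically ergodic.

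Alternatively, for the route through $\operatorname{MMIX}_M(X)$: a mixing operator is weakly disjoint from every hypercyclic operator, since $N_{T\oplus S}(U_1\times U_2,V_1\times V_2)=N_T(U_1,V_1)\cap N_S(U_2,V_2)$ is a cofinite set intersected with an infinite set, which is nonempty. (Recall that $N_S$ is infinite for hypercyclic $S$ on an infinite-dimensional space.) The first route is simpler and avoids this, so I would use it.

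There is no real obstacle. Both the Borel measurability and the meagerness were already established in Theorem \ref{syndetic-meager-l^p}, and the Baire space here is $(\mathcal{L}_M(X),SOT^*)$, which is Polish.
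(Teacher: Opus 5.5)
Your proposal is correct and is essentially the paper's argument. The paper states this corollary without a separate proof and derives it from Theorem~\ref{syndetic-meager-l^p} through the inclusion of mixing operators among the topologically ergodic ones, which you justify correctly by noting that cofinite return sets are syndetic (your direct cofinite $\Rightarrow$ syndetic step is more elementary than going through mild mixing, which in the paper relies on Cardeccia's Theorem~\ref{treg-wd-weak-mixing}).
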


It follows from Proposition \ref{T-reit-T-terg} that if $T$ is reiteratively hypercyclic, then $T$ is topologically ergodic, so we can also deduce the following.

\begin{coro}\label{reit-hyper-meager-l^p}
    Let $X=\ell_p$, $1<p<\infty$ and $M>1$. Then 
    \[
        \operatorname{REIT}_M(X):=\{T\in \mathcal{L}_M(X) \colon  T \text{ is reiteratively hypercyclic}\}
    \]
    is meager in $(\mathcal{L}_M(X), SOT^*)$.
\end{coro}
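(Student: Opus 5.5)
The statement to prove is Corollary \ref{reit-hyper-meager-l^p}: $\operatorname{REIT}_M(X)$ is meager. The plan is to obtain it directly from Theorem \ref{syndetic-meager-l^p} via the inclusion $\operatorname{REIT}_M(X)\subset \operatorname{TERG}_M(X)$, since any subset of a meager set is meager.

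First, let $T\in \operatorname{REIT}_M(X)$. Then $T$ is a reiteratively hypercyclic operator on the separable Banach space $X=\ell_p$. In particular $X$ is a separable $F$-space, so Proposition \ref{T-reit-T-terg} applies and shows that $T$ is topologically ergodic. Since $\|T\|\leq M$, we get $T\in \operatorname{TERG}_M(X)$, which gives the inclusion $\operatorname{REIT}_M(X)\subset \operatorname{TERG}_M(X)$.

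Second, Theorem \ref{syndetic-meager-l^p} says that $\operatorname{TERG}_M(X)$ is meager in $(\mathcal{L}_M(X),SOT^*)$. That is, it is contained in a countable union of nowhere dense sets. The same countable union also covers $\operatorname{REIT}_M(X)$, so $\operatorname{REIT}_M(X)$ is meager.

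There is no real obstacle. The only point to check is that the hypotheses of Proposition \ref{T-reit-T-terg} (a separable $F$-space and a bounded operator) hold in our setting, and they clearly do. Borel measurability of $\operatorname{REIT}_M(X)$ is not needed, because meagerness passes to arbitrary subsets.

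An alternative route, bypassing topological ergodicity, would use Theorem \ref{T-TERG-T*-not-hyper}. Reiteratively hypercyclic operators are topologically ergodic, so their adjoints are not hypercyclic. This places $\operatorname{REIT}_M(X)$ inside the complement of the dense $G_\delta$ set $\Phi^{-1}(\operatorname{HC}_M(\ell_q))$ from the proof of Theorem \ref{syndetic-meager-l^p}. However, this is the same argument in different words.
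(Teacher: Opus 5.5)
Your proposal is correct and is essentially the paper's argument: it derives the corollary from the inclusion $\operatorname{REIT}_M(X)\subset\operatorname{TERG}_M(X)$ given by Proposition~\ref{T-reit-T-terg}, combined with the meagerness of $\operatorname{TERG}_M(X)$ from Theorem~\ref{syndetic-meager-l^p}. You also correctly note that no measurability of $\operatorname{REIT}_M(X)$ is needed, since subsets of meager sets are meager.
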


Having described the typical behavior of these dynamical classes, we shall further specify the problem by fixing an initial vector. 
More precisely, for a fixed non-zero vector $x\in X$, we shall investigate whether $x$ is a hypercyclic vector for a typical operator in the classes considered above.

\begin{prop}\label{fixed-vector-MIX-dense-l^p}
Let $X=\ell_p$, $1<p<\infty$ and $M>1$. 
Then for every non-zero vector $x$ in $X$,
\[
\operatorname{MIX}_M(X,x):=\{T\in \mathcal{L}_M(X) \colon T \text{ is mixing and }x \text{ is a hypercyclic vector}\}
\]
is SOT$^*$-dense in $\mathcal{L}_M(X)$.
\end{prop}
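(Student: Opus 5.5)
We reduce to the mixing-operator construction of Proposition \ref{MIX-dense-l^p} by a change of coordinates that moves $x$ to a convenient position. Fix $x\neq 0$, a basic SOT$^*$-open set $\mathcal{U}$ in $\mathcal{L}_M(X)$, and, via Lemma \ref{dense criteria-lp}, an $r\geq 1$, an operator $A\in\mathcal{L}(X_r)\cap\mathcal{L}_M(X)$ with $\|A\|<M$, and $\epsilon>0$. We need $T\in\mathcal{L}_M(X)$ that is mixing, has $x$ as a hypercyclic vector, and satisfies $\|(T-A)e_k\|_p<\epsilon$ and $\|(T^*-A^*)e_k\|_q<\epsilon$ for $k<r$.

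The key observation is that the operator $B_{A,w}$ built in Proposition \ref{MIX-dense-l^p} is mixing, so its hypercyclic vectors form a dense $G_\delta$ set. We want to arrange that $x$ itself is one of them, while keeping the approximation on the first $r$ coordinates. The natural approach is to pick a hypercyclic vector $y$ for $B=B_{A,w}$ with $y$ very close to $x$. We then conjugate by an invertible operator $J$ close to the identity in norm with $Jy=x$, for instance a rank-one perturbation $J=I+\phi\otimes(x-y)$ with $\phi\in X^*$, $\phi(y)=1$ and $\|\phi\|\le 2/\|y\|$. Put $T=JBJ^{-1}$.

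Then $T$ is mixing, since mixing is preserved under similarity, and $T^nx=JB^ny$ is dense, so $x\in\operatorname{HC}(T)$. As $\|J-I\|$ and $\|J^{-1}-I\|$ are $O(\|x-y\|)$, we have $\|T-B\|\to0$ in norm when $y\to x$. This gives the required approximations on $e_k$ and on $e_k^*$ for $k<r$, because $\|T^*-B^*\|=\|T-B\|$.

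The remaining issue is the norm constraint $\|T\|\le M$. We ensure this by building slack into $B$. Choose $w<M' <M$ with $\|A\|^q+\delta^q<M'^q$, so that the construction of Proposition \ref{MIX-dense-l^p} yields $\|B\|\le M'<M$. Then for $y$ close enough to $x$ we get $\|T\|\le M'+\|T-B\|\le M$.

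The main point to check is that a hypercyclic vector $y$ for $B$ can be chosen arbitrarily close to $x$. This holds because $\operatorname{HC}(B)$ is dense by the Birkhoff transitivity theorem. The parameters are chosen in order: $\delta$ and $w$, then $B$, then $y$, then $J$.
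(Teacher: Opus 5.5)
Your proposal is correct and follows essentially the paper's argument: take a mixing operator of norm strictly below $M$ approximating the target, choose a hypercyclic vector close to $x$, and conjugate by a rank-one perturbation of the identity that moves it onto $x$, so that the conjugate is norm-close and still mixing. The only difference is where the norm slack comes from: you build it directly into $B_{A,w}$ through an intermediate bound $M'<M$, whereas the paper first scales $T_0$ by some $c<1$ to show that mixing operators of norm $<M$ are SOT$^*$-dense.
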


\begin{proof}
    We first claim that
    \[
    \operatorname{MIX}_{<M}(X)=\left\{T\in\mathcal{L}(X) \colon \|T\|<M  \text{ and }T\text{ is mixing}
    \right\}
    \]
    is SOT$^*$-dense in $\mathcal{L}_M(X)$. 
    Indeed, let $U$ be a non-empty SOT$^*$-open subset of $\mathcal{L}_M(X)$, and choose $T_0\in U$. 
    There exist vectors $u_1,\dotsc,u_s\in X$, functionals $f_1,\dotsc,f_t\in X^*$, and $\epsilon>0$ such that 
    \[
        V=\left\{ T\in\mathcal{L}_M(X) \colon \|(T-T_0)u_i\|<\epsilon,\  \|(T^*-T_0^*)f_j\|<\epsilon,\ 1\leq i \leq s,\ 1\leq j\leq t
        \right\}
    \]
    satisfies $T_0\in V\subset U$. 
    Choose $c\in(0,1)$ sufficiently close to $1$ so that
    \[
        (1-c)\|T_0u_i\|<\frac{\epsilon}{2},\ 1\leq i \leq s , \quad \text{ and } \quad (1-c)\|T_0^*f_j\|<\frac{\epsilon}{2},\mathbb{} 1\leq j\leq t.
    \]
    Set $T_1=cT_0$. Then $\|T_1\|\leq c\|T_0\| \leq cM<M$.
    Choose $r$ such that $\max\{1,\|T_1\|\}<r<M$. 
    Consider the non-empty SOT$^*$-open subset of $\mathcal{L}_r(X)$ given by 
    \[
        W=\biggl\{T\in\mathcal{L}_r(X) \colon \|(T-T_1)u_i\|<\frac{\epsilon}{2},\ \|(T^*-T_1^*)f_j\|<\frac{\epsilon}{2},\ 1\leq i \leq s,\ 1\leq j\leq t \biggr\}.
    \]
    By Proposition \ref{MIX-dense-l^p}, $\operatorname{MIX}_r(X)$ is SOT$^*$-dense in $\mathcal{L}_r(X)$.
    So there exists $R\in W\cap \operatorname{MIX}_r(X)$.
    Hence $R$ is mixing and $\|R\|\leq r<M$. 
    Moreover, for each $1\leq i \leq s$, 
    \[
        \|(R-T_0)u_i\|\leq \|(R-T_1)u_i\|+\|(T_1-T_0)u_i\|<\frac{\epsilon}{2}+\frac{\epsilon}{2}=\epsilon.
    \]
    And similarly, for each $1\leq j\leq t$, $ \|(R^*-T_0^*)f_j\|<\epsilon$. 
    Hence $R\in V\subset U$. 
    This proves that $\operatorname{MIX}_{<M}(X)$ is SOT$^*$-dense in $\mathcal{L}_M(X)$.

    We now prove that $\operatorname{MIX}_M(X,x)$ is dense. 
    With $U$ and $V$ defined as above, we consider the smaller neighborhood 
    \[
        V_0=\biggl\{ T\in\mathcal{L}_M(X) \colon \|(T-T_0)u_i\|<\frac{\epsilon}{2},\ \|(T^*-T_0^*)f_j\|<\frac{\epsilon}{2},\ 1\leq i \leq s,\ 1\leq j\leq t
        \biggr\}.
    \]
    Since $\operatorname{MIX}_{<M}(X)$ is SOT$^*$-dense in $\mathcal{L}_M(X)$, there exists $R\in V_0\cap \operatorname{MIX}_{<M}(X)$.
    Thus $R$ is mixing and $\|R\|<M$.
    Let $C=\max \{1,\|u_1\|,\dotsc,\|u_s\|,\|f_1\|,\dotsc,\|f_t\|\}$ and choose $\eta>0$ such that 
    \[
         \eta< \frac{1}{2}\min \biggl\{M-\|R\|,\frac{\epsilon}{C}\biggr\}.
    \]
    Since $x \neq 0$, by the Hahn-Banach theorem, there exists a functional $g\in X^*$ such that $g(x)=1$. 
    Pick $0<\theta<\frac{1}{2}$ with $\frac{2\|R\|\theta}{1-\theta}<\eta$.
    Since $R$ is mixing, it is hypercyclic. 
    Hence $\operatorname{HC}(R)$ is dense in $X$.
    Since $x\neq 0$, there exists $z\in \operatorname{HC}(R)$ such that $\|z-x\|< \frac{\theta}{\|g\|}$.
    Define $W=I+K$, where $K \colon X\to X$ is defined by 
    \[
        K(v)=g(v)(z-x).
    \]
    Then $W(x)=z$ and $\|W-I\|=\|K\|\leq \|g\|\cdot \|z-x\|<\theta<1$.
    Hence $W$ is invertible with $\|W^{-1}\|<\frac{1}{1-\theta}$.
    Define $S=W^{-1}RW$. 
    Since mixing is invariant under similarity, $S$ is mixing.
    And for each $n\geq 0$, $S^nx=W^{-1}R^nWx=W^{-1}R^nz$.
    Since $z\in \operatorname{HC}(R)$ and $W^{-1}$ is a homeomorphism of $X$, $x\in\operatorname{HC}(S)$.
    Moreover, since $S-R=W^{-1}RW-W^{-1}WR=W^{-1}(R(W-I)-(W-I)R)$,
    \[
        \|S-R\|\leq 2\|W^{-1}\|\cdot \|R\| \cdot\|W-I\|<\frac{2\|R\|\theta}{1-\theta}<\eta<\frac{1}{2}(M-\|R\|).
    \]
    Then
    \[
        \|S\|\leq \|R\|+\|S-R\|<\|R\|+\frac{1}{2}(M-\|R\|)<M.
    \]
    Thus $S\in \mathcal{L}_M(X)$. 
    And for each $1\leq i \leq s$ and $1\leq j\leq t$, we have that 
    \[
        \|(S-T_0)u_i\|\leq \|(S-R)u_i\|+\|(R-T_0)u_i\|<\eta C+\frac{\epsilon}{2}<\epsilon,
    \]
    \[
        \|(S^*-T_0^*)f_j\|\leq \|(S^*-R^*)f_j\|+\|(R^*-T_0^*)f_j\|<\eta C+\frac{\epsilon}{2}<\epsilon.
    \]
    Thus $S\in V\subset U$. 
    Then $S\in U\cap \operatorname{MIX}_M(X,x)$. 
    Therefore $\operatorname{MIX}_M(X,x)$ is dense.
\end{proof}

The next result strengthens Proposition \ref{WMIX-dense-l^p} by requiring a fixed non-zero vector to be hypercyclic.

\begin{prop}
Let $X=\ell_p$, $1<p<\infty$ and $M>1$.
Then for every non-zero vector $x$ in $X$,
\[
 \operatorname{WMIX}_M(X,x):=\{T\in \mathcal{L}_M(X) \colon T \text{ is weakly mixing and }x \text{ is a hypercyclic vector}\}
\]
is residual in $(\mathcal{L}_M(X),SOT^*)$.
\end{prop}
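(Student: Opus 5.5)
We write $\operatorname{WMIX}_M(X,x)=\operatorname{WMIX}_M(X)\cap \mathcal{H}_x$, where
\[
\mathcal{H}_x:=\{T\in \mathcal{L}_M(X)\colon x\in \operatorname{HC}(T)\}.
\]
By Proposition \ref{WMIX-dense-l^p}, $\operatorname{WMIX}_M(X)$ is a dense $G_\delta$ subset of $(\mathcal{L}_M(X),SOT^*)$. By the Baire category theorem in the Polish space $(\mathcal{L}_M(X),SOT^*)$, it therefore suffices to show that $\mathcal{H}_x$ is a dense $G_\delta$ set. In fact we will show directly that the intersection is $G_\delta$ and dense.

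\emph{$G_\delta$ part.} Fix a countable basis $(U_m)_{m\geq 1}$ of non-empty open subsets of $X$. Then
\[
\mathcal{H}_x=\bigcap_{t\geq 1}\bigcup_{n\geq 0}\{T\in\mathcal{L}_M(X)\colon T^nx\in U_t\}.
\]
Each set $\{T\colon T^nx\in U_t\}$ is SOT-open in $\mathcal{L}_M(X)$. This follows from the telescoping estimate already used in the proof of Proposition \ref{WMIX-dense-l^p}:
\[
\|S^nx-T^nx\|\leq \sum_{k=0}^{n-1}M^{n-1-k}\|(S-T)T^kx\|,
\]
applied with the vectors $y_k=T^kx$ fixed. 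Since SOT-open sets are SOT$^*$-open, $\mathcal{H}_x$ is a $G_\delta$ subset of $(\mathcal{L}_M(X),SOT^*)$.

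\emph{Density part.} By Proposition \ref{fixed-vector-MIX-dense-l^p}, the set $\operatorname{MIX}_M(X,x)$ is SOT$^*$-dense in $\mathcal{L}_M(X)$. Since mixing implies weak mixing, we have
\[
\operatorname{MIX}_M(X,x)\subset \operatorname{WMIX}_M(X)\cap\mathcal{H}_x=\operatorname{WMIX}_M(X,x).
\]
Hence $\operatorname{WMIX}_M(X,x)$ is dense. It is also $G_\delta$, as an intersection of two $G_\delta$ sets, so it is residual.

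There is no real obstacle here: the substantive work, namely perturbing a mixing operator by a similarity $W=I+K$ with $Wx=z\in \operatorname{HC}(R)$ while staying within the norm bound and the SOT$^*$-neighbourhood, was already done in Proposition \ref{fixed-vector-MIX-dense-l^p}. The only point needing care is that the $G_\delta$ description of $\mathcal{H}_x$ uses a single fixed vector $x$. This makes it simpler than the description of $\operatorname{HC}_M(X)$, and the uniform bound $\|S\|\leq M$ on the ball is what makes the openness estimate work.
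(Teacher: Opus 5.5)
Your proposal is correct and follows essentially the same route as the paper: you write $\operatorname{WMIX}_M(X,x)=\operatorname{WMIX}_M(X)\cap\mathcal{H}_x$, show $\mathcal{H}_x$ is $G_\delta$ through the countable description with the fixed vector $x$, and get density from $\operatorname{MIX}_M(X,x)$ via Proposition \ref{fixed-vector-MIX-dense-l^p}. The only cosmetic differences are that you apply that density directly to the intersection and justify openness with the telescoping estimate, whereas the paper uses it to show $\mathcal{H}_x$ is dense and then intersects two residual sets.
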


\begin{proof}
    By Proposition \ref{WMIX-dense-l^p}, 
    \[
        \operatorname{WMIX}_M(X):=\{T\in \mathcal{L}_M(X) \colon T \text{ is weakly mixing}\}
    \] 
    is residual in $(\mathcal{L}_M(X),SOT^*)$.
    On the other hand, we claim that 
    \[
         \mathcal{H}_x:=\bigl\{T\in\mathcal{L}_M(X)\colon x\in\operatorname{HC}(T)\bigr\}
    \]
    is a dense $G_\delta$ subset of $(\mathcal{L}_M(X), SOT^*)$. 
    Choose a countable basis of non-empty open subsets $(U_m)_{m\geq 1}$ of $X$. 
    Observe that 
    \[
    \mathcal{H}_x=\bigcap_{m\in\mathbb{N}}\bigcup_{n\geq 0}\left\{T\in\mathcal{L}_M(X) \colon 
    T^n x\in U_m
    \right\}. 
    \]
    For every fixed $n\geq 0$, the map $T\to T^nx$ is SOT$^*$-continuous.
    Hence each set $\left\{T\in\mathcal{L}_M(X) \colon 
    T^n x\in U_m
    \right\}$ is SOT$^*$-open, and therefore $\mathcal{H}_x$ is a $G_\delta$ subset of $\mathcal{L}_M(X)$.
    It is clear that $\operatorname{MIX}_M(X,x) \subset \mathcal{H}_x$.
    By Proposition \ref{fixed-vector-MIX-dense-l^p}, we have that $\operatorname{MIX}_M(X,x)$ is SOT$^*$-dense in $\mathcal{L}_M(X)$.
    Then $\mathcal{H}_x$ is SOT$^*$-dense in $\mathcal{L}_M(X)$.
    Therefore, $\operatorname{WMIX}_M(X,x)$, being the intersection of finitely many residual subsets, is residual. 
    This completes the proof.
\end{proof}

\begin{rem}\label{section3-c0-l1}
    The proofs in this section rely on the finite-dimensional approximation criteria given in Lemmas \ref{dense criteria-lp} and \ref{dense criteria-c0}, as well as the $G_\delta$ characterization corresponding to the dynamical classes under consideration.
    Hence the density and $G_\delta$ arguments in this section extend to $X=c_0$ with respect to SOT$^*$.
    For $X=\ell_1$, analogous statements hold with SOT in place of SOT$^*$.
\end{rem}

\section{Typical weak disjointness and disjoint hypercyclicity}

In this section, we prove assertions (3) and (4) of Theorem \ref{main-result-1}.
We focus on the genericity of weak disjointness and disjoint hypercyclicity. 
The former is characterized by the hypercyclicity of the direct sum $T\oplus S$, while the latter requires this direct sum to admit a hypercyclic vector on the diagonal of $X^2$. 
We investigate these properties first with one operator fixed and then for pairs of operators, and also consider prescribed-vector variants in which a fixed diagonal vector is required to be hypercyclic.
We also establish the typical dense d-hypercyclicity of the $k$-tuple $(T,T^2,\dotsc,T^k)$.

\begin{prop}\label{WD_M(X)-residual-lp}
    Let $X=\ell_p$, $1<p<\infty$, $M>1$ and $S$ be a hypercyclic operator on a Banach space $Y$. 
    Then 
    \[
        \operatorname{WD}_M^S(X):=\{T\in \mathcal{L}_M(X) \colon T \text{ is weakly disjoint from } S\}
    \]
    is residual in $(\mathcal{L}_M(X),SOT^*)$.
\end{prop}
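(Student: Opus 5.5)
We will show that $\operatorname{WD}_M^S(X)$ is a $G_\delta$ subset of $(\mathcal{L}_M(X),SOT^*)$ and that it is dense; Baire's theorem then gives residuality.

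\emph{$G_\delta$ part.} We copy the argument of Proposition \ref{WMIX-dense-l^p}. Fix countable bases $(U_m)_{m\ge1}$ of $X$ and $(V_m)_{m\ge1}$ of $Y$; here we use that $Y$ is separable, since it carries a hypercyclic operator. By the Birkhoff transitivity theorem, $T\oplus S$ is hypercyclic if and only if, for all $a,b,c,d$, there is $n$ with $T^n(U_a)\cap U_b\neq\emptyset$ and $S^n(V_c)\cap V_d\neq\emptyset$. For fixed $n$ the condition on $S$ does not depend on $T$, and the set $\{T\colon T^n(U_a)\cap U_b\neq\emptyset\}$ is SOT-open, as shown in that proof. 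Hence
\[
\operatorname{WD}_M^S(X)=\bigcap_{a,b,c,d}\bigcup_{n\in N_S(V_c,V_d)}\{T\in\mathcal{L}_M(X)\colon T^n(U_a)\cap U_b\neq\emptyset\}
\]
is $G_\delta$.

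\emph{Density.} By Lemma \ref{dense criteria-lp}, and after the scaling reduction used in Proposition \ref{fixed-vector-MIX-dense-l^p}, we may assume $\|A\|<M$. It then suffices to show that every $T=B_{A,w}$ from Proposition \ref{MIX-dense-l^p} is weakly disjoint from $S$. This follows from a fact we will prove: \emph{$B_{A,w}$ is mixing, so $N_T(U,V)$ is cofinite, and the direct sum of a mixing operator with a hypercyclic operator is hypercyclic.} Indeed, given open sets $U_a\times V_c$ and $U_b\times V_d$, the set $N_S(V_c,V_d)$ is infinite, because a hypercyclic operator on an infinite-dimensional space visits each open set infinitely often. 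A cofinite set meets any infinite set, so there is $n$ in both return sets, and $T\oplus S$ is topologically transitive, hence hypercyclic.

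The point needing care is that $N_S(V_c,V_d)$ is infinite. Take $y\in\operatorname{HC}(S)\cap V_c$. The orbit of $y$ is dense, and after removing finitely many points it is still dense, since $Y$ has no isolated points. So $S^ny\in V_d$ for infinitely many $n$. Everything else is routine, so the main work is done by the mixing density result already established.
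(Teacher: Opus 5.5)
Your proposal is correct and follows the paper's proof. You use the same $G_\delta$ decomposition $\bigcap_{a,b,c,d}\bigcup_{n\in N_S(V_c,V_d)}\{T\colon T^n(U_a)\cap U_b\neq\emptyset\}$, and you get density from Proposition \ref{MIX-dense-l^p} via the inclusion $\operatorname{MIX}_M(X)\subset\operatorname{WD}_M^S(X)$. Your justification of that inclusion (a cofinite return set meets the infinite set $N_S(V_c,V_d)$) makes explicit a step the paper leaves unstated.
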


\begin{proof}
    Since $X$ is a separable, choose a countable basis of non-empty open subsets $(U_m)_{m\geq 1}$ of $X$.
    Since $S$ is hypercyclic on the Banach space $Y$, there exists a countable basis $(V_r)_{r\geq 1}$ of non-empty open subsets of $Y$.
    Thus
    \[
        \operatorname{WD}_M^S(X)=\bigcap_{p,q,s,t\geq 1}\bigcup_{n\in N_S(V_p,V_q)}\{T\in \mathcal{L}_M(X) \colon  T^n(U_s)\cap U_t\neq \emptyset\}.
    \]
    For fixed $n,s,t$, let $D_{n,s,t}=\{T\in \mathcal{L}_M(X) \colon  T^n(U_s)\cap U_t\neq \emptyset\}$. 
    Since $T\to T^nx$ is SOT$^*$-continuous, $D_{n,s,t}$ is SOT$^*$-open. 
    Therefore $\operatorname{WD}_M^S(X)$ is a $G_\delta$ subset of $(\mathcal{L}_M(X), SOT^*)$. 

    Moreover, Proposition \ref{MIX-dense-l^p} and the inclusion $\operatorname{MIX}_M(X)\subset \operatorname{WD}_M^S(X)$ imply that $\operatorname{WD}_M^S(X)$ is dense in $(\mathcal{L}_M(X), SOT^*)$. 
    Therefore $\operatorname{WD}_M^S(X)$ is residual in $(\mathcal{L}_M(X), SOT^*)$.
\end{proof}

The fixed-operator result naturally suggests studying weak disjointness as a global relation in the product operator space.

\begin{prop}\label{WD_M(XxX)-residual-lp}
    Let $X=\ell_p$, $1<p<\infty$ and $M>1$.
    Then 
    \[
        \operatorname{WD}_M^{(2)}(X):=\{(T,S)\in \mathcal{L}_M(X)\times \mathcal{L}_M(X) \colon T \text{ is weakly disjoint from } S\}
    \]
   is residual in $(\mathcal{L}_M(X)\times \mathcal{L}_M(X), SOT^*\times SOT^*)$.
\end{prop}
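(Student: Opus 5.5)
The plan is to show that $\operatorname{WD}_M^{(2)}(X)$ is a $G_\delta$ subset of the Polish space $(\mathcal{L}_M(X)\times \mathcal{L}_M(X), SOT^*\times SOT^*)$ and that it is dense; residuality then follows from the Baire category theorem.

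\emph{$G_\delta$ step.} Fix a countable basis $(U_m)_{m\geq 1}$ of non-empty open subsets of $X$. By the Birkhoff transitivity theorem applied to $T\oplus S$ on $X\oplus X$ (using products $U_a\times U_b$ as a basis), we have
\[
\operatorname{WD}_M^{(2)}(X)=\bigcap_{a,b,c,d\geq 1}\bigcup_{n\geq 0}\bigl\{(T,S)\colon T^n(U_a)\cap U_b\neq\emptyset,\ S^n(U_c)\cap U_d\neq\emptyset\bigr\}.
\]
In the proof of Proposition \ref{WMIX-dense-l^p} it was shown that $D_{n,a,b}=\{T\in\mathcal{L}_M(X)\colon T^n(U_a)\cap U_b\neq\emptyset\}$ is SOT-open, hence SOT$^*$-open, in $\mathcal{L}_M(X)$. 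Each inner set is the product $D_{n,a,b}\times D_{n,c,d}$, which is open in the product topology. Hence $\operatorname{WD}_M^{(2)}(X)$ is $G_\delta$.

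\emph{Density step.} Basic open sets in the product have the form $\mathcal{U}\times\mathcal{V}$ with $\mathcal{U},\mathcal{V}$ non-empty SOT$^*$-open in $\mathcal{L}_M(X)$. By Proposition \ref{MIX-dense-l^p}, we can pick a mixing $T\in\mathcal{U}$ and a mixing $S\in\mathcal{V}$. If $T$ is mixing and $S$ is mixing, then $T\oplus S$ is mixing: for open $U_1,V_1,U_2,V_2$, both $N_T(U_1,V_1)$ and $N_S(U_2,V_2)$ are cofinite, so their intersection $N_{T\oplus S}(U_1\times U_2,V_1\times V_2)$ is non-empty. In particular $T\oplus S$ is hypercyclic, so $(T,S)\in\operatorname{WD}_M^{(2)}(X)\cap(\mathcal{U}\times\mathcal{V})$. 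Alternatively, density follows from Proposition \ref{WD_M(X)-residual-lp}: for a fixed mixing $S\in\mathcal{V}$ the section $\operatorname{WD}_M^S(X)$ is dense, but the direct argument above is simpler.

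\emph{Main obstacle.} The only genuine point is the openness of the sets $D_{n,a,b}$, which needs the uniform bound $\|T\|\leq M$. This was already established earlier, so no step is expected to be difficult. One should also note that $\mathcal{L}_M(X)\times\mathcal{L}_M(X)$ is Polish, being a product of two Polish spaces, so that Baire's theorem applies.
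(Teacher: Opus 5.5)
Your proposal is correct and follows essentially the same route as the paper. You get the $G_\delta$ description from Birkhoff's theorem for $T\oplus S$ on $X\oplus X$ using SOT-open (hence SOT$^*$-open) sets, and you get density from Proposition \ref{MIX-dense-l^p}. The differences are cosmetic: you reuse the openness of $D_{n,a,b}$ from Proposition \ref{WMIX-dense-l^p} by writing each set as a product, where the paper re-derives the telescoping estimate, and you take both factors mixing, while the paper takes one mixing and the other merely hypercyclic (a cofinite return set meets an infinite one).
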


\begin{proof}
    Since $X$ is a separable Banach space, there exists a countable basis $(U_m)_{m\geq 1}$ in $X$. 
    Then $(U_p\times U_q)_{p,q\geq 1}$ is a countable basis of $X^2$. 
    Hence 
    \[
        \operatorname{WD}_M^{(2)}(X)=\bigcap_{p,q,s,t\geq 1}\bigcup_{n\geq 1}\{(T,S) \colon  (T\oplus S)^n(U_p\times U_q)\cap (U_s\times U_t)\neq \emptyset\}.
    \]
    For fixed $n\geq 1$ and $p,q,s,t\geq 1$, let $D_{n,p,q,s,t}=\{(T,S) \colon  (T\oplus S)^n(U_p\times U_q)\cap (U_s\times U_t)\neq \emptyset\}$. 
    We claim that $D_{n,p,q,s,t}$ is open in $\mathcal{L}_M(X)\times \mathcal{L}_M(X)$, endowed with the product SOT. 
    Fix $(T_0,S_0)\in D_{n,p,q,s,t}$. 
    Then there exists $(x,y)\in U_p\times U_q$ such that $(T_0^nx,S_0^ny)\in U_s\times U_t$. 
    Choose $\epsilon_1, \epsilon_2>0$ so that 
    \[
        B(T_0^nx,\epsilon_1)\subset U_s, \quad 
        B(S_0^ny,\epsilon_2)\subset U_t.
    \]
    For each $R\in \mathcal{L}_M(X)$, 
    \[
        \|R^nx-T_0^nx\|\leq \sum_{k=0}^{n-1}\|R^{n-1-k}\| \|(R-T_0)T_0^k x\| \leq \sum_{k=0}^{n-1}M^{n-1-k}\|(R-T_0)T_0^k x\|.
    \]
    Hence, if 
    \[
        \|(R-T_0)T_0^k x\|<\frac{\epsilon_1}{nM^{n-1}}, \quad  k=0,\dotsc,n-1,
    \]
    then $R^nx\in U_s$. 
    So the set of all such $R$, denoted by $U_S$, is an SOT-neighborhood of $T_0$. 
    Similarly, the set of all $Q\in \mathcal{L}_M(X)$ with 
    \[
        \|(Q-S_0)S_0^k y\|<\frac{\epsilon_2}{nM^{n-1}}, \quad k=0,\dotsc,n-1,
    \]
    denoted by $U_T$, is an SOT-neighborhood of $S_0$ and $Q^ny\in U_t$. 
    Thus for each $(R,Q)\in U_S\times U_T$, 
    \[
        (R\oplus Q)^n(x,y)=(R^nx,Q^ny)\in U_s\times U_t.
    \]
    Since $(x,y)\in U_p\times U_q$, it follows that $(R,Q)\in D_{n,p,q,s,t}$. 
    Therefore $D_{n,p,q,s,t}$ is SOT-open.
    Thus $D_{n,p,q,s,t}$ is SOT$^*$-open. 
    Therefore $\operatorname{WD}_M^{(2)}(X)$ is a $G_\delta$ subset of $\mathcal{L}_M(X)\times \mathcal{L}_M(X)$. 
    
    Moreover, by Proposition \ref{MIX-dense-l^p}, $\operatorname{MIX}_M(X)=\{S\in \mathcal{L}_M(X) \colon S \text{ is mixing}\}$ is dense in $(\mathcal{L}_M(X), SOT^*)$. 
    Since 
    \[
        \operatorname{MIX}_M(X) \subset \operatorname{HC}_M(X):=\{S\in \mathcal{L}_M(X) \colon S \text{ is hypercyclic}\},
    \]
    $\operatorname{HC}_M(X)$ is dense in $(\mathcal{L}_M(X), SOT^*)$. 
    For every non-empty open set $U\times V \subset \mathcal{L}_M(X) \times \mathcal{L}_M(X)$, there exist $S\in U\cap \operatorname{MIX}_M(X)$ and $T\in V\cap \operatorname{HC}_M(X)$, thus $(S,T)\in (U\times V)\cap \operatorname{WD}_M^{(2)}(X)$. 
    Hence $\operatorname{WD}_M^{(2)}(X)$ is dense. 
    Therefore $\operatorname{WD}_M^{(2)}(X)$ is residual in $(\mathcal{L}_M(X) \times \mathcal{L}_M(X), SOT^*\times SOT^*)$.
\end{proof}

Having studied the typical occurrence of weak disjointness, we now turn to the stronger notion of disjoint hypercyclicity. 
Recall that disjoint hypercyclicity requires $\operatorname{HC}(T\oplus S)\cap \Delta_X\neq \emptyset$, where $\Delta_X=\{(x,x)\in X\times X\}$.
Under a suitable compatibility condition, a similarity argument transforms a hypercyclic vector of a direct sum into a diagonal hypercyclic vector. 
The following observation is an immediate consequence of the similarity principle from \cite{S10}.

\begin{lem}\label{d-hypercyclic criteria}
    Let $X$ be a Banach space.  
    If there exist an invertible operator $W\in\mathcal{L}(X)$ and $x\in X$ such that 
    $(x,Wx)\in \operatorname{HC}(T\oplus S)$, then $(WTW^{-1},S)$ is d-hypercyclic. 
\end{lem}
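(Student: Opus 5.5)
The plan is to apply the definition of hypercyclicity of the direct sum $WTW^{-1}\oplus S$ directly, using the conjugating operator $W\oplus I$ on $X\times X$. Set $y:=Wx$. The goal is to show that $y$ is a d-hypercyclic vector for $(WTW^{-1},S)$, i.e.\ that
\[
    \{((WTW^{-1})^n y, S^n y)\colon n\geq 0\}
\]
is dense in $X\times X$. Since $y$ is one vector, the pair $(y,y)$ lies on the diagonal $\Delta_X$, which is exactly what disjoint hypercyclicity requires.

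The key computation is
\[
    (WTW^{-1})^n y = W T^n W^{-1} W x = W T^n x .
\]
This gives
\[
    \bigl((WTW^{-1})^n y, S^n y\bigr) = (W\oplus I)\bigl(T^n x, S^n(Wx)\bigr) = (W\oplus I)\,(T\oplus S)^n(x,Wx).
\]
By hypothesis the orbit of $(x,Wx)$ under $T\oplus S$ is dense in $X\times X$. The operator $W\oplus I$ is an invertible bounded operator on $X\times X$, hence a homeomorphism. It therefore maps dense sets onto dense sets, so the displayed orbit is dense and $y\in \operatorname{dHC}(WTW^{-1},S)$.

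There is no real obstacle. The only point needing care is to place the conjugation on the correct coordinate: the diagonal vector must be $Wx$ rather than $x$, and the conjugating map is $W\oplus I$, not $I\oplus W$. This is the similarity principle of \cite{S10} specialised to two operators. If desired, one can add the remark that density of such $x$ gives dense d-hypercyclicity, since $W$ is a homeomorphism.
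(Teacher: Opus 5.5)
Your proof is correct and is essentially the paper's argument. The paper also takes $Wx$ as the d-hypercyclic vector and uses $(WTW^{-1})^nWx=WT^nx$. It checks density on basic open sets $U\times V$ by pulling $U$ back through $W^{-1}$, which is the open-set form of your observation that the homeomorphism $W\oplus I$ carries the dense $T\oplus S$-orbit of $(x,Wx)$ onto the $WTW^{-1}\oplus S$-orbit of $(Wx,Wx)$.
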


\begin{proof}
 We claim that $Wx\in \operatorname{dHC}(WTW^{-1},S)$. 
 Let $U$ and $V$ be two non-empty open subsets of $X$. 
 Since $W$ is bijective, $W^{-1}U$ is also non-empty and open. 
 Now there is $n\in\mathbb{N}$ with $T^nx\in W^{-1}U$ and $S^nWx\in V$. 
 We compute that $(WTW^{-1})^nWx=WT^nW^{-1}Wx=WT^nx\in U$. 
\end{proof}

It is therefore natural to ask whether disjoint hypercyclicity is only achieved by similarity constructions, or is actually typical in suitable operator spaces.
To apply the preceding similarity argument with a prescribed diagonal vector,
we first need to guarantee the existence of many vectors $z$ close to the prescribed
vector $x$ such that $(z,x)$ is hypercyclic for $R\oplus S$. This is provided by
the following elementary Baire-category observation.

\begin{lem}\label{z,x-residual}
Let $X$ be a separable Banach space and $R,S\in\mathcal{L}(X)$.
Assume that $R$ is mixing and that $x\in \operatorname{HC}(S)$.
Then the set
\[
    E:=\{z\in X\colon (z,x)\in \operatorname{HC}(R\oplus S)\}
\]
is a dense $G_\delta$ subset of $X$.
\end{lem}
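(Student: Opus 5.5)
We show that $E$ is a $G_\delta$ set by writing it as a countable intersection of open sets, and then show that each of these open sets is dense; the Baire category theorem then finishes the proof.

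Fix a countable basis $(U_m)_{m\geq 1}$ of non-empty open subsets of $X$. Then $(U_s\times U_t)_{s,t}$ is a basis of $X\times X$, and
\[
E=\bigcap_{s,t\geq 1}\bigcup_{n\geq 0}\{z\in X\colon R^nz\in U_s,\ S^nx\in U_t\}
=\bigcap_{s,t\geq 1}\bigcup_{n\in N_S(x,U_t)}R^{-n}(U_s).
\]
Each $R^{-n}(U_s)$ is open because $R$ is continuous. Hence each set
\[
G_{s,t}:=\bigcup_{n\in N_S(x,U_t)}R^{-n}(U_s)
\]
is open, and $E$ is $G_\delta$.

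For density, fix $s,t$ and a non-empty open set $O\subset X$. We need some $n\in N_S(x,U_t)$ with $R^n(O)\cap U_s\neq\emptyset$.

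Since $R$ is mixing, the set $N_R(O,U_s)$ is cofinite. It therefore suffices to show that $N_S(x,U_t)$ is infinite.

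Because $x\in\operatorname{HC}(S)$, the orbit $\{S^nx\colon n\ge 0\}$ is dense in $X$. Since $X$ is an infinite-dimensional Banach space, it has no isolated points. A dense orbit in such a space visits every non-empty open set infinitely often. Indeed, $U_t$ minus the finitely many points $S^0x,\dots,S^Nx$ is still a non-empty open set, so the orbit must enter it at some time $n>N$. Thus $N_S(x,U_t)$ is infinite.

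An infinite set meets every cofinite set, so some $n\in N_S(x,U_t)\cap N_R(O,U_s)$ exists. This gives $O\cap G_{s,t}\neq\emptyset$, so each $G_{s,t}$ is dense open. By the Baire category theorem, $E$ is a dense $G_\delta$ subset of $X$.

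The only subtle step is the infinitude of $N_S(x,U_t)$. This is exactly where mixing of $R$ is needed, rather than mere hypercyclicity: it turns "infinitely many return times of $x$ into $U_t$" into a common return time with $R$. The argument handles this via the absence of isolated points in $X$.
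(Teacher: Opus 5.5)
Your proposal is correct and follows essentially the same route as the paper: the same decomposition $E=\bigcap_{s,t}\bigcup_{n\in N_S(x,U_t)}R^{-n}(U_s)$, and density obtained by meeting the cofinite set $N_R(O,U_s)$ with the infinite set $N_S(x,U_t)$. You also justify why $N_S(x,U_t)$ is infinite, which the paper only asserts; the lemma does not assume $X$ infinite-dimensional, but your argument only needs that a non-zero normed space has no isolated points, and that holds.
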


\begin{proof}
Let $(U_m)_{m\geq 1}$ be a countable basis of non-empty open subsets of $X$.
Then $(U_i\times U_j)_{i,j\geq 1}$ is a countable basis of non-empty open subsets
of $X\times X$. 
We first observe that
\[
    E=\bigcap_{i,j\geq 1}\bigcup_{n\in N_S(x,U_j)} R^{-n}U_i,
\]
set $E_{i,j}=\bigcup_{n\in N_S(x,U_j)} R^{-n}U_i$.
For each $i,j\geq 1$, the set $E_{i,j}$ is open, since it is a union of open sets
$R^{-n}U_i$. 
Hence $E$ is a $G_\delta$ subset of $X$.

It remains to prove that each $E_{i,j}$ is dense in $X$. 
Let $W$ be a non-empty open subset of $X$.
Since $R$ is mixing, there exists $N\geq 1$ such that
\[
    R^n(W)\cap U_i\neq\emptyset
    \quad\text{for every }n\geq N.
\]
On the other hand, since $x\in \operatorname{HC}(S)$, the return set $N_S(x,U_j)$ is infinite.
Therefore we can choose $n\in N_S(x,U_j)$ with $n\geq N$.
For this $n$, we have $R^n(W)\cap U_i\neq\emptyset$.
Then $W\cap E_{i,j}\neq\emptyset$.
Thus $E_{i,j}$ is dense in $X$ for every $i,j\geq 1$.
Therefore $E$ is a countable intersection of open dense subsets of $X$. Since
$X$ is a Baire space, $E$ is a dense $G_\delta$ subset of $X$.
\end{proof}

We are now ready to prove the prescribed diagonal version.
More precisely, for a fixed operator $S$ with $x\in \operatorname{HC}(S)$, we show that a typical operator $T\in \mathcal{L}(X)$ satisfies the stronger property that the direct sum $T\oplus S$ is hypercyclic with the prescribed diagonal vector $(x,x)$ as a hypercyclic vector.

\begin{prop}
Let $X=\ell_p$, $1<p<\infty$, $M>1$, and $S\in \mathcal{L}(X)$. 
If $x\in \operatorname{HC}(S)$, then 
\[
 \operatorname{DHC}_M^S(X,x):=\{T\in \mathcal{L}_M(X) \colon(x,x)\in \operatorname{HC}(T\oplus S)\}
\]
is residual in $(\mathcal{L}_M(X),SOT^*)$.
\end{prop}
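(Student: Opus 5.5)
We prove the statement in two steps: first that $\operatorname{DHC}_M^S(X,x)$ is a $G_\delta$ set, then that it is dense. Let $(U_m)_{m\geq 1}$ be a countable basis of non-empty open subsets of $X$. Then
\[
\operatorname{DHC}_M^S(X,x)=\bigcap_{i,j\geq 1}\ \bigcup_{n\in N_S(x,U_j)}\{T\in\mathcal{L}_M(X)\colon T^nx\in U_i\}.
\]
For fixed $n$ the map $T\mapsto T^nx$ is SOT-continuous on $\mathcal{L}_M(X)$, by the telescoping estimate $\|R^nx-T^nx\|\le\sum_{k}M^{n-1-k}\|(R-T)T^kx\|$ used in Proposition \ref{WMIX-dense-l^p}. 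Hence each inner set is SOT$^*$-open, and $\operatorname{DHC}_M^S(X,x)$ is $G_\delta$.

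For density we imitate the similarity trick of Proposition \ref{fixed-vector-MIX-dense-l^p}, combined with Lemma \ref{z,x-residual}. Fix a basic SOT$^*$-neighbourhood $V$ of some $T_0\in\mathcal{L}_M(X)$, determined by vectors $u_i$, functionals $f_j$ and $\epsilon>0$. Let $V_0$ be the corresponding neighbourhood with $\epsilon/2$. As in the first step of the proof of Proposition \ref{fixed-vector-MIX-dense-l^p}, choose a mixing $R\in V_0$ with $\|R\|<M$. Since $R$ is mixing and $x\in\operatorname{HC}(S)$, Lemma \ref{z,x-residual} shows that $E=\{z\colon (z,x)\in\operatorname{HC}(R\oplus S)\}$ is dense. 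Since $x\neq 0$ (a hypercyclic vector is non-zero), pick $g\in X^*$ with $g(x)=1$, fix a small $\theta>0$, and choose $z\in E$ with $\|z-x\|<\theta/\|g\|$.

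Put $W=I+g(\cdot)(z-x)$. Then $\|W-I\|<\theta$, so $W$ is invertible with $\|W^{-1}\|<1/(1-\theta)$, and $Wx=z$. Define $T=W^{-1}RW$. Then $T^nx=W^{-1}R^nz$. Given $i,j$, the hypercyclicity of $(z,x)$ for $R\oplus S$ yields $n$ with $R^nz\in W(U_i)$ (an open set, since $W$ is invertible) and $S^nx\in U_j$. Thus $(x,x)\in\operatorname{HC}(T\oplus S)$, which is exactly Lemma \ref{d-hypercyclic criteria} in a transposed form. The norm estimate $\|T-R\|\le 2\|W^{-1}\|\|R\|\|W-I\|<2\|R\|\theta/(1-\theta)$ is the same as before. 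Choosing $\theta$ small enough gives $\|T\|\le M$, and together with $\|(T-R)u_i\|,\|(T^*-R^*)f_j\|<\epsilon/2$ this puts $T\in V$. So $\operatorname{DHC}_M^S(X,x)$ is dense, and therefore residual.

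The main point needing care is the ordering of choices. $R$ must be fixed first, since $E$ depends on $R$. Then $\theta$ is chosen depending on $\|R\|$, $M-\|R\|$, $\epsilon$ and the $u_i,f_j$, and only then is $z\in E$ chosen. Since $E$ is dense, this ordering is consistent. The other point is that $R$ must be mixing, not merely hypercyclic, so that Lemma \ref{z,x-residual} applies. This is supplied by the density of $\operatorname{MIX}_{<M}(X)$ established in the proof of Proposition \ref{fixed-vector-MIX-dense-l^p}.
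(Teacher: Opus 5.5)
Your proposal is correct and follows essentially the same route as the paper: a $G_\delta$ description via $\bigcup_{n\in N_S(x,U_j)}\{T\colon T^nx\in U_i\}$, then density by taking a mixing $R$ with $\|R\|<M$ nearby, a $z\in E$ close to $x$ (Lemma \ref{z,x-residual}), and the rank-one similarity $W=I+g(\cdot)(z-x)$, setting $T=W^{-1}RW$. The only differences are cosmetic: you approximate $T_0$ via an $\epsilon/2$-neighbourhood and use the explicit estimate $\|T-R\|\le 2\|W^{-1}\|\,\|R\|\,\|W-I\|$ from Proposition \ref{fixed-vector-MIX-dense-l^p}, whereas the paper picks $R\in U$ and argues via $A^{-1}RA\to R$ in norm.
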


\begin{proof}
    Choose a countable basis of non-empty open subsets $(U_m)_{m\geq 1}$ of $X$. 
    Observe that 
    \[
        \operatorname{DHC}_M^S(X,x)=\bigcap_{j,k\geq 1}\bigcup_{n\in N_S(x,U_j)}\{T\in \mathcal{L}_M(X) \colon T^nx\in U_k\}.
    \]
    For every fixed $n\geq 0$, the map $T\to T^nx$ is SOT$^*$-continuous.
    Hence each set $\{T\in \mathcal{L}_M(X) \colon T^nx\in U_k\}$ is SOT$^*$-open, and therefore $\operatorname{DHC}_M^S(X,x)$ is a $G_\delta$ subset of $\mathcal{L}_M(X)$.

    We next prove density. 
    Let $U$ be a non-empty SOT$^*$-open subset of $\mathcal{L}_M(X)$.
    By the proof of Proposition \ref{fixed-vector-MIX-dense-l^p},
    \[
    \operatorname{MIX}_{<M}(X)=\left\{T\in\mathcal{L}(X) \colon \|T\|<M  \text{ and }T\text{ is mixing}
    \right\}
    \]
    is SOT$^*$-dense in $\mathcal{L}_M(X)$. 
    We may choose $R\in U$ such that $R$ is mixing and $\|R\|<M$. 
    Since $R\in U$ and $U$ is an SOT$^*$-open subset, there exist $u_1,\dotsc,u_m\in X$, $f_1,\dotsc,f_s\in X^*$ and $\epsilon>0$ such that 
    \[
        N=\{Q\in\mathcal{L}_M(X)\colon \|(Q-R)u_i\|<\epsilon,\ \|(Q^*-R^*)f_l\|<\epsilon,\ 1\leq i\leq m,\ 1\leq l\leq s\}
    \]
    satisfies $R\in N\subset U$.
    Set $C=\max\{1,\|u_1\|,\dotsc,\|u_m\|,\|f_1\|,\dotsc,\|f_s\|\}$.
    If $Q\in \mathcal{L}(X)$ satisfies $\|Q-R\|<\frac{\epsilon}{C}$, then 
    \[
        \|(Q-R)u_i\|\leq\|Q-R\|\cdot \|u_i\|<\epsilon, \quad 1\leq i\leq m,
    \] 
    and
    \[
       \|(Q^*-R^*)f_l\|\leq\|Q-R\|\cdot \|f_l\|<\epsilon,\quad  1\leq l\leq s.
    \]
    Moreover, since $\|R\|<M$, we may also impose $\|Q-R\|<M-\|R\|$.
    Therefore, choosing $0<\eta <\min\{M-\|R\|,\frac{\epsilon}{C}\}$, we obtain 
    \[
        \|Q-R\|<\eta \quad\Longrightarrow\quad Q\in U \text{ and } \|Q\|<M.
    \]
    Since $R$ is mixing and $x\in \operatorname{HC}(S)$, by Lemma \ref{z,x-residual}, the set
    \[
        E:=\{z\in X\colon (z,x)\in \operatorname{HC}(R\oplus S)\}
    \]
    is dense in $X$.
    Since $x \neq 0$, there exists a functional $g\in X^*$ such that $g(x)=1$. 
    For $z\in X$, define $A=I+K$, where $K \colon X\to X$ is defined by 
    \[
        K(v)=g(v)(z-x).
    \]
    Then $A(x)=z$ and $\|A-I\|=\|K\|\leq \|g\|\cdot\|z-x\|\to 0$ as $z\to x$.
    Hence for $z$ sufficiently close to $x$, $A$ is invertible and $A^{-1}\to I$, $A^{-1}RA\to R$ with respect to the norm topology. 
    Since $E$ is dense in $X$, we may choose $z\in E$ sufficiently close to $x$ so that $A$ is invertible and $\|A^{-1}RA-R\|<\eta$.
    
    Define $T=A^{-1}RA$. 
    Then $T\in U$ and $\|T\|<M$. 
    Since $A(x)=z$, we have $x=A^{-1}z$.
    Since $z\in E$, $(z,x)\in \operatorname{HC}(R\oplus S)$. 
    That is $(z,A^{-1}z)\in \operatorname{HC}(R\oplus S)$.
    By Lemma \ref{d-hypercyclic criteria}, $(A^{-1}z,A^{-1}z)\in \operatorname{HC}(A^{-1}RA\oplus S)$.
    Therefore, $(x,x)\in \operatorname{HC}(T\oplus S)$.
    Thus $T\in U\cap \operatorname{DHC}_M^S(X,x)$.
    Therefore $\operatorname{DHC}_M^S(X,x)$ is a dense $G_\delta$ subset of $(\mathcal{L}_M(X),SOT^*)$.
\end{proof}

\begin{coro}
    Let $X=\ell_p$, $1<p<\infty$, $M>1$ and $S\in \mathcal{L}_M(X)$ be hypercyclic. 
    Then 
    \[
        \operatorname{DHC}_M^S(X):=\{T\in \mathcal{L}_M(X) \colon (T,S) \text{ is d-hypercyclic}\}
    \]
    is residual in $(\mathcal{L}_M(X),SOT^*)$.
\end{coro}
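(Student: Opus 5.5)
The corollary should follow from the preceding proposition by taking a countable union over a dense set of prescribed vectors. Alternatively, it follows directly from a $G_\delta$ description together with the density argument already given. I would give the direct argument.

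First, write
\[
\operatorname{DHC}_M^S(X)=\bigcap_{j,k\geq 1}\bigcup_{n\geq 0}\{T\in\mathcal{L}_M(X)\colon (T^n\times S^n)(\Delta_X)\cap (U_k\times U_j)\neq\emptyset\},
\]
using a countable basis $(U_m)_{m\geq1}$ of $X$. This equality holds because, by the Birkhoff-type argument, $(T,S)$ is d-hypercyclic if and only if for every pair $U_k,U_j$ there are $y$ and $n$ with $T^ny\in U_k$ and $S^ny\in U_j$. Indeed, the set of such $y$, namely $\bigcup_n (T^{-n}U_k\cap S^{-n}U_j)$, is open. If each of these sets is dense, their intersection is a dense $G_\delta$ set of d-hypercyclic vectors.

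Here is where I must be careful. Nonemptiness of these open sets alone does not give density, so the $G_\delta$ description should instead be phrased with an additional open set $U_i$ of starting vectors:
\[
\operatorname{DHC}_M^S(X)\supseteq\bigcap_{i,j,k}\bigcup_n\{T\colon \exists y\in U_i,\ T^ny\in U_k,\ S^ny\in U_j\}=:\mathcal{G}.
\]
Each inner set is SOT-open. This follows by the same telescoping estimate $\|Q^ny-T^ny\|\leq\sum_k M^{n-1-k}\|(Q-T)T^ky\|$ used in Proposition \ref{WMIX-dense-l^p}, and $S$ is fixed. Hence $\mathcal{G}$ is a $G_\delta$ set, and every $T\in\mathcal{G}$ is densely d-hypercyclic together with $S$.

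It then remains to show that $\mathcal{G}$ is dense. Since $\mathcal{G}$ is $G_\delta$, it suffices to show that each open set $\mathcal{G}_{i,j,k}$ in the intersection is dense. Fix $i$, pick $x\in U_i\cap\operatorname{HC}(S)$ (possible since $\operatorname{HC}(S)$ is dense), and note $x\neq0$. By the preceding proposition, $\operatorname{DHC}_M^S(X,x)$ is dense in $(\mathcal{L}_M(X),SOT^*)$. For any $T$ in it, $(x,x)\in\operatorname{HC}(T\oplus S)$, so some $n$ satisfies $T^nx\in U_k$ and $S^nx\in U_j$ with $x\in U_i$. Therefore $\operatorname{DHC}_M^S(X,x)\subset\mathcal{G}_{i,j,k}$, so each $\mathcal{G}_{i,j,k}$ is open and dense. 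By the Baire property of the Polish space $(\mathcal{L}_M(X),SOT^*)$, $\mathcal{G}$ is residual, and hence so is $\operatorname{DHC}_M^S(X)\supseteq\mathcal{G}$.

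The main subtle point is getting the right $G_\delta$ set. The clean route is to avoid proving that $\operatorname{DHC}_M^S(X)$ itself is $G_\delta$, and instead exhibit the residual subset $\mathcal{G}$ built from the prescribed-vector proposition, applied to one hypercyclic vector of $S$ in each basic open set. The hypothesis $\|S\|\leq M$ is not really needed.
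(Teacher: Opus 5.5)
Your final argument is correct, but it takes a longer road than the paper. The paper states this corollary without proof because it is immediate from the preceding proposition. Fix a single $x\in\operatorname{HC}(S)$, which is necessarily nonzero. The condition $(x,x)\in\operatorname{HC}(T\oplus S)$ says exactly that $x\in\operatorname{dHC}(T,S)$, so $\operatorname{DHC}_M^S(X,x)\subseteq\operatorname{DHC}_M^S(X)$. Any superset of a residual set is residual, and that finishes the proof. So no $G_\delta$ description of $\operatorname{DHC}_M^S(X)$, or of any subset of it, is needed. Neither a countable union over prescribed vectors (as in your opening sentence) nor a dense family of them is required, and the subtle point you identify does not arise. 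You are also right that $\|S\|\leq M$ plays no role, since the proposition allows any $S\in\mathcal{L}(X)$.

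What your construction buys is a stronger statement. Your set $\mathcal{G}$ consists of those $T$ for which $(T,S)$ is \emph{densely} d-hypercyclic. You correctly show it is a dense $G_\delta$: the $\mathcal{G}_{i,j,k}$ are open by the telescoping estimate, and they are dense because $\operatorname{DHC}_M^S(X,x)\subseteq\mathcal{G}_{i,j,k}$ for $x\in U_i\cap\operatorname{HC}(S)$. Even this upgrade follows more quickly from the proposition. Take $x_i\in U_i\cap\operatorname{HC}(S)$; then the countable intersection $\bigcap_i\operatorname{DHC}_M^S(X,x_i)$ is residual and is contained in $\mathcal{G}$, with no need to check openness of the $\mathcal{G}_{i,j,k}$.

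In a write-up, drop your first displayed equality and the accompanying biconditional. As you noticed yourself, nonemptiness of $\bigcup_n(T^{-n}U_k\cap S^{-n}U_j)$ for all $j,k$ does not by itself produce a d-hypercyclic vector. Your final argument never uses that claim.
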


The preceding proposition treats the case where the second operator is fixed.
We next establish the corresponding product result. 
Unlike the preceding proposition, both operators are now variable.

\begin{prop}
    Let $X=\ell_p$, $1<p<\infty$ and $M>1$. 
    Then for every non-zero vector $x$ in $X$,
    \[
        \mathcal{H}_{x,x}:=\{(T,S)\in \mathcal{L}_M(X)\times \mathcal{L}_M(X) \colon(x,x)\in \operatorname{HC}(T\oplus S)\}
    \]
    is residual in $(\mathcal{L}_M(X)\times \mathcal{L}_M(X), SOT^*\times SOT^*)$.
\end{prop}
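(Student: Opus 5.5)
The proof splits into a $G_\delta$ part and a density part, following the pattern of the fixed-$S$ prescribed diagonal proposition.

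\emph{$G_\delta$ part.} Let $(U_m)_{m\geq 1}$ be a countable basis of $X$. Then
\[
\mathcal{H}_{x,x}=\bigcap_{j,k\geq 1}\bigcup_{n\geq 0}\{(T,S)\colon T^nx\in U_k,\ S^nx\in U_j\}.
\]
For fixed $n$, the maps $T\mapsto T^nx$ and $S\mapsto S^nx$ are SOT-continuous on $\mathcal{L}_M(X)$; this follows from the telescoping estimate used in Proposition \ref{WMIX-dense-l^p}. Each inner set is therefore open in the product SOT$^*$ topology, and $\mathcal{H}_{x,x}$ is $G_\delta$.

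\emph{Density part.} It suffices to show density of $\mathcal{H}_{x,x}$ in every basic open set $U\times V$. The steps are as follows.

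First, by Proposition \ref{fixed-vector-MIX-dense-l^p} (the fixed-vector mixing density), choose $S\in V$ that is mixing with $x\in\operatorname{HC}(S)$. As in the proof of that proposition, the construction gives $\|S\|<M$, although only $S\in V\subset\mathcal{L}_M(X)$ is actually needed.

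Second, by the proof of Proposition \ref{fixed-vector-MIX-dense-l^p}, $\operatorname{MIX}_{<M}(X)$ is SOT$^*$-dense. Choose $R\in U$ mixing with $\|R\|<M$. As in the preceding proof, fix $\eta>0$ such that $\|Q-R\|<\eta$ forces $Q\in U$ and $\|Q\|<M$.

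Third, apply Lemma \ref{z,x-residual} to $R$ (mixing) and $S$ (with $x\in\operatorname{HC}(S)$). This shows that $E=\{z\colon (z,x)\in\operatorname{HC}(R\oplus S)\}$ is dense. Pick $z\in E$ very close to $x$, take $g\in X^*$ with $g(x)=1$, and set $A=I+g(\cdot)(z-x)$. Then $Ax=z$, $A$ is invertible for $z$ close to $x$, and $\|A^{-1}RA-R\|<\eta$.

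Fourth, set $T=A^{-1}RA$. Since $(z,x)=(Ax,x)\in\operatorname{HC}(R\oplus S)$, applying the invertible map $A^{-1}\oplus I$ gives $(x,x)\in\operatorname{HC}(T\oplus S)$, exactly as in the previous proof via Lemma \ref{d-hypercyclic criteria}. Hence $(T,S)\in(U\times V)\cap\mathcal{H}_{x,x}$.

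The Baire conclusion then follows, since $\mathcal{L}_M(X)\times\mathcal{L}_M(X)$ with the product of Polish topologies is Polish.

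The main obstacle is making sure that both coordinates can be chosen compatibly. This is resolved by choosing $S$ first, with $x$ hypercyclic for $S$, and only then perturbing $R$ by a similarity close to the identity. Because the similarity affects only the first coordinate, the hypothesis $x\in\operatorname{HC}(S)$ required by Lemma \ref{z,x-residual} stays intact. One must also check that the perturbation stays in $\mathcal{L}_M(X)$; this is why the strict inequality $\|R\|<M$ is needed.
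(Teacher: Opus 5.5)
Your proof is correct, but it takes a different route from the paper's.

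The paper treats the two coordinates symmetrically. It uses Proposition \ref{WD_M(XxX)-residual-lp} to find a weakly disjoint pair $(R_1,R_2)$ in $U\times V$ with $\|R_i\|<M$. It then picks $(z_1,z_2)\in\operatorname{HC}(R_1\oplus R_2)$ close to $(x,x)$ and conjugates \emph{both} operators by the rank-one perturbations $A_i=I+g(\cdot)(z_i-x)$ of the identity.

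You instead fix the second coordinate first. You take $S\in V$ with $x\in\operatorname{HC}(S)$ from Proposition \ref{fixed-vector-MIX-dense-l^p}, and then run the fixed-$S$ argument in the first coordinate: Lemma \ref{z,x-residual} plus a single conjugation. In effect you show that the pair statement follows formally from two earlier results. One is the density of $\{S\colon x\in\operatorname{HC}(S)\}$. The other is the residuality of $\operatorname{DHC}^S_M(X,x)$ for every $S$ with $x\in\operatorname{HC}(S)$, proved in the proposition just before this one. You could simply cite that proposition rather than re-running its proof. It places no norm restriction on $S$, and mixing of $S$ plays no role; only $x\in\operatorname{HC}(S)$ is used.

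What each route buys:
\begin{itemize}
\item The paper's argument avoids Lemma \ref{z,x-residual} and any mixing of $R$. It needs only that $\operatorname{HC}(R_1\oplus R_2)$ is dense in $X^2$, at the cost of relying on the pair weak-disjointness result.
\item Your argument reuses the prescribed-vector machinery and exhibits a fibered structure. For a dense set of $S$, the whole fiber $\{T\colon (T,S)\in\mathcal{H}_{x,x}\}$ is residual.
\end{itemize}
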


\begin{proof}
    Choose a countable basis of non-empty open subsets $(U_j\times U_k)_{j,k\geq 1}$ of $X\times X$. 
    Observe that 
    \[
       \mathcal{H}_{x,x}=\bigcap_{j,k\geq 1}\bigcup_{n\geq 0}\{(T,S)\in \mathcal{L}_M(X)\times \mathcal{L}_M(X) \colon (T^nx,S^nx)\in U_j\times U_k\}.
    \]
    For every fixed $n\geq 0$, the map $(T,S)\to (T^nx,S^nx)$ is product SOT$^*$-continuous.
    Hence each set $\{(T,S)\in \mathcal{L}_M(X)\times \mathcal{L}_M(X) \colon (T^nx,S^nx)\in U_j\times U_k\}$ is SOT$^*$-open, and therefore $\mathcal{H}_{x,x}$ is a $G_\delta$ subset of $\mathcal{L}_M(X)\times \mathcal{L}_M(X)$.

    We next prove density. 
    Let $U_1\times U_2$ be a non-empty SOT$^*$-open subset of $\mathcal{L}_M(X)\times \mathcal{L}_M(X)$.
    Choose $(T_0,S_0)\in U_1\times U_2$. 
    Without loss of generality, we can assume that $\|T_0\|<M$ and $\|S_0\|<M$.
    Choose $r$ satisfying $\max\{1,\|T_0\|,\|S_0\|\}<r<M$.
    By Proposition \ref{WD_M(XxX)-residual-lp}, there exists $(R_1,R_2)\in (U_1\cap \mathcal{L}_r(X))\times (U_2 \cap \mathcal{L}_r(X))$ such that $R_1$ is weakly disjoint from $R_2$ and $\|R_1\|\leq r<M$, $\|R_2\|\leq r<M$.
    Since $x \neq 0$, there exists a functional $g\in X^*$ such that $g(x)=1$. 
    Since $R_1\oplus R_2$ is hypercyclic, we can choose $(z_1,z_2)\in \operatorname{HC}(R_1\oplus R_2)$ close to $(x,x)$. 
    For $i=1,2$, define $A_i=I+K_i$, where $K_i \colon X\to X$ is defined by $K_i(v)=g(v)(z_i-x)$.
    Then $A_i(x)=z_i$ and $\|A_i-I\|=\|K_i\|\leq \|g\|\cdot \|z_i-x\|\to 0$ as $z_i\to x$.
    Hence for $z_i$ sufficiently close to $x$, $A_i$ is invertible and $A_i^{-1}\to I$, $A_i^{-1}R_iA_i\to R_i$ with respect to the norm topology. 
    Define
    \[
        T=A_1^{-1}R_1A_1, \quad S=A_2^{-1}R_2A_2.
    \]
    Choose $(z_1,z_2)$ sufficiently close to $(x,x)$ while also satisfying $\|T-R_1\|$ and $\|S-R_2\|<\eta$ for a suitable $\eta$. 
    Therefore $(T,S)\in U_1\times U_2$ and $\|T\|<M$, $\|S\|<M$.
    For each $n\geq 0$, 
    \begin{align*}
        (T\oplus S)^n(x,x)&=(A_1^{-1}\oplus A_2^{-1})(R_1\oplus R_2)^n(A_1\oplus A_2)(x,x)\\
        &=(A_1^{-1}\oplus A_2^{-1})(R_1\oplus R_2)^n(z_1,z_2).
    \end{align*}
    Since $(z_1,z_2)\in \operatorname{HC}(R_1\oplus R_2)$ and $A_1^{-1}\oplus A_2^{-1}$ is a homeomorphism of $X^2$, $(x,x)\in\operatorname{HC}(T\oplus S)$.
    Then $(T,S)\in (U_1\times U_2)\cap \mathcal{H}_{x,x}$.
    Thus $\mathcal{H}_{x,x}$ is dense in $\mathcal{L}_M(X)\times \mathcal{L}_M(X)$. Therefore $\mathcal{H}_{x,x}$ is residual in $(\mathcal{L}_M(X)\times \mathcal{L}_M(X), SOT^*\times SOT^*)$.
\end{proof}

\begin{coro}
    Let $X=\ell_p$, $1<p<\infty$, and $M>1$. 
    Then 
    \[
        \operatorname{DHC}_M(X):=\{(T,S)\in \mathcal{L}_M(X)\times \mathcal{L}_M(X) \colon (T,S) \text{ is d-hypercyclic}\}
    \]
    is residual in $(\mathcal{L}_M(X)\times \mathcal{L}_M(X), SOT^*\times SOT^*)$.
\end{coro}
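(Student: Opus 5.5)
The plan is to write $\operatorname{DHC}_M(X)$ as a union of the sets $\mathcal{H}_{x,x}$ from the preceding proposition, and to get the residuality from one of those sets.

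Fix any non-zero vector $x\in X$, for instance $x=e_0$. By the definition of d-hypercyclicity, $(T,S)$ is d-hypercyclic exactly when there is some $y\in X$ such that the orbit $\{(T^ny,S^ny)\colon n\geq 0\}$ is dense in $X^2$. This is the same as saying that $(y,y)\in\operatorname{HC}(T\oplus S)$ for some $y$. So for every non-zero $x$ we have the inclusion
\[
\mathcal{H}_{x,x}\subset \operatorname{DHC}_M(X).
\]
The preceding proposition shows that $\mathcal{H}_{x,x}$ is residual in $(\mathcal{L}_M(X)\times\mathcal{L}_M(X),SOT^*\times SOT^*)$. Any set containing a residual set is residual, so $\operatorname{DHC}_M(X)$ is residual.

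There is no real obstacle here. Residuality only needs the set to contain a dense $G_\delta$ set, so we do not have to check whether $\operatorname{DHC}_M(X)$ is itself $G_\delta$. If one wants that extra information, one can argue as in the proof of the preceding proposition. Fix a countable dense set $\{x_k\}$ in $X$ and use a countable basis $(U_j)_{j\geq 1}$ of $X$. A pair is d-hypercyclic if and only if for all $j,k$ the set
\[
\bigcup_{n\geq 0}\{y\colon (T^ny,S^ny)\in U_j\times U_k\}
\]
is open and dense. This uses Birkhoff's theorem applied to $T\oplus S$ restricted to the diagonal, where d-transitivity is equivalent to dense d-hypercyclicity. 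That characterization is not needed for the corollary, so the proof reduces to the one-line inclusion above.
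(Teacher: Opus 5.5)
Your argument is correct and matches the paper, which states this corollary as an immediate consequence of the preceding proposition: fix any non-zero $x$, note that $\mathcal{H}_{x,x}\subset\operatorname{DHC}_M(X)$, and use that a superset of a residual set is residual. Your side remark contains a false claim, although you do not use it. Density of the open sets $\bigcup_{n\geq 0}\{y\colon (T^ny,S^ny)\in U_j\times U_k\}$ for all $j,k$ (d-transitivity) characterizes \emph{dense} d-hypercyclicity, not d-hypercyclicity, and the two notions differ in general: there are d-hypercyclic pairs whose set of d-hypercyclic vectors is not dense, constructed by Sanders and Shkarin. So that argument only shows that the densely d-hypercyclic pairs form a $G_\delta$ set, not that $\operatorname{DHC}_M(X)$ does.
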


The pair case naturally extends to finite tuples. 
We next investigate typical d-hypercyclicity for $(T_1,\dotsc,T_k)$, focusing in particular on tuples of the form $(T,T^2,\dotsc,T^k)$.

\begin{lem}
    Let $X=\ell_p$, $1\leq p<\infty$, $M>1$ and $k\geq 2$. 
    Then there exists $T\in \mathcal{L}_M(X)$ such that $(T,T^2,\dotsc,T^k)$ is d-hypercyclic.
\end{lem}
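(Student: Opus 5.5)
Take $T=\lambda B$, where $B$ is the unilateral backward shift on $\ell_p$ and $\lambda$ is a scalar with $1<\lambda\leq M$; for instance $\lambda=M$. Then $\|T\|=\lambda\leq M$, so $T\in\mathcal{L}_M(X)$. Powers of a multiple of the backward shift are the model case for disjoint hypercyclicity: B\`es and Peris showed that $(\lambda_1B,\lambda_2B^2,\dotsc,\lambda_kB^k)$ is d-hypercyclic whenever $1<|\lambda_1|<|\lambda_2|<\dotsb<|\lambda_k|$. With $T^j=\lambda^jB^j$ we have $1<\lambda<\lambda^2<\dotsb<\lambda^k$, so this result applies. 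To keep the argument self-contained, I would instead verify the d-Hypercyclicity Criterion directly, taking $D=c_{00}$ as the dense set.

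For each $j=1,\dotsc,k$ define the right inverse $S_j=\lambda^{-j}F^j$, where $F$ is the forward shift. Then $T^jS_jy=y$ for every $y$. Given finitely supported $x_0,y_1,\dotsc,y_k$, set
\[
    z_n=x_0+\sum_{j=1}^k S_j^n y_j=x_0+\sum_{j=1}^k\lambda^{-jn}F^{jn}y_j .
\]
Since $\lambda>1$, we have $z_n\to x_0$.

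Next compute $(T^l)^nz_n=\lambda^{ln}B^{ln}z_n$ for each $l$, term by term:
\begin{itemize}
    \item $\lambda^{ln}B^{ln}x_0=0$ once $n$ exceeds the length of the support of $x_0$.
    \item For $j=l$, the term is exactly $y_l$.
    \item For $j<l$, the term is $\lambda^{(l-j)n}B^{(l-j)n}y_j$, which is $0$ for large $n$ because $y_j$ has finite support.
    \item For $j>l$, the term is $\lambda^{(l-j)n}F^{(j-l)n}y_j$, whose norm is $\lambda^{-(j-l)n}\|y_j\|\to 0$.
\end{itemize}
Hence $(T^ln_{\,}z_n)_{l=1}^k\to(y_1,\dotsc,y_k)$. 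In particular, for any non-empty open $U_0,U_1,\dotsc,U_k$ there are $n$ and $z\in U_0$ with $T^{ln}z\in U_l$ for all $l$.

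A standard Baire argument then yields a d-hypercyclic vector. Fix a countable base $(V_m)_{m\geq1}$ of $X^k$ made of products of open sets. The set
\[
    \bigcap_m\bigcup_n\{z\colon (T^nz,\dotsc,T^{kn}z)\in V_m\}
\]
is a countable intersection of open sets, each of which is dense by the step above. So it is a dense $G_\delta$, and any of its points lies in $\operatorname{dHC}(T,\dotsc,T^k)$. This gives dense d-hypercyclicity, which is more than the statement requires.

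The main point needing care is the cross terms with $j>l$: their decay requires the strict growth $\lambda^j>\lambda^l$, and this is where $M>1$ is used. The same computation works for every $1\leq p<\infty$.
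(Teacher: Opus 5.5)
Your proof is correct and follows essentially the same route as the paper: $T=\lambda B$ with $\lambda>1$, the test vector $x_0+\sum_{j=1}^k\lambda^{-jn}F^{jn}y_j$, and the same term-by-term computation of $T^{ln}$ on it. The only differences are that you write out the Baire-category step from the transitivity condition to dense d-hypercyclicity, which the paper leaves implicit, and that you allow $\lambda=M$ where the paper takes $\lambda<M$; this is harmless, since only $\|T\|\leq M$ is needed.
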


\begin{proof}
    We shall prove the stronger assertion that $(T,T^2,\dotsc,T^k)$ is densely d-hypercyclic.
    It is enough to show that for any non-empty open sets $U,V_1,V_2,\dotsc,V_k$ of $X$, there exist $x\in U$ and $n\geq 0$ such that $T^nx\in V_1, T^{2n}x\in V_2, \dotsc, T^{kn}x\in V_k$. 
    Choose $1<\lambda<M$ and let $T=\lambda B$, where $B$ is the unilateral backward shift on $\ell_p$, i.e. $B(x_0,x_1,x_2,\dotsc)=(x_1,x_2,\dotsc)$.
    Then $\|T\|=\lambda<M$, so $T\in \mathcal{L}_M(X)$.
    Choose finitely supported vectors $u\in U, v_1\in V_1, v_2\in V_2, \dotsc, v_k\in V_k$. 
    And pick $\epsilon_0,\epsilon_1, \dotsc, \epsilon_k>0$ such that $B(u,\epsilon_0)\subset U, B(v_1,\epsilon_1)\subset V_1, \dotsc, B(v_k,\epsilon_k)\subset V_k$.
    For each finitely supported vector $a$, let $r(a)=\max \{s\geq 0 \colon a_s\neq 0\}$. 
    Since $\lambda>1$, we can choose $n\geq 1$ so large that
    \[
        n>\max\{r(u),r(v_1), \dotsc, r(v_k)\},
    \]
    \[
        \lambda^{-n}(\sum_{t=1}^k\|v_t\|)<\epsilon_0, \quad 
        \lambda^{-n}(\sum_{t=j+1}^k\|v_t\|)<\epsilon_j \quad  (j=1,\dotsc,k),
    \]
    where the sum is interpreted as 0 when $j=k$.
    Let F be the unilateral forward shift, i.e. $F(x_0,x_1,x_2,\dotsc)=(0,x_0,x_1,\dotsc)$.
    Define
    \[
        x=u+\sum_{t=1}^k\lambda^{-tn}F^{tn}v_t.
    \]
    Thus
    \[
        \|x-u\|\leq \sum_{t=1}^k\lambda^{-tn}\|v_t\|<\epsilon_0.
    \]
    Hence $x\in U$. 
    For each fixed $j\in \{1,\dotsc,k\}$, since $T^{jn}=\lambda^{jn}B^{jn}$, 
    \[
        T^{jn}x=\lambda^{jn}B^{jn}u+\sum_{t=1}^k \lambda^{(j-t)n}B^{jn}F^{tn}v_t= v_j+\sum_{t=j+1}^k \lambda^{-(t-j)n}F^{(t-j)n}v_t.
    \]
    Thus
    \[
        \|T^{jn}x-v_j\|\leq \sum_{t=j+1}^k \lambda^{-(t-j)n}\|v_t\|\leq \sum_{t=j+1}^k \lambda^{-n}\|v_t\|<\epsilon_j.
    \]
    So $T^{jn}x\in B(v_j,\epsilon_j) \subset V_j$. 
    Thus $U\cap T^{-n}V_1\cap \dotsb \cap T^{-kn}V_k\neq \emptyset$.
    Therefore $(T,T^2,\dotsc,T^k)$ is densely d-hypercyclic, and in particular d-hypercyclic.
\end{proof}

Having established the existence of such operators, we now show that they are not exceptional: the corresponding class is residual in $(\mathcal{L}_M(X),SOT^*)$.

\begin{thm}\label{Mul-DHC-residual}
    Let $X=\ell_p$, $1<p<\infty$ and $M>1$. Then
    \[
        \operatorname{Mul-DHC}_M(X):=\left\{T\in \mathcal{L}_M(X)\colon (T,T^2,\dotsc,T^k) \text{ is densely d-hypercyclic }, \forall k\geq 2 \right\}
    \]
    is residual in $(\mathcal{L}_M(X),SOT^*)$.
    
    In particular, for a typical $T\in\mathcal{L}_M(X)$, the $k$-tuple $(T,T^2,\dotsc,T^k)$ is d-hypercyclic for every $k\geq 2$.
\end{thm}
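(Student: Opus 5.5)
The plan is to write $\operatorname{Mul-DHC}_M(X)$ as a countable intersection of SOT$^*$-open sets and then get density of each of them from a conjugation argument built on the preceding lemma.

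\emph{Step 1: the $G_\delta$ structure.} Fix a countable basis $(U_m)_{m\geq1}$ of non-empty open subsets of $X$. By the usual Birkhoff-type argument, $(T,\dotsc,T^k)$ is densely d-hypercyclic if and only if for all $U,V_1,\dotsc,V_k$ there is $n$ with $U\cap T^{-n}V_1\cap\dots\cap T^{-kn}V_k\neq\emptyset$. Indeed, this condition makes the corresponding sets of vectors open and dense, so $\operatorname{dHC}$ is a dense $G_\delta$. Hence
\[
\operatorname{Mul-DHC}_M(X)=\bigcap_{k\geq2}\ \bigcap_{s,t_1,\dotsc,t_k}\ \bigcup_{n\geq0} D_{n,s,t_1,\dotsc,t_k},
\]
where $D_{n,s,\bar t}=\{T\in\mathcal{L}_M(X)\colon \exists x\in U_s,\ T^{jn}x\in U_{t_j},\ j=1,\dotsc,k\}$. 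Each $D_{n,s,\bar t}$ is SOT-open, by the same telescoping estimate $\|S^mx-T^mx\|\leq\sum_{i<m}M^{m-1-i}\|(S-T)T^ix\|$ used in Proposition~\ref{WMIX-dense-l^p}, applied with $m=kn$. Therefore each $D_{n,s,\bar t}$ is SOT$^*$-open, and it suffices to show that $G_{k,s,\bar t}:=\bigcup_n D_{n,s,\bar t}$ is dense for each fixed $k,s,\bar t$. Baire's theorem then finishes the proof.

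\emph{Step 2: density via Lemma~\ref{dense criteria-lp}.} Fix $r$, $A\in\mathcal{L}(X_r)\cap\mathcal{L}_M(X)$ with $\|A\|<M$ (after scaling, as in the earlier proofs), and $\epsilon>0$. The natural choice is the operator $B_{A,w}$ from Proposition~\ref{MIX-dense-l^p}. It agrees with $A$ on $X_r$, its adjoint is $\epsilon$-close to $A^*$ on $e_0,\dotsc,e_{r-1}$, and it has norm at most $M$. It remains to prove that $(B_{A,w},B_{A,w}^2,\dotsc,B_{A,w}^k)$ is densely d-hypercyclic. This would show that the set in the theorem is in fact dense, and a fortiori that each $G_{k,s,\bar t}$ is dense.

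\emph{Step 3 (main obstacle): d-hypercyclicity of $(B,\dotsc,B^k)$ for $B=B_{A,w}$.} I would imitate the proof of the preceding lemma together with the block construction of Proposition~\ref{MIX-dense-l^p}. Take finitely supported $u,v_1,\dotsc,v_k$ (in blocks up to index $s$) and a large $n$. Set
\[
z=u+\sum_{j=1}^k y^{(j)},
\]
where $y^{(j)}$ is supported in the blocks near $jn$. Inside $y^{(j)}$, the blocks $jn+i$ ($1\leq i\leq s$) equal $w^{-jn}(v_j)_i$, and one correcting block at position $jn$ is chosen so that the $0$-th block of $B^{jn}z$ equals $(v_j)_0$. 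The point is that $B^{jn}$ kills $y^{(t)}$ for $t<j$ once $n>2s$, since those blocks all reach the $0$-th block and are then acted on by $A$. This is exactly where care is needed. The $0$-th block of $B^{jn}z$ receives contributions $A^{jn-m}\delta w^{m-1}$ from every block $m\leq jn$, including those of $y^{(t)}$ with $t<j$. These must be absorbed by choosing the correcting blocks recursively in $j=1,\dotsc,k$. Each correction has size $O\bigl((\|A\|/w)^{\text{gap}}\bigr)$ times bounded quantities plus $w^{-jn}$ terms, so it tends to $0$; here one uses $\|A\|<w$, $w>1$, and the $\delta^{-1}$ factor is fixed. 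For $t>j$, the term $B^{jn}y^{(t)}$ is $w^{jn}$ times blocks of size $w^{-tn}$, shifted away from the origin, so it is $O(w^{-n})$, as in the preceding lemma. Hence $z\to u$ and $B^{jn}z\to v_j$ for every $j$ as $n\to\infty$, which gives dense d-hypercyclicity.

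If this bookkeeping fails, the fallback is a conjugation argument: conjugate $\lambda B\oplus\cdots$ to approximate $A$. The recursive block correction above is the step I expect to require the most verification.
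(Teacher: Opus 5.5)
Your route is different from the paper's and it works, but one estimate in Step~3 is wrong as stated and has to be replaced by the argument below.

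\textbf{Comparison with the paper.} The paper's density step never produces an element of $\operatorname{Mul-DHC}_M(X)$. For each tuple $U,V_1,\dotsc,V_k$ it builds a new operator $T$: the compression $P_mT_0|_{E_m}$, plus $k$ disjoint weighted chains of length $n$ whose ends are sent to vectors $h_r$, and $0$ elsewhere. For this $T$ the identities $T^{rn}z=y_r+\delta_r u_0^{(r+1)}$ hold exactly, because each chain is traversed only once. That $T$ has finite rank, so it only witnesses density of each open set $\bigcup_n D_{n,i_0,\dotsc,i_k}$, and the norm and SOT$^*$ estimates must be redone. Your approach shows that the single operator $B_{A,w}$ of Proposition~\ref{MIX-dense-l^p} has $(B,B^2,\dotsc,B^k)$ densely d-hypercyclic for every $k$. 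Hence $\operatorname{Mul-DHC}_M(X)$ itself is dense, and Lemma~\ref{dense criteria-lp} together with the norm and adjoint bounds already proved for $B_{A,w}$ can be reused unchanged. The price is that the blocks at the time scales $n,2n,\dotsc,kn$ interact, which is exactly the bookkeeping you flag.

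\textbf{Where Step~3 goes wrong.} It is not enough that each correcting block $c_t$ tends to $0$. For $t>j$, $B^{jn}z$ contains the block $w^{jn}c_t$ at position $(t-j)n$, so you need $w^{(t-1)n}c_t\to0$. Your claim that the blocks of $y^{(t)}$ have size $w^{-tn}$, giving $O(w^{-n})$, is false for the correcting block.

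Suppose you define $c_t$ by expanding the $0$-th block of $B^{tn}z$ over all blocks $m\le tn$ and then estimate term by term. The term $\delta^{-1}w^{1-tn}A^{tn}u_0$, after multiplication by $w^{(t-1)n}$, is of size up to $\delta^{-1}w\,(\|A\|^t/w)^n\|u_0\|$, with equality for $A=aI$. This diverges whenever $\|A\|^t>w$, which is allowed since you only know $\|A\|<w<M$.

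These terms in fact cancel. To see it, compute $B^{jn}z=B^n(B^{(j-1)n}z)$ inductively. Put $v_0:=u$, take $n>s$, and set
\[
c_j=\delta^{-1}w^{1-jn}\Bigl((v_j)_0-A^n(v_{j-1})_0-\delta\sum_{i=1}^{s}A^{n-i}w^{i-1}(v_{j-1})_i\Bigr).
\]
By induction on $j$ one gets $B^{jn}z=v_j+\sum_{t>j}\tilde y^{(t,j)}$ exactly, where $\tilde y^{(t,j)}$ has:
\begin{itemize}
\item block $(t-j)n$ equal to $w^{jn}c_t$;
\item blocks $(t-j)n+i$ equal to $w^{(j-t)n}(v_t)_i$ for $1\le i\le s$;
\item all other blocks zero.
\end{itemize}
So $c_j$ depends only on $v_{j-1}$ and $v_j$, and for $j<t$,
\[
w^{jn}\|c_t\|\le\delta^{-1}w^{1-n}\|(v_t)_0\|+\delta^{-1}w\Bigl(\frac{\|A\|}{w}\Bigr)^{n}\|(v_{t-1})_0\|+\sum_{i=1}^{s}\Bigl(\frac{\|A\|}{w}\Bigr)^{n-i}\|(v_{t-1})_i\|\to0 .
\]
This is $O\bigl((\max\{1,\|A\|\}/w)^n\bigr)$ rather than $O(w^{-n})$, but it suffices. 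Combined with $w^{(j-t)n}\|(v_t)_i\|\le w^{-n}\|(v_t)_i\|$, it gives $z\to u$ and $B^{jn}z\to v_j$ for $j=1,\dotsc,k$. With this repair, Step~3 and hence your whole argument go through.
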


\begin{proof}
    Fix $k\geq 2$ and let $(U_m)_{m\geq 1}$ be a countable basis of non-empty open subsets of $X$.
    Set 
    \[\mathcal{D}_k
    :=\left\{T\in \mathcal{L}_M(X)\colon (T,T^2,\dotsc,T^k) \text{ is densely d-hypercyclic}\right\}.\]
    We shall prove that $\mathcal{D}_k$ is a dense $G_\delta$ subset of $(\mathcal{L}_M(X),SOT^*)$ for every fixed $k\geq 2$.
    We observe that 
    \[
        \mathcal{D}_k=\bigcap_{i_0,i_1,\dotsc,i_k\geq 1}\bigcup_{n\geq 0}\{T\in \mathcal{L}_M(X) \colon  U_{i_0}\cap T^{-n}U_{i_1}\cap \dotsc \cap T^{-kn}U_{i_k}\neq \emptyset\}.
    \]
    For fixed $n,i_0,\dotsc,i_k$, let $D_{n,i_0,i_1,\dotsc,i_k}=\{T\in \mathcal{L}_M(X) \colon  U_{i_0}\cap T^{-n}U_{i_1}\cap \dotsc T^{-kn}U_{i_k}\neq \emptyset\}$. 
    It is not difficult to verify that $D_{n,i_0,i_1,\dotsc,i_k}$ is SOT-open. 
    Thus $D_{n,i_0,i_1,\dotsc,i_k}$ is SOT$^*$-open. 
    Therefore $\mathcal{D}_k$ is a $G_\delta$ subset of $(\mathcal{L}_M(X), SOT^*)$. 

    We next prove that $\mathcal{D}_k$ is SOT$^*$-dense in $\mathcal{L}_M(X)$. 
    Fix $T_0\in \mathcal{L}_M(X)$,  $x_1,\dotsc,x_s\in X$, $f_1,\dotsc,f_t\in X^*$ and $\epsilon>0$. 
    Without loss of generality, we can assume that $\|T_0\|<M$. 
    It is enough to find $T\in \mathcal{L}_M(X)$ such that 
    \[
        \|(T-T_0)x_i\|<\epsilon,\  i=1,\dotsc,s, \quad
        \|(T^*-T_0^*)f_j\|<\epsilon,\  j=1,\dotsc,t,
    \]
    and for any non-empty open sets, for simplicity, $U:=U_{m_0}, V_1:=U_{m_1}, V_2:=U_{m_2},\dotsc, V_k:=U_{m_k}\subset X$, there exist $z\in U$ and $n\geq 0$ such that $T^nz\in V_1, T^{2n}z\in V_2, \dotsc, T^{kn}z\in V_k$. 
    Choose $\lambda$ such that $\max\{1,\|T_0\| \}<\lambda<M$.
    And choose finitely supported vectors $y_0\in U, y_1\in V_1, y_2\in V_2,\dotsc, y_k\in V_k$. 
    Pick $m$ sufficiently large so that $y_0,y_1,\dotsc,y_k\in E_m:=\operatorname{span}\{e_0,\dotsc,e_{m-1}\}$. 
    Since $P_m\to I$ with respect to SOT$^*$ on both $\ell_p$ and $\ell_q$, we may choose $m$ sufficiently large so that
    \[
        \|P_m T_0P_mx_i- T_0x_i\|<\frac{\epsilon}{2}, \quad 
        M\|(I-P_m)x_i\|<\frac{\epsilon}{2},\ i=1,\dotsc,s,
    \]
    and
    \[
        \|P_m T_0^*P_mf_j- T_0^*f_j\|<\frac{\epsilon}{3}, \quad 
        M\|(I-P_m)f_j\|<\frac{\epsilon}{3},\ j=1,\dotsc,t.
    \]
    Define $A:=P_m T_0|_{E_m}$. 
    Then $A(E_m)\subset E_m$ and $\|A\|\leq \|T_0\|<\lambda<M$. 
    Since $U,V_1,\dotsc,V_k$ are open sets, choose $\delta_0,\delta_1,\dotsc,\delta_{k-1}$ small enough so that 
    \[
        y_0+\delta_0 u_0^{(1)}\in U,
    \]
    \[
        y_r+\delta_r u_0^{(r+1)}\in V_r, \quad r=1,\dotsc,k-1,
    \]
    whenever $u_0^{(1)}, u_0^{(2)}, \dotsc, u_0^{(k)}$ are standard basis vectors supported outside $E_m$. 
    Choose $n\geq 0$ large, to be specified later. 
    Beyond $E_m$, choose $k$ disjoint chains of standard basis vectors 
    \[
        \mathcal{C}_r=\{u_0^{(r)},u_1^{(r)},\dotsc,u_{n-1}^{(r)}\}, \quad r=1,\dotsc,k.
    \]
    For instance, let 
    \[
        u_l^{(r)}=e_{m+(r-1)n+l}, \quad r=1,\dotsc,k, \quad l=0,\dotsc,n-1.
    \]
    Then 
    \[
        \ell_p=E_m\oplus \operatorname{span}\{\mathcal{C}_1\}\oplus \operatorname{span}\{\mathcal{C}_2\}\oplus \dotsb \oplus \operatorname{span}\{\mathcal{C}_k\}\oplus H,
    \]
    where H is the remaining part.
    Define
    \[
        h_r:=\frac{y_r+\delta_r u_0^{(r+1)}-A^ny_{r-1}}{\delta_{r-1}\lambda^{n-1}}, \quad r=1,\dotsc,k-1,
    \]
    and 
    \[
        h_k:=\frac{y_k-A^ny_{k-1}}{\delta_{k-1}\lambda^{n-1}}.
    \]
    Now define $T$ by 
    \[
        T|_{E_m}=A, 
    \]
    \[
        Tu_l^{(r)}=\lambda u_{l+1}^{(r)},\quad r=1,\dotsc,k,  \quad l=0,\dotsc,n-2.
    \]
    \[
        Tu_{n-1}^{(r)}=h_r,
    \]
    and let $T=0$ on all remaining basis vectors. 
    Let $z:=y_0+\delta_0 u_0^{(1)}$. 
    Then $z\in U$. 
    Moreover, we claim that $T^{rn}z\in V_r$ for $r=1,\dotsc,k$. 
    When $r=1$, we have
    \[
        T^nz=T^n(y_0+\delta_0 u_0^{(1)})=A^ny_0+\delta_0 \lambda^{n-1}h_1=y_1+\delta_1 u_0^{(2)} \in V_1.
    \]
    Assume that, for some $1\leq r\leq k-1$, $T^{(r-1)n}z=y_{r-1}+\delta_{r-1}u_0^{(r)}$.
    Then
    \[
        T^{rn}z=T^n(y_{r-1}+\delta_{r-1}u_0^{(r)})=A^ny_{r-1}+\delta_{r-1}\lambda^{n-1}h_r=y_r+\delta_r u_0^{(r+1)}\in V_r.
    \]
    Hence
    \[
        T^{kn}z=T^n(y_{k-1}+\delta_{k-1}u_0^{(k)})=A^ny_{k-1}+\delta_{k-1}\lambda^{n-1}h_k=y_k\in V_k.
    \]
    Thus $U\cap T^{-n}V_1\cap T^{-2n}V_2\cap \dotsc T^{-kn}V_k \neq \emptyset$. 
    
    We next check that $\|T\|\leq M$. 
    Write $T=B+R$, where $B$ is defined similarly to T, except that the ends of the $k$ chains are set to 0, and $R$ satisfies $R(u_{n-1}^{(r)})=h_r$ for $r=1,\dotsc,k$, and is zero elsewhere. 
    Since $\|B\|=\max \{\|A\|,\lambda\}=\lambda$, we have
    \[
        \|T\|\leq \lambda+\sum_{r=1}^k\|h_r\|.
    \] 
    Since $\|A\|< \lambda$, we have that for each fixed $y_r$,
    \[
        \frac{\|A^ny_r\|}{\lambda^{n-1}}\to 0, \quad r=1,\dotsc,k.
    \]
    Therefore $\sum_{r=1}^k\|h_r\|\to 0 (n\to \infty)$.
    Hence we may choose $n$ sufficiently large such that $\sum_{r=1}^k\|h_r\|\leq M-\lambda$. 
    Thus $\|T\|\leq \|B\|+\|R\|\leq M$. 
    
    It remains to show the approximation of $T_0$ for SOT and SOT$^*$.
    For $i=1,\dotsc,s$, since $x_i=P_mx_i+(I-P_m)x_i$, 
    \begin{align*}
    \|(T-T_0)x_i\|&= 
    \|AP_mx_i-T_0x_i+T(I-P_m)x_i)\| \\
    &\leq  \|(P_m T_0P_mx_i- T_0x_i\|+\|T(I-P_m)x_i\| \\
    &\leq \frac{\epsilon}{2}+\frac{\epsilon}{2}=\epsilon.
    \end{align*}
    Furthermore, for each $g\in P_mX^*$, since $A=P_mT_0|_{E_m}$, $A^*g=P_m T_0^*P_mg$. 
    Moreover, since $(T^*g)x=g(Tx)$, $x\in X$, we have that
    $T^*g=A^*g+\sum_{r=1}^k g(h_r)(u_{n-1}^{(r)})^*$. 
    Hence 
    \[
        \|T^*g-P_m T_0^*P_mg\|\leq \|g\|\sum_{r=1}^k\|h_r\|.
    \]
    Let $C:=\max \{1,\|f_1\|,\dotsc,\|f_t\|\}$.
    Since $\sum_{r=1}^k\|h_r\|\to 0$, we may also require $n$ sufficiently large so that $C \sum_{r=1}^k\|h_r\|<\frac{\epsilon}{3}$. 
    Thus 
    \[
        \|T^* P_mf_j-P_mT_0^*P_mf_j\|<\frac{\epsilon}{3}, \quad j=1,\dotsc,t.
    \]
    Therefore, since $f_j=P_mf_j+(I-P_m)f_j$, 
    \begin{align*}
    \|(T^*-T_0^*)f_j\|&\leq 
    \|(T^*P_mf_j-T_0^*f_j\|+\|T^*(I-P_m)f_j)\| \\
    &\leq \|(T^*P_mf_j-P_m T_0^*P_mf_j\|+ \|P_mT_0^*P_mf_j-T_0^*f_j\|+\|T^*(I-P_m)f_j\| \\
    &\leq \frac{\epsilon}{3}+\frac{\epsilon}{3}+\frac{\epsilon}{3}=\epsilon.
    \end{align*}
    
    Therefore, for every fixed $k\geq 2$, the set $\mathcal{D}_k$ is residual in $(\mathcal{L}_M(X),SOT^*)$. 
    Hence $\operatorname{Mul-DHC}_M(X)=\bigcap_{k\geq 2}\mathcal{D}_k$ is residual in $(\mathcal{L}_M(X),SOT^*)$. 
    Since dense d-hypercyclicity implies d-hypercyclicity, the final assertion follows.
\end{proof}

We next strengthen this typical d-hypercyclicity by prescribing the common vector. 
Thus, for a fixed non-zero vector $x\in X$, we require the diagonal vector $(x,\dotsc,x)$ itself to be hypercyclic for $T\oplus T^2\oplus \dotsb \oplus T^k$ for every $k\geq 2$.

\begin{thm}
    Let $X=\ell_p$, $1<p<\infty$, $M>1$. 
    Then for every non-zero vector $x$ in $X$,
    \[
        \mathcal{DHC}_{x}:=\{T\in \mathcal{L}_M(X) \colon(x,\dotsc,x)\in \operatorname{HC}(T\oplus T^2\oplus \dotsb \oplus T^k), \forall k\geq 2 \}
    \]
    is residual in $(\mathcal{L}_M(X), SOT^*)$.
\end{thm}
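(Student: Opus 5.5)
We fix $k\geq 2$ and prove that
\[
\mathcal{DHC}_{x,k}:=\{T\in\mathcal{L}_M(X)\colon (x,\dotsc,x)\in \operatorname{HC}(T\oplus T^2\oplus\dotsb\oplus T^k)\}
\]
is a dense $G_\delta$ subset of $(\mathcal{L}_M(X),SOT^*)$. Then $\mathcal{DHC}_x=\bigcap_{k\geq 2}\mathcal{DHC}_{x,k}$ is residual.

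\emph{The $G_\delta$ part.} With a countable basis $(U_m)_{m\geq1}$ of $X$ we can write
\[
\mathcal{DHC}_{x,k}=\bigcap_{i_1,\dotsc,i_k\geq1}\bigcup_{n\geq0}\{T\colon T^nx\in U_{i_1},\dotsc,T^{kn}x\in U_{i_k}\}.
\]
For fixed $n$ and $j$ the map $T\mapsto T^{jn}x$ is SOT-continuous on $\mathcal{L}_M(X)$, by the telescoping estimate used in Proposition \ref{WMIX-dense-l^p}. Hence each inner set is SOT$^*$-open, and $\mathcal{DHC}_{x,k}$ is $G_\delta$.

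\emph{Density: first step.} Theorem \ref{Mul-DHC-residual} says that $\mathcal{D}_k$ is residual. The same argument with $\lambda<r<M$ in place of $M$ should show that operators $R$ with $\|R\|<M$ and $(R,\dotsc,R^k)$ densely d-hypercyclic are SOT$^*$-dense in $\mathcal{L}_M(X)$. The scaling trick of Proposition \ref{fixed-vector-MIX-dense-l^p} gives this: scale $T_0$ by $c<1$, then apply Theorem \ref{Mul-DHC-residual} inside $\mathcal{L}_r(X)$. So, given a basic SOT$^*$-neighbourhood $U$, pick such an $R\in U$. As in the proof of the prescribed-diagonal proposition above, choose $\eta>0$ so that $\|Q-R\|<\eta$ implies $Q\in U$ and $\|Q\|<M$.

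\emph{Density: similarity step.} Since $\operatorname{dHC}(R,R^2,\dotsc,R^k)$ is dense, we can pick $z$ in it arbitrarily close to $x$. Take $g\in X^*$ with $g(x)=1$ and set $A=I+g(\cdot)(z-x)$. Then $Ax=z$, and $A$ is invertible with $\|A-I\|\leq\|g\|\|z-x\|$ small, so $\|A^{-1}RA-R\|<\eta$. Put $T=A^{-1}RA$. Then $T^{jn}x=A^{-1}R^{jn}z$ for every $j,n$. Since $A^{-1}\oplus\dotsb\oplus A^{-1}$ is a homeomorphism of $X^k$, the orbit $\{(T^nx,\dotsc,T^{kn}x)\}$ is dense in $X^k$. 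Hence $T\in U\cap\mathcal{DHC}_{x,k}$.

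The main obstacle is the first density step: we need $\|R\|<M$ strictly, because the similarity perturbation must stay inside $\mathcal{L}_M(X)$. Theorem \ref{Mul-DHC-residual} only gives density in $\mathcal{L}_M(X)$ itself. I would first try the rescaling argument, i.e. apply Theorem \ref{Mul-DHC-residual} with $M$ replaced by some $r<M$. Alternatively, I would observe directly that its construction yields $\|T\|\leq\lambda+\sum_r\|h_r\|$, which can be made $<M$ by taking $n$ larger.
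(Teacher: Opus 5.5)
Your proposal is correct and follows essentially the same route as the paper. Both arguments get the $G_\delta$ property from continuity of $T\mapsto T^{jn}x$. Both obtain density by taking $R\in U$ with $\|R\|<M$ and $(R,\dotsc,R^k)$ densely d-hypercyclic, via Theorem \ref{Mul-DHC-residual} applied in $\mathcal{L}_r(X)$, and then conjugating by $A=I+g(\cdot)(z-x)$. Your explicit scaling by $c<1$ simply makes precise the paper's ``without loss of generality $\|T_0\|<M$''.
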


\begin{proof}
    Fix $k\geq 2$, and choose a countable basis of non-empty open subsets $(U_j)_{j\geq 1}$ of $X^k$.
    Set
    \[
        \mathcal{DHC}_{x,k}=\{T\in \mathcal{L}_M(X) \colon (x,\dotsc,x)\in \operatorname{HC}(T\oplus T^2\oplus \dotsb \oplus T^k)\}.
    \]
    We observe that 
    \[
        \mathcal{DHC}_{x,k}=\bigcap_{j\geq 1}\bigcup_{n\geq 0}\{T\in \mathcal{L}_M(X) \colon(T^nx,T^{2n}x,\dotsc,T^{kn}x)\in U_j\}.
    \]
    For every fixed $n\geq 0$, the map $T\to (T^nx,T^{2n}x,\dotsc,T^{kn}x)$ is SOT$^*$-continuous.
    Hence each set $\{T\in \mathcal{L}_M(X) \colon(T^nx,T^{2n}x,\dotsc,T^{kn}x)\in U_j\}$ is SOT$^*$-open, and therefore $\mathcal{DHC}_{x,k}$ is a $G_\delta$ subset of $\mathcal{L}_M(X)$.

    We next prove that $\mathcal{DHC}_{x,k}$ is SOT$^*$-dense in $\mathcal{L}_M(X)$.
    Let $U$ be a non-empty SOT$^*$-open subset of $\mathcal{L}_M(X)$. 
    Choose $T_0\in U$. 
    Without loss of generality, we can assume that $\|T_0\|<M$.
    Choose $r$ satisfying $\max\{1,\|T_0\|\}<r<M$.
    By Theorem \ref{Mul-DHC-residual}, there exists $R\in U$ such that $(R,R^2,\dotsc,R^k)$ is densely d-hypercyclic and $\|R\|\leq r<M$. 
    Since $x \neq 0$, there exists a functional $g\in X^*$ such that $g(x)=1$. 
    Since $(R,R^2,\dotsc,R^k)$ is densely d-hypercyclic, we may choose $z\in \operatorname{dHC}(R,R^2,\dotsc, R^k)$ sufficiently close to $x$. 
    Define $A=I+K$, where $K \colon X\to X$ is defined by $K(v)=g(v)(z-x)$.
    Then $A(x)=z$ and $\|A-I\|=\|K\|\leq \|g\|\cdot \|z-x\|\to 0$ as $z\to x$.
    Hence for $z$ sufficiently close to $x$, $A$ is invertible and $A^{-1}\to I$, $A^{-1}RA\to R$ with respect to the norm topology. 
    Define
    \[
        T=A^{-1}RA.
    \]
    Choose $z$ sufficiently close to $x$ while also satisfying $\|T-R\|<\eta$ for a suitable $\eta$. 
    Therefore $T\in U$ and $\|T\|<M$.
    For each $n\geq 0$, 
    \begin{align*}
        (T\oplus T^2\oplus \dotsb \oplus T^k)^n(x,\dotsc,x)&=(A^{-1})^{\oplus k}(R\oplus R^2\oplus \dotsb \oplus R^k)^n A^{\oplus k}(x,\dotsc,x)\\
        &=(A^{-1})^{\oplus k}(R\oplus R^2 \oplus\dotsb \oplus R^k)^n(z,\dotsc,z).
    \end{align*}
    Since $z\in \operatorname{dHC}(R,R^2,\dotsc, R^k)$ and $(A^{-1})^{\oplus k}$ is a homeomorphism of $X^k$, 
    $x\in \operatorname{dHC}(T,T^2,\dotsc, T^k)$.
    Then $T\in U\cap \mathcal{DHC}_{x,k}$.
    Therefore $\mathcal{DHC}_{x,k}$ is residual in $(\mathcal{L}_M(X), SOT^*)$.
    Hence $\mathcal{DHC}_{x}=\bigcap_{k\geq 2}\mathcal{DHC}_{x,k}$ is residual in $(\mathcal{L}_M(X),SOT^*)$. 
\end{proof}

\begin{rem}\label{section4-c0-l1}
    The proofs in this section are based on the same finite-dimensional approximation argument as in the preceding section, together with the $G_\delta$ descriptions of weak disjointness, disjoint hypercyclicity, and their prescribed-vector variants. 
    Consequently, the corresponding results remain valid for $X=c_0$ with respect to SOT$^*$.
    For $X=\ell_1$, the corresponding results hold with SOT in place of SOT$^*$.
\end{rem}

We can now complete the proof of the first main result stated in the introduction.

\begin{proof}[Proof of Theorem \ref{main-result-1}]
    Assertion (1) is exactly Proposition \ref{WMIX-dense-l^p}.
    Assertion (2) follows from Theorem \ref{syndetic-meager-l^p}.
    Assertion (3) is Proposition \ref{WD_M(X)-residual-lp}.
    Finally, assertion (4) is precisely Theorem \ref{Mul-DHC-residual}.
    This proves Theorem \ref{main-result-1}.
\end{proof}

We end this section with the following natural question:

\begin{ques}
For which Banach spaces do the preceding results regarding the typicality of dynamical properties hold?
\end{ques}

%\section{\texorpdfstring{$I+B_w$}{I+Bw} type operators}
\section{Operators of the form \texorpdfstring{$I+B_w$}{I+Bw}}

In this section, we prove Theorem \ref{main-result-2}.
We characterize the typical dynamical properties of a special class of operators, $\mathcal{M}=\{I+B_w\in \mathcal{L}(X) \colon w\in c_0(\mathbb{Z})\}$, endowed with the norm topology.
These include weak mixing, topological ergodicity, mild mixing, weak disjointness and disjoint hypercyclicity of powers. 

Recall that Rodr\'iguez-Mart\'inez in \cite{RA12} established that hypercyclicity of both $I+B_w$ and its adjoint $I+B_w^*$ is a typical property among weights $w\in c_0(\mathbb{Z})$.
The key technical ingredient of the proof of this Proposition is the following lemma. 

\begin{lem}[{\cite[Lemma 2.2]{RA12}}]\label{kerT}
    Let $T$ be a continuous operator on a topological vector space $X$. 
    Assume that $x$ and $y$ belong to $\ker^\dagger T:=\operatorname{span}\Bigl(\bigcup_{n=1}^\infty (T^n(X)\cap \ker T^n)\Bigr)$. Then there exists a sequence $\{x_k\}$ in $X$ such that $x_k\to x$ and $(I+T)^kx_k\to y$ as $k\to \infty$.
\end{lem}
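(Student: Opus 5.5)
We prove Lemma~\ref{kerT} by first treating a single generator and then using linearity. The key observation is that for a vector $u$ annihilated by a power of $T$, the binomial expansion of $(I+T)^k u$ is a finite sum. Since $\ker^\dagger T$ is the span of the sets $T^n(X)\cap\ker T^n$, the whole statement reduces to the following. For every $n\geq 1$ and every $v\in T^n(X)\cap \ker T^n$, we will find sequences $(a_k)$ and $(b_k)$ in $X$ with
\[
a_k\to v,\quad (I+T)^ka_k\to 0,\qquad b_k\to 0,\quad (I+T)^kb_k\to v.
\]
Granting this, write $x=\sum_i v_i$ and $y=\sum_j v'_j$ as finite sums of such generators, take the corresponding $a^{(i)}_k$ and $b^{(j)}_k$, and set $x_k=\sum_i a^{(i)}_k+\sum_j b^{(j)}_k$. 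Since $(I+T)^k$ is linear and all sums are finite, $x_k\to x$ and $(I+T)^kx_k\to y$.

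Fix $v=T^nw\in\ker T^n$. Then $T^{2n}w=0$, so $T^jw=0$ for $j\geq 2n$, and for $k\geq 2n$
\[
(I+T)^k w=\sum_{j=0}^{2n-1}\binom{k}{j}T^jw.
\]
The idea is to look for a polynomial correction $p_k(T)w$, with $p_k$ of degree less than $2n$ and scalar coefficients, whose image under $(I+T)^k$ is a prescribed element of $\operatorname{span}\{T^jw\}$. Since everything lives in the finite-dimensional $T$-invariant space $F=\operatorname{span}\{w,Tw,\dots,T^{2n-1}w\}$, on which $N=T|_F$ is nilpotent, we can compute inside $F$ with exact inverses. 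For $b_k$, set
\[
b_k=(I+N)^{-k}v=\sum_{j=0}^{n-1}\binom{-k}{j}T^{n+j}w .
\]
This gives $(I+T)^kb_k=v$ exactly, but the coefficients $\binom{-k}{j}$ grow polynomially in $k$, so $b_k\not\to 0$ in general. This is the main obstacle: exact inversion is too naive, and we need some freedom to kill the growth.

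To get that freedom we exploit the extra room $v\in T^n(X)$, and in fact we would use $v=T^{n}w$ with $w$ replaced by a suitable element of higher nilpotency. The plan is to choose scalars $c_{k,0},\dots,c_{k,m}$ and set
\[
b_k=\sum_j c_{k,j}T^{j}w,\qquad (I+T)^kb_k=\sum_i\Bigl(\sum_{j\leq i}\binom{k}{i-j}c_{k,j}\Bigr)T^iw .
\]
We then impose a triangular linear system: the coefficient of $T^nw$ equals $1$, the coefficients of $T^iw$ for $i<n$ vanish, and $c_{k,j}=O(1/k)$. Taking $c_{k,j}=0$ for $j<n-1$ and $c_{k,n-1}=1/k$ makes the coefficient of $T^nw$ in the image equal to $\binom{k}{1}/k=1$, while $b_k\to 0$. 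The higher-order error terms $\binom{k}{i-n+1}k^{-1}T^{i}w$ for $i>n$ must then be cancelled by further small coefficients $c_{k,j}$ with $j\geq n$. Since $T^{n}w\in\ker T^n$ these terms live in a lower nilpotent block, and we would handle them by induction on $n$, applying the same lemma to the element $T^{n+1}w\in T^{n}(X)\cap\ker T^{n-1}$. The vectors $a_k=v-b'_k$, where $(I+T)^kb'_k=(I+T)^kv$ with $b'_k\to 0$, are constructed analogously.

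Verifying that this inductive cancellation closes up, so that all coefficients stay $o(1)$ while the image is exactly $v$, is the step we expect to be delicate. If it fails we would fall back on the original argument of \cite{RA12}.
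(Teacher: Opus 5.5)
\textbf{There is a genuine gap.} Your reduction to a single generator $v=T^nw$ with $T^nv=0$, followed by linearity, is fine. But the single-generator construction, which is the whole content of the lemma, is never carried out, and the plan as written cannot succeed.

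First, nothing can be achieved \emph{exactly}. On the finite-dimensional invariant space $F$ the operator $(I+T)^k$ is invertible. So $(I+T)^kb_k=v$ with $b_k\in F$ forces $b_k=(I+N)^{-k}v$, which you already saw does not tend to $0$. Likewise $(I+T)^kb'_k=(I+T)^kv$ forces $b'_k=v$. Your triangular system (coefficients of $T^iw$, $i<n$, equal to $0$ and that of $T^nw$ equal to $1$) forces $c_{k,0}=\dots=c_{k,n-1}=0$ and $c_{k,n}=1$, so $b_k\not\to 0$. Only convergence can be demanded.

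Second, the ansatz $c_{k,j}=0$ for $j<n-1$, $c_{k,n-1}=1/k$ cannot be repaired by small corrections at indices $j\ge n$. The coefficient of $T^{n+1}w$ in $(I+T)^kb_k$ is $\binom{k}{2}k^{-1}+kc_{k,n}+c_{k,n+1}=\frac{k-1}{2}+o(k)$ as soon as $c_{k,n},c_{k,n+1}\to0$. So for $n\ge2$ and $T^{2n-1}w\neq0$ (so that $w,\dots,T^{2n-1}w$ are independent) the image diverges.

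Induction on $n$ does not rescue this. The error terms $\binom{k}{i-n+1}k^{-1}T^iw$ grow like $k^{i-n}$, while the lemma only approximates \emph{fixed} targets. Cancelling them would require quantitative decay rates that the statement does not provide. Rescaling a correction by $k^{i-n}$ also amplifies its errors in the lower coefficients.

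The missing idea is that the room $v\in T^n(X)$ must be spent at the bottom of the chain. Every $c_{k,j}$ with $0\le j\le n-1$ has to be nonzero, of order $k^{j-n}$, with the leading parts cancelling all growing terms simultaneously. The paper quotes this lemma from \cite{RA12} without proof, but its own Newton-basis technique from Lemma~\ref{kerT-k-disjoint} does exactly this.

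Put $u_m=T^{2n-1-m}w$ for $0\le m\le 2n-1$, so $Tu_0=0$, $Tu_m=u_{m-1}$ and $v=u_{n-1}$. Define $\Psi\colon\mathcal{P}_{2n-1}\to X$ linearly by $\Psi\bigl(\binom{t}{m}\bigr)=u_m$. No independence of the $u_m$ is needed, $\Psi$ is continuous because its domain is finite-dimensional, and Pascal's identity gives $(I+T)^k\Psi(P)=\Psi(P(\cdot+k))$. For $\deg Q_0,\deg Q_1\le n-1$ set
\[
P_k(t)=\Bigl(1-\frac tk\Bigr)^nQ_0(t)+\Bigl(\frac tk\Bigr)^nQ_1(t-k),\qquad \deg P_k\le 2n-1 .
\]
Since $k^{-n}Q_i(t\pm k)$ has coefficients $O(k^{-1})$, you get $P_k\to Q_0$ coefficientwise. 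Also $P_k(t+k)=(-t/k)^nQ_0(t+k)+(1+t/k)^nQ_1(t)\to Q_1$ coefficientwise. Hence $\Psi(P_k)\to\Psi(Q_0)$ and $(I+T)^k\Psi(P_k)\to\Psi(Q_1)$. The choices $(Q_0,Q_1)=(0,\binom{t}{n-1})$ and $(\binom{t}{n-1},0)$ give your $b_k$ and $a_k$.

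This also explains the hypothesis: $T^n(X)\cap\ker T^n$ provides a chain of length $2n$ with $v$ at height $n-1$, exactly the degree budget $n+(n-1)$ of the interpolation.
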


We will now use this tool to show that the set of weights $w\in c_0(\mathbb{Z})$ for which both $I+B_w$ and $(I+B_w)^*$ are weakly mixing is a dense $G_\delta$ subset of $c_0(\mathbb{Z})$.

\begin{thm}\label{FSHC_r-star-dense-Gdelta}
    Let $X=\ell_p(\mathbb{Z})$, $1<p<\infty$, and $\mathcal{M}=\{I+B_w \colon w\in c_0(\mathbb{Z})\}\subset (\mathcal{L} (X),\|\cdot\|)$, endowed with the norm topology. Then
    \[
        \operatorname{WMIX}_{I+B}^*:=\{I+B_w \in \mathcal{M} \colon(I+B_w) \text{ and }(I+B_w)^* \text{ are weakly mixing}\}
    \]
    is a dense $G_\delta$ subset of $\mathcal{M}$.
\end{thm}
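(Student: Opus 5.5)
We identify $\mathcal{M}$ with $c_0(\mathbb{Z})$ via $w\mapsto I+B_w$. Since $\|B_w-B_v\|=\|w-v\|_\infty$, this map is an isometry onto $\mathcal{M}$ with the norm topology, so $\mathcal{M}$ is a Polish space. The proof splits into a $G_\delta$ part and a density part.

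\emph{$G_\delta$ part.} Fix countable bases $(U_m)$ of $\ell_p(\mathbb{Z})$ and $(V_m)$ of $\ell_q(\mathbb{Z})$. Weak mixing of $I+B_w$ means that $(I+B_w)\oplus(I+B_w)$ is hypercyclic on $X^2$. So, exactly as in Proposition \ref{WMIX-dense-l^p}, the set of $w$ with $I+B_w$ weakly mixing is
\[
\bigcap_{a,b,c,d}\bigcup_{n\geq 0}\{w\colon (I+B_w)^n(U_a)\cap U_c\neq\emptyset,\ (I+B_w)^n(U_b)\cap U_d\neq\emptyset\}.
\]
Each inner set is open, because $w\mapsto (I+B_w)^n$ is norm continuous. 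The adjoint map $w\mapsto (I+B_w)^*=I+B_w^*$ is also norm continuous into $\mathcal{L}(\ell_q(\mathbb{Z}))$. The same formula with the $V_m$ therefore shows that $\{w\colon (I+B_w)^* \text{ weakly mixing}\}$ is $G_\delta$, and so is the intersection of the two sets.

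\emph{Density part.} By the Baire category theorem it suffices to show that each of the two $G_\delta$ sets is dense. Moreover, the adjoint $B_w^*$ is a weighted forward shift on $\ell_q(\mathbb{Z})$. After the flip $n\mapsto -n$ it becomes a weighted backward shift $B_{\tilde w}$ with $\tilde w_n=w_{1-n}$, and $w\mapsto\tilde w$ is an isometry of $c_0(\mathbb{Z})$. Hence the set where $(I+B_w)^*$ is weakly mixing is the image of the analogous set for $\ell_q$ under a homeomorphism. It is therefore enough to prove, for every $1<p<\infty$, that $\{w\colon I+B_w \text{ weakly mixing}\}$ is dense.

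Given $w\in c_0(\mathbb{Z})$ and $\epsilon>0$, choose $N$ with $|w_n|<\epsilon$ for $|n|>N$. Define $v$ by $v_n=w_n$ for $|n|\leq N$, $v_n=0$ for $n\in\{N+1,-N-1\}$ (and possibly at a few more sites), and $v_n$ small nonzero elsewhere. Then $\|v-w\|<2\epsilon$.

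The idea is to make $\ker^\dagger B_v$ dense using Lemma \ref{kerT}. If $v_{j}=0$, then $B_v$ splits into pieces between the zero sites, and on a finite block between two zeros $B_v$ is nilpotent. The strategy is to choose $v\in c_0$ with zeros at a sparse two-sided sequence of sites $j_k\to\pm\infty$ (for example $v_{j_k}=0$ for all $k$) and $v_n\neq0$ elsewhere. Then every $e_n$ lies in a finite block $[j_k,j_{k+1}-1]$ on which $B_v$ acts as a nilpotent weighted shift. Each such $e_n$ with nonzero weights ahead belongs to $B_v^m(X)\cap\ker B_v^m$ for large $m$, and $e_{j_{k+1}-1}$ is reached by pulling back from $e_{j_{k+1}-1}$ itself. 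Thus $c_{00}\subset \ker^\dagger B_v$, which is dense.

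Lemma \ref{kerT} then gives, for $x,y\in c_{00}$, vectors $x_k\to x$ with $(I+B_v)^kx_k\to y$. Applying this simultaneously to two pairs $(x,y)$ and $(x',y')$ with the same times $k$ shows that $N(U_1,V_1)\cap N(U_2,V_2)\neq\emptyset$, so $I+B_v$ is weakly mixing (indeed mixing). The zero sites can be placed outside $[-N,N]$ and only change $w$ where $|w_n|<\epsilon$, so $v$ stays $2\epsilon$-close to $w$.

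I expect the main obstacle to be this density step: verifying carefully that $c_{00}\subset\ker^\dagger B_v$ when the weights have infinitely many zeros. In particular, the last basis vector of each block must still be in the range of a power of $B_v$ intersected with a kernel. If that fails, I would instead use the direct-sum decomposition into finite nilpotent blocks and apply Lemma \ref{kerT} blockwise on the finite-dimensional spaces, where $(I+N)^k$ with $N$ nilpotent is explicitly computable.
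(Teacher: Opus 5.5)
\textbf{There is a genuine gap in your density step.} Your $G_\delta$ part and your reduction of the adjoint case via the flip $n\mapsto -n$ are fine and agree with the paper. The density step fails, and more care will not fix it, because no weight with a zero entry can give a hypercyclic $I+B_v$. If $v_j=0$, then $((I+B_v)x)_{j-1}=x_{j-1}+v_jx_j=x_{j-1}$. So the coordinate $x_{j-1}$ is constant along every orbit, and no orbit is dense.

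Your specific construction also breaks down where you suspected. With zeros at a two-sided sequence $(j_k)$, $B_v$ is a direct sum of nilpotent weighted shifts on the finite blocks $[j_k,j_{k+1}-1]$. Then $B_v^m(X)$ vanishes on the top $m$ sites of every block, while $\ker B_v^m$ lives on the bottom $m$ sites. Hence $\ker^\dagger B_v$ consists of vectors supported in the lower halves of the blocks. In particular $e_{j_{k+1}-1}\notin\ker^\dagger B_v$, since it is not even in the range of $B_v$ (because $v_{j_{k+1}}=0$). Even with only the two zeros at $\pm(N+1)$, the vectors $e_j$ with $0\le j\le L$ are not in $\ker^\dagger B_v$. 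Your blockwise fallback fails for the same reason: projecting a hypercyclic vector onto an invariant finite-dimensional block would give a hypercyclic operator on a finite-dimensional space, which is impossible. Note also that $\ker^\dagger B_v\neq\{0\}$ forces some $v_n=0$. So Lemma \ref{kerT} can never produce a single hypercyclic $I+B_v$.

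\textbf{The missing idea is to let the approximant depend on the open sets.} Do not aim for one weakly mixing operator near $w$. Fix the basic open sets $U_a,U_b,U_c,U_d$ first, and show that the open set
\[
\bigcup_n\{w\colon (I+B_w)^n(U_a)\cap U_c\neq\emptyset,\ (I+B_w)^n(U_b)\cap U_d\neq\emptyset\}
\]
is dense. The Baire category theorem then gives density of the countable intersection. To prove this:
\begin{enumerate}
\item Pick finitely supported vectors in the four sets with supports in $[-L,L]$.
\item Choose $N>L$ with $|w_k|<\epsilon$ for $k<-N$.
\item Set $w'_k=0$ for $k<-N$, and take $w'_k\neq 0$, still $\epsilon$-close to $w_k$, for $k\ge -N$.
\end{enumerate}
Since there is no zero above $-N$, each $e_j$ with $|j|\le L$ lies in $B_{w'}^m(X)$ for every $m$, and in $\ker B_{w'}^m$ for $m>L+N+1$. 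So the chosen vectors lie in $\ker^\dagger B_{w'}$, and Lemma \ref{kerT} with a common time puts $I+B_{w'}$ into the open set. This $I+B_{w'}$ is itself not hypercyclic, and it does not need to be. This is exactly the paper's argument.
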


\begin{proof}
We first note that, for $w,v\in c_0(\mathbb{Z})$, $\|B_w-B_v\|_{\mathcal{L}(\ell_p)}=\|w-v\|_\infty$.
Hence $w\mapsto I+B_w$ is an isometry from $c_0(\mathbb{Z})$ onto $\mathcal{M}$. In particular, $\mathcal{M}$ is a Baire space.
Let $(U_m)_{m\geq 1}$ be a countable basis of $X^2$. 
For $a,b,n\geq 1$, define
\[
D_{a,b,n}=\left\{ T\in\mathcal{M} \colon
(T\oplus T)^n(U_a)\cap U_b\neq \emptyset \right\}.
\]
For fixed $n$, the map $T\mapsto (T\oplus T)^n$ is continuous in the norm topology. 
Hence each $D_{a,b,n}$ is open in $\mathcal{M}$.
Observe that 
\[
\operatorname{WMIX}_{I+B}:=\{I+B_w \in \mathcal{M}:(I+B_w)\text{ is weakly mixing on }X \}=\bigcap_{a,b\geq 1} \bigcup_{n\geq 1}D_{a,b,n}.
\]
Thus $\operatorname{WMIX}_{I+B}$ is a $G_\delta$ subset of $\mathcal{M}$.

We next show that $\operatorname{WMIX}_{I+B}$ is dense.
Fix $a,b\geq 1$. 
We just need to show that $\bigcup_{n\geq 1}D_{a,b,n}$ is dense in $\mathcal{M}$. 
Let $S_w=I+B_w\in\mathcal{M}$ and let $\epsilon>0$. 
Since $c_{00}(\mathbb{Z})^2$ is dense in $X^2$, choose
\[
x=(x_1,x_2)\in U_a\cap c_{00}(\mathbb{Z})^2, \quad
y=(y_1,y_2)\in U_b\cap c_{00}(\mathbb{Z})^2.
\]
Choose $L\in\mathbb{N}$ such that $(\operatorname{supp}x_i)\cup (\operatorname{supp}y_i)\subset [-L,L]$ for $i=1,2$.
Since $w\in c_0(\mathbb{Z})$, choose $N>L$ such that $\sup_{k<-N}|w_k|<\frac{\epsilon}{2}$.
We now choose $w'\in c_0(\mathbb{Z})$ such that 
\[
    \|w'-w\|_\infty<\epsilon,\quad \text{ and } \quad
    \begin{cases}
    w'_k\neq 0, &k\geq -N,\\
    w'_k=0, &k<-N.
    \end{cases}
\]
We claim that every $z\in c_{00}(\mathbb{Z})$ with $\operatorname{supp}z\subset [-L,L]$ belongs to $\ker^\dagger B_{w'}$.
Indeed, choose $n>L+N+1$. 
For every $j\in[-L,L]$, $B_{w'}^n e_j$ contains a factor $w'_\ell$ with $\ell<-N$. Hence $B_{w'}^n e_j=0$.
Thus $e_j\in\ker B_{w'}^n$. 
Moreover,
\[
B_{w'}^n e_{j+n}=\left(w'_{j+n}w'_{j+n-1}\cdots w'_{j+1}\right)e_j.
\]
Since $j\geq -L>-N$, $j+1,\dotsc,j+n$ are at least $-N$, and therefore all corresponding weights are non-zero. 
Hence $e_j\in B_{w'}^n(X)$.
Then $e_j\in B_{w'}^n(X)\cap \ker B_{w'}^n$.
So $z\in \ker^\dagger B_{w'}$. 
In particular,
\[
x_i,y_i\in \ker^\dagger B_{w'},
\quad i=1,2.
\]
By Lemma \ref{kerT}, for each $i=1,2$, there exists a sequence $(x_{i,k})_{k\geq 1}\subset X$ such that
\[
x_{i,k}\to x_i, \quad 
(I+B_{w'})^k x_{i,k}\to y_i.
\]
Since there are only two coordinates, we may choose a common $k$ sufficiently large so that 
\[
(x_{1,k},x_{2,k})\in U_a, \quad
\bigl((I+B_{w'})^k x_{1,k},(I+B_{w'})^k x_{2,k}\bigr)\in U_b.
\]
Then $I+B_{w'}\in D_{a,b,k}$. 
Hence $\bigcup_{n\geq 1}D_{a,b,n}$ is dense in $\mathcal{M}$.  
Therefore $\operatorname{WMIX}_{I+B}$ is a dense $G_\delta$ subset of $\mathcal{M}$.

Define $U:\ell_q(\mathbb{Z})\to \ell_q(\mathbb{Z})$ by $Ue_n=e_{-n}$ for $n\in \mathbb{Z}$. Then $U^{-1}=U$. 
Since $B_we_n=w_ne_{n-1}$, $B_w^*e_n=\overline{w_{n+1}}e_{n+1}$.
Hence, for $n\in \mathbb{Z}$, 
\[
     UB_w^*Ue_n=UB_w^*e_{-n}=U(\overline{w_{-n+1}}e_{-n+1})=\overline{w_{-n+1}}e_{n-1}.
\]     
Next define $\Gamma:c_0(\mathbb{Z})\to c_0(\mathbb{Z})$ by $(\Gamma w)_n=\overline{w_{1-n}}$ for $n\in \mathbb{Z}$. 
Then $B_{\Gamma w}e_n=(\Gamma w)_ne_{n-1}=\overline{w_{-n+1}}e_{n-1}$.
It follows that $UB_w^*U=B_{\Gamma w}$ on $\ell_q(\mathbb{Z})$.
Then $U(I+B_w)^*U=I+B_{\Gamma w}$.
Since weak mixing is invariant under similarity, $(I+B_w)^*$ is weakly mixing if and only if $I+B_{\Gamma w}$ is  weakly mixing. 
Moreover, $\Gamma$ is an isometric homeomorphism of $c_0(\mathbb{Z})$. Indeed, 
\[
    \|\Gamma w\|_\infty=\sup_{n\in \mathbb{Z}}|w_{1-n}|=\|w\|_\infty,
\]
and $\Gamma^2=I$.
From the proof above, we know that the set 
\[
    \operatorname{WMIX}_{I+B}:=\{I+B_w \in \mathcal{M} \colon I+B_w\text{ is weakly mixing}\}
\]
is a dense $G_\delta$ subset of $\mathcal{M}$. 
Then $\{I+B_w \in \mathcal{M} \colon(I+B_w)^*\text{ is weakly mixing }\}$ is a dense $G_\delta$ set. 
Since $\mathcal{M}$ is a Baire space, $\operatorname{WMIX}_{I+B}^*$ is a dense $G_\delta$ subset of $\mathcal{M}$. 
\end{proof}

Although weak mixing is generic in $\mathcal{M}$, the stronger bounded gap return time property is not. 
We next prove that the set of topologically ergodic operators of the form $I+B_w$ is meager in $\mathcal{M}$.

\begin{prop}\label{I+Bw-TERG-meager}
    Let $X=\ell_p(\mathbb{Z})$, $1< p<\infty$ and $\mathcal{M}=\{I+B_w \colon w\in c_0(\mathbb{Z})\}\subset (\mathcal{L} (X),\|\cdot\|)$, endowed with the norm topology. Then 
    \[
        \operatorname{TERG}_{I+B}:=\{I+B_w \in \mathcal{M} \colon I+B_w \text{ is topologically ergodic}\}
    \]
    is meager in $\mathcal{M}$.
\end{prop}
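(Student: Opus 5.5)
The plan mirrors the argument of Theorem \ref{syndetic-meager-l^p}, with Theorem \ref{FSHC_r-star-dense-Gdelta} playing the role that Proposition \ref{WMIX-dense-l^p} played there. By Theorem \ref{T-TERG-T*-not-hyper}, if $I+B_w$ is topologically ergodic on $X=\ell_p(\mathbb{Z})$, then its adjoint $(I+B_w)^*$ on $X^*=\ell_q(\mathbb{Z})$ is not hypercyclic. This gives the inclusion
\[
\operatorname{TERG}_{I+B}\subset \mathcal{M}\setminus \{I+B_w\in\mathcal{M}\colon (I+B_w)^* \text{ is hypercyclic}\}.
\]

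Next, Theorem \ref{FSHC_r-star-dense-Gdelta} shows that $\operatorname{WMIX}_{I+B}^*$ is a dense $G_\delta$ subset of $\mathcal{M}$. Every element of this set has a weakly mixing, hence hypercyclic, adjoint. It follows that
\[
\mathcal{A}:=\{I+B_w\in\mathcal{M}\colon (I+B_w)^*\text{ is hypercyclic}\}
\]
contains a dense $G_\delta$ subset of $\mathcal{M}$, so $\mathcal{A}$ is residual. Since $w\mapsto I+B_w$ is an isometry of $c_0(\mathbb{Z})$ onto $\mathcal{M}$, the space $\mathcal{M}$ is a Baire space, so residuality is meaningful here. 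The complement $\mathcal{M}\setminus\mathcal{A}$ is then meager, and by the inclusion above so is $\operatorname{TERG}_{I+B}$.

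There is no serious obstacle. The only point to check is that the adjoint in Theorem \ref{T-TERG-T*-not-hyper} is the Banach adjoint on $\ell_q(\mathbb{Z})$. This is the same operator treated in the second half of the proof of Theorem \ref{FSHC_r-star-dense-Gdelta}, via the similarity $U(I+B_w)^*U=I+B_{\Gamma w}$.

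If one also wants to record that $\operatorname{TERG}_{I+B}$ is Borel, one can repeat the description from Theorem \ref{syndetic-meager-l^p}:
\[
\operatorname{TERG}_{I+B}=\bigcap_{r,s}\bigcup_{L}\bigcap_{m}\bigcup_{k=m}^{m+L}\{T\in\mathcal{M}\colon T^k(U_r)\cap U_s\neq\emptyset\}.
\]
Each set in this description is norm-open, since $T\mapsto T^k$ is norm continuous. Borelness is not required for the meagerness claim.
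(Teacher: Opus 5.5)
Your proposal is correct and follows the paper's proof essentially verbatim. Moothathu's theorem (Theorem \ref{T-TERG-T*-not-hyper}) places $\operatorname{TERG}_{I+B}$ inside the complement of the set of operators with hypercyclic adjoint, and Theorem \ref{FSHC_r-star-dense-Gdelta} makes that set residual, so its complement is meager. Your wording that this set \emph{contains} a dense $G_\delta$, rather than \emph{is} one as the paper states, is slightly more careful than the paper.
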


\begin{proof}
    From the proof of Theorem \ref{syndetic-meager-l^p}, we have that 
    \[
        \{I+B_w \colon I+B_w \text{ is topologically ergodic }\}\subset \mathcal{M}\setminus \{I+B_w \colon(I+B_w)^* \text{ is hypercyclic on }\ell_q(\mathbb{Z})\}.
    \]
    By Theorem \ref{FSHC_r-star-dense-Gdelta}, we have that 
    \[
        \{I+B_w \colon(I+B_w)^* \text{ is hypercyclic on }\ell_q(\mathbb{Z})\}
    \]
    is a dense $G_\delta$ subset of $\mathcal{M}$.
    Hence 
    \[
        \mathcal{M}\setminus \{I+B_w \colon(I+B_w)^* \text{ is hypercyclic }\}=\mathcal{M}\setminus  (\cap_{m=1}^\infty O_m)=\cup_{m=1}^\infty (\mathcal{M}\setminus O_m),
    \]
    where $O_m$ is a dense open set. 
    Then for each $m$, $\mathcal{M}\setminus O_m$ is closed and has empty interior. 
    And then $\mathcal{M}\setminus O_m$ is a nowhere dense set.
    Hence $\mathcal{M}\setminus \{I+B_w \colon(I+B_w)^* \text{ is hypercyclic }\}$ is meager.
    Therefore $\operatorname{TERG}_{I+B}$ is meager in $\mathcal{M}$.
\end{proof}  

\begin{coro}
    Let $X=\ell_p(\mathbb{Z})$, $1< p<\infty$ and $\mathcal{M}=\{I+B_w \colon w\in c_0(\mathbb{Z})\}\subset (\mathcal{L} (X),\|\cdot\|)$, endowed with the norm topology. Then 
    \[
        \{I+B_w \in \mathcal{M} \colon I+B_w \text{ is mildly mixing}\}
    \]
    is meager in $\mathcal{M}$.
\end{coro}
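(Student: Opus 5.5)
The plan is to reduce the statement to Proposition \ref{I+Bw-TERG-meager}, in the same way the analogous corollary in Section 3 was deduced from Theorem \ref{syndetic-meager-l^p}.

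First, suppose $I+B_w$ is mildly mixing. By definition, it is then weakly disjoint from every hypercyclic operator. In particular, $(I+B_w)\oplus S$ is hypercyclic for every weakly mixing operator $S$ on any Banach space $Y$, so $I+B_w$ is weakly disjoint from all weakly mixing operators. By Theorem \ref{treg-wd-weak-mixing}, specifically the implication (2)$\Rightarrow$(1) of Cardeccia's characterization, $I+B_w$ is topologically ergodic. We therefore obtain the inclusion
\[
\{I+B_w\in\mathcal{M}\colon I+B_w \text{ is mildly mixing}\}\subset \operatorname{TERG}_{I+B}.
\]
For completeness, one should note that weakly mixing operators exist at all, for instance on $\ell_p$ itself (a multiple $\lambda B$ of the backward shift with $\lambda>1$). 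This is needed only so that the quantifier in Cardeccia's theorem is not vacuous, and even that does not matter logically.

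Second, Proposition \ref{I+Bw-TERG-meager} states that $\operatorname{TERG}_{I+B}$ is meager in $\mathcal{M}$. A subset of a meager set is meager, so the set of mildly mixing operators in $\mathcal{M}$ is meager. This completes the argument.

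There is no real obstacle here. The only point needing care is that Theorem \ref{treg-wd-weak-mixing} holds for an arbitrary operator on a Banach space with no additional hypotheses, so it applies to $I+B_w$ on $\ell_p(\mathbb{Z})$. The meagerness itself was already established, via Moothathu's Theorem \ref{T-TERG-T*-not-hyper} and the genericity of hypercyclicity of $(I+B_w)^*$ from Theorem \ref{FSHC_r-star-dense-Gdelta}.
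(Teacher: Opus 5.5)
Your proposal is correct and follows the same route the paper uses (implicitly here, and explicitly for the Section~3 analogue): mild mixing gives weak disjointness from all weakly mixing operators, hence topological ergodicity by Theorem~\ref{treg-wd-weak-mixing}, so the set lies inside the meager set $\operatorname{TERG}_{I+B}$ of Proposition~\ref{I+Bw-TERG-meager}. Your aside about the existence of weakly mixing operators is unnecessary but harmless.
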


Although topological ergodicity is meager in $\mathcal{M}$, weak disjointness still exhibits a typical behavior. 
We next prove this in two forms: first for weakly disjoint partners of a fixed hypercyclic operator $S$, and then for weakly disjoint pairs in $\mathcal{M}\times \mathcal{M}$.

\begin{prop}\label{I+Bw-weakly disjoint}
    Let $X=\ell_p(\mathbb{Z})$, $1< p<\infty$ and $\mathcal{M}=\{I+B_w \colon w\in c_0(\mathbb{Z})\}\subset (\mathcal{L} (X),\|\cdot\|)$, endowed with the norm topology. 
    Let $S$ be a hypercyclic operator on a Banach space $Y$.
    Then 
    \[
        \operatorname{WD}^S_{I+B}:=\{I+B_w \in \mathcal{M} \colon I+B_w \text{ is weakly disjoint from }S\}
    \]
    is residual in $\mathcal{M}$.
\end{prop}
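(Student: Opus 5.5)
We split the statement into a $G_\delta$ part and a density part, as in Proposition \ref{WD_M(X)-residual-lp}, but work in the norm topology on $\mathcal{M}$. By the first paragraph of the proof of Theorem \ref{FSHC_r-star-dense-Gdelta}, $w\mapsto I+B_w$ is an isometry of $c_0(\mathbb{Z})$ onto $\mathcal{M}$, so $\mathcal{M}$ is completely metrizable and hence a Baire space.

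\emph{Step 1: $\operatorname{WD}^S_{I+B}$ is $G_\delta$.} Fix countable bases $(U_m)$ of $X$ and $(V_r)$ of $Y$. Then
\[
\operatorname{WD}^S_{I+B}=\bigcap_{a,b,c,d\geq 1}\ \bigcup_{n\in N_S(V_c,V_d)}\{T\in\mathcal{M}\colon T^n(U_a)\cap U_b\neq\emptyset\}.
\]
For fixed $n$, the map $T\mapsto T^n$ is norm continuous, so each inner set is open in $\mathcal{M}$.

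\emph{Step 2: each union is dense.} Fix $a,b,c,d$, a weight $w$, and $\epsilon>0$. I would take the finitely supported truncation $w'$ constructed in the proof of Theorem \ref{FSHC_r-star-dense-Gdelta}: $\|w'-w\|_\infty<\epsilon$, $w'_k\neq0$ for $k\geq -N$, and $w'_k=0$ for $k<-N$. Choose $x\in U_a$ and $y\in U_b$ finitely supported in $[-L,L]$. That proof shows $x,y\in\ker^\dagger B_{w'}$, so Lemma \ref{kerT} gives $x_k\to x$ with $(I+B_{w'})^kx_k\to y$ along \emph{all} $k\to\infty$.

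This is the key point. For every non-empty open $U,V$, there is $K$ with $(I+B_{w'})^k(U)\cap V\neq\emptyset$ for all $k\geq K$, so $I+B_{w'}$ is in fact mixing. Since $S$ is hypercyclic, it is topologically transitive, and for a hypercyclic operator on a Banach space (no isolated points) the set $N_S(V_c,V_d)$ is infinite. Hence we can pick $n\in N_S(V_c,V_d)$ with $n\geq K$. Then $I+B_{w'}$ lies in the $(a,b,c,d)$-union and is within $\epsilon$ of $I+B_w$.

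The Baire category theorem then finishes the proof. The alternative route is to prove that mixing elements form a dense subset of $\mathcal{M}$ contained in $\operatorname{WD}^S_{I+B}$, since a mixing operator is weakly disjoint from any hypercyclic $S$; this is exactly what Step 2 establishes.

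The main obstacle is making sure Lemma \ref{kerT} really yields convergence along the full sequence $k\to\infty$, and not merely along a subsequence. This is what upgrades transitivity to mixing, and it is as stated in the lemma. The other point to check is that $N_S(V_c,V_d)$ is infinite. For this I would use the standard fact that if $S^{n_0}(V_c)\cap V_d\neq\emptyset$, shrinking the sets and iterating transitivity produces arbitrarily large return times; alternatively, take a hypercyclic vector $y\in V_c$, whose orbit visits $V_d$ infinitely often.
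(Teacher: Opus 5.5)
Restricted to the single pair $U_a,U_b$, your Steps 1 and 2 are the paper's proof: the same $G_\delta$ description through $N_S(V_c,V_d)$, the same perturbed weight $w'$ from the proof of Theorem \ref{FSHC_r-star-dense-Gdelta}, Lemma \ref{kerT} along the full sequence, and a choice of $n\in N_S(V_c,V_d)$ beyond the threshold. That part is correct.

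The sentence you call the key point is false: $I+B_{w'}$ is not mixing. The ``alternative route'' built on it therefore fails too.

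\begin{itemize}
\item The weight $w'$ depends on $(a,b)$. The number $N$ must be chosen after $L$, that is, after the supports of $x\in U_a$ and $y\in U_b$.
\item Lemma \ref{kerT} only applies to vectors in $\ker^\dagger B_{w'}$. Every element of $B_{w'}^n(X)$ vanishes at the coordinates $j\leq -N-2$, so $\ker^\dagger B_{w'}\subset\overline{\operatorname{span}}\{e_j\colon j\geq -N-1\}$. This is not dense, so the lemma cannot reach arbitrary open $U,V$.
\item In fact $I+B_{w'}$ is not even hypercyclic. For $j\leq -N-2$ one has $w'_{j+1}=0$, so $((I+B_{w'})x)_j=x_j+w'_{j+1}x_{j+1}=x_j$. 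Thus the $j$-th coordinate is constant along every orbit, and no orbit is dense.
\end{itemize}

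So Step 2 does not show that mixing operators are dense in $\mathcal{M}$. The paper leaves exactly this as an open question at the end of Section 5.

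The correct statement is this: for the fixed pair $U_a,U_b$ there is $K$ with $(I+B_{w'})^k(U_a)\cap U_b\neq\emptyset$ for all $k\geq K$. That is all your final step actually uses, and with this correction your proof is complete. One minor slip: $w'$ is not finitely supported, since it is non-zero at every $k\geq -N$.
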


\begin{proof}
Let $(U_m)_{m\geq 1}$ be a countable basis of $X$. 
Since $S$ is a hypercyclic operator on a Banach space $Y$, there exists a countable basis $(V_r)_{r\geq 1}$ in $Y$.
For $p,q,s,t\geq 1$, define
\[
D_{p,q,s,t}=\bigcup_{n\in N_S(V_p,V_q)}\left\{ T\in\mathcal{M} \colon 
T^n(U_s)\cap U_t\neq \emptyset \right\}.
\]
Since the map $T\to T^nx$ is continuous in the norm topology, each $D_{p,q,s,t}$ is open.
Then 
\[
\operatorname{WD}^S_{I+B}=\bigcap_{p,q,s,t\geq 1}D_{p,q,s,t}
\]
is a $G_\delta$ subset of $\mathcal{M}$.

It remains to prove density. 
It is enough to show $D_{p,q,s,t}$ is dense. 
Let $I+B_w\in \mathcal{M}$ and let $\epsilon>0$. 
Since $c_{00}(\mathbb{Z})$ is dense in $X$, choose $x\in U_s\cap c_{00}(\mathbb{Z})$ and $y\in U_t\cap  c_{00}(\mathbb{Z})$. 
By the proof of Theorem \ref{FSHC_r-star-dense-Gdelta}, we obtain that there exists $w'\in c_0(\mathbb{Z})$ such that $\|(I+B_w)-(I+B_{w'})\|<\epsilon$ and  $x,y\in \ker^\dagger B_{w'}$. 
By Lemma \ref{kerT}, there exists a sequence $(x_k)_{k\geq 1}\subset X$ such that
\[
x_k\to x, \quad 
(I+B_{w'})^k x_k\to y.
\]
Since $U_s$ and $U_t$ are open sets and $x\in U_s, y\in U_t$, there exists $N\geq 0$ such that for each $k\geq N$, 
\[
    x_k\in U_s,\quad (I+B_{w'})^k x_k\in U_t.
\]
Since $S$ is hypercyclic, $N_S(V_p,V_q)$ is infinite. 
Choose $n\in N_S(V_p,V_q)$ and $n\geq N$. 
Then $(I+B_{w'})^n(U_s)\cap U_t\neq \emptyset$. 
Hence $I+B_{w'}\in D_{p,q,s,t}$. 
Thus $D_{p,q,s,t}$ is dense.
Therefore $\operatorname{WD}^S_{I+B}$ is residual in $\mathcal{M}$.
\end{proof}

\begin{prop}
    Let $X=\ell_p(\mathbb{Z})$, $1< p<\infty$ and $\mathcal{M}=\{I+B_w \colon w\in c_0(\mathbb{Z})\}\subset (\mathcal{L} (X),\|\cdot\|)$, endowed with the norm topology. 
    Then 
    \[
        \operatorname{WD}(X\times X)_{I+B}:=\{(S,T) \in \mathcal{M}\times \mathcal{M} \colon S \text{ is weakly disjoint from }T \}
    \]
    is residual in $\mathcal{M}\times \mathcal{M}$.
\end{prop}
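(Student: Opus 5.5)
The plan is to show that $\operatorname{WD}(X\times X)_{I+B}$ is a $G_\delta$ set and that it is dense in $\mathcal{M}\times\mathcal{M}$. Since $\mathcal{M}\times\mathcal{M}$ is isometric to $c_0(\mathbb{Z})\times c_0(\mathbb{Z})$, it is a Baire space, so these two facts give residuality. For the $G_\delta$ part, fix a countable basis $(U_m)_{m\geq1}$ of $X$ and write
\[
\operatorname{WD}(X\times X)_{I+B}=\bigcap_{p,q,s,t\geq1}\bigcup_{n\geq1}\bigl\{(S,T)\in\mathcal{M}\times\mathcal{M}\colon (S\oplus T)^n(U_p\times U_q)\cap(U_s\times U_t)\neq\emptyset\bigr\}.
\]
For fixed $n$, the map $(S,T)\mapsto (S^nx,T^ny)$ is norm-continuous, so each inner set is open.

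For density, fix $p,q,s,t$, a pair $(I+B_{w_1},I+B_{w_2})$ and $\epsilon>0$. Choose finitely supported vectors $x\in U_p$, $y\in U_q$, $x'\in U_s$, $y'\in U_t$, all supported in some $[-L,L]$. Then repeat the weight modification from the proof of Theorem \ref{FSHC_r-star-dense-Gdelta} separately for each weight. Pick $N>L$ with $\sup_{k<-N}|w_{i,k}|<\epsilon/2$ for $i=1,2$, and define $w_i'$ by setting $w'_{i,k}=0$ for $k<-N$. For $k\geq-N$, perturb $w_{i,k}$ by less than $\epsilon$ so that it becomes non-zero while $w_i'$ stays in $c_0(\mathbb{Z})$. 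Then $\|w_i-w_i'\|_\infty<\epsilon$, and the argument there shows that every vector supported in $[-L,L]$ lies in $\ker^\dagger B_{w_i'}$. In particular $x,x'\in\ker^\dagger B_{w_1'}$ and $y,y'\in\ker^\dagger B_{w_2'}$.

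Now apply Lemma \ref{kerT} to each operator. It gives sequences $x_k\to x$ with $(I+B_{w_1'})^kx_k\to x'$, and $y_k\to y$ with $(I+B_{w_2'})^ky_k\to y'$. Both sequences are indexed by the same exponent $k$, so for all large $k$ we have simultaneously $(x_k,y_k)\in U_p\times U_q$ and $(I+B_{w_1'})^kx_k\in U_s$, $(I+B_{w_2'})^ky_k\in U_t$. Hence $(I+B_{w_1'},I+B_{w_2'})$ lies in the $(p,q,s,t)$-th open set, within distance $\epsilon$ of the given pair, and each of these open sets is dense.

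The only step needing care is that the two convergences hold along a common index $k$. Lemma \ref{kerT} provides this because it is stated with the exponent equal to the sequence index, which is exactly what allowed the common choice of $k$ in the two-coordinate step of Theorem \ref{FSHC_r-star-dense-Gdelta}. With that, the set is a dense $G_\delta$, hence residual.
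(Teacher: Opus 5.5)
Your proof is correct. The $G_\delta$ part matches the paper's, but your density argument takes a different route.

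The paper does not return to the weights at all. It picks a hypercyclic $S\in\mathcal{V}$, which is possible because the weakly mixing, hence hypercyclic, elements of $\mathcal{M}$ form a residual set by Theorem \ref{FSHC_r-star-dense-Gdelta}. It then uses the residuality of $\operatorname{WD}^S_{I+B}$ from Proposition \ref{I+Bw-weakly disjoint} to pick $T\in\mathcal{U}$ weakly disjoint from $S$. So density of the whole $G_\delta$ set is reduced to the one-variable results.

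You instead redo the $\ker^\dagger$ perturbation for both weights at once. You couple the two applications of Lemma \ref{kerT} through the common exponent $k$, exactly as in the weak mixing part of Theorem \ref{FSHC_r-star-dense-Gdelta}, and this proves directly that each open set in your decomposition is dense.

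What each approach buys:
\begin{itemize}
\item The paper's argument is shorter and purely ``soft''. It needs only a residual set of hypercyclic operators together with residual weakly disjoint sections. It is the same pattern the paper uses for Proposition \ref{WD_M(XxX)-residual-lp} in the SOT$^*$ setting, so it transfers to any framework where the fixed-partner result is available.
\item Your argument is self-contained. It does not depend on the fixed-partner proposition or on the residuality of hypercyclicity. It also makes explicit that the same-index form of Lemma \ref{kerT} is what produces simultaneous returns for two different operators.
\end{itemize}
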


\begin{proof}
Let $(U_m)_{m\geq 1}$ be a countable basis of $X$. 
For $p,q,s,t\geq 1$, define
\[
D_{p,q,s,t}=\bigcup_{n\geq 1}\left\{ (S,T)\in\mathcal{M}\times \mathcal{M} \colon 
S^n(U_p)\cap U_q\neq \emptyset \text{ and } T^n(U_s)\cap U_t\neq \emptyset \right\}.
\]
Since the maps $S\to S^nx$ and $T\to T^ny$ are continuous in the norm topology, each $D_{p,q,s,t}$ is open.
Then 
\[
\operatorname{WD}(X\times X)_{I+B}=\bigcap_{p,q,s,t\geq 1}D_{p,q,s,t}
\]
is a $G_\delta$ subset of $\mathcal{M}\times \mathcal{M}$.

It remains to prove density. 
Let $\mathcal{U}$ and $\mathcal{V}$ be two arbitrary non-empty open subsets of $\mathcal{M}$.
We shall prove that $(\mathcal{U}\times \mathcal{V})\cap \operatorname{WD}(X\times X)_{I+B}\neq \emptyset $.
Since 
\[
    \operatorname{HC}(\mathcal{M}):=\{S\in\mathcal{M}:S\text{ is hypercyclic}\}
\]
is residual in $\mathcal{M}$, $\mathcal{V}\cap \operatorname{HC}(\mathcal{M})\neq\emptyset$.
Choose $S\in \mathcal{V}\cap \operatorname{HC}(\mathcal{M})$.
By Proposition \ref{I+Bw-weakly disjoint}, 
\[
        \operatorname{WD}^S_{I+B}:=\{T \in \mathcal{M} \colon T \text{ is weakly disjoint from }S\}
\]
is residual in $\mathcal{M}$.
Therefore, $\mathcal{U}\cap \operatorname{WD}^S_{I+B}\neq \emptyset$.
Choose $T\in \mathcal{U}\cap \operatorname{WD}^S_{I+B}$.
Then $T\oplus S$ is hypercyclic, which means that $(T,S)\in \operatorname{WD}(X\times X)_{I+B}$.
Moreover, by the choice of $T$ and $S$, we have $(T,S)\in
(\mathcal{U}\times\mathcal{V})$.
Hence we conclude that $\operatorname{WD}(X\times X)_{I+B}$ is dense in $\mathcal{M}\times \mathcal{M}$.
\end{proof}

The following interpolation lemma is the main ingredient in proving the typical d-hypercyclicity of powers in $\mathcal{M}$.
It enables us to find vectors whose iterates under $I+T$ approximate any prescribed finite collection of target vectors simultaneously.

\begin{lem}\label{kerT-k-disjoint}
Let $X$ be a normed space and $T\in\mathcal{L}(X)$. 
Assume that there exists a sequence $(u_m)_{m\geq 0}\subset X$ such that $u_0\neq 0$ and
\[
Tu_0=0,\quad Tu_m=u_{m-1},\quad m\geq 1.
\]
Set $E=\operatorname{span}\{u_m\colon m\geq 0\}$. 
Then, for every $k\geq 1$ and every $x_0,x_1,\dotsc,x_k\in E$, there exists a sequence $(z_n)_{n\geq 1}\subset E$ such that
\[
(I+T)^{jn}z_n\to x_j,\quad j=0,1,\dotsc,k, 
\]
as $n\to\infty$. 
Here the assertion for $j=0$ means $z_n\to x_0$.
\end{lem}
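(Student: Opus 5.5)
The plan is to reduce to elementary targets, replace the iterates $(I+T)^{jn}$ by an explicit family of polynomials in $n$, and then solve a linear interpolation problem.

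\emph{Reductions and notation.} First, the $u_m$ are linearly independent: $T^m u_m=u_0\neq 0$, while $T^{m}u_{m'}=0$ for $m'<m$. Hence every element of $E$ has unique coordinates, and each $F_R:=\operatorname{span}\{u_0,\dotsc,u_R\}$ is $T$-invariant with $T^{R+1}|_{F_R}=0$. On $F_R$ we have the finite binomial expansion
\[
(I+T)^{m}=\sum_{l=0}^{R}\binom{m}{l}T^l ,
\]
whose coefficients are polynomials in $m$. The statement is linear in the data: if $(z_n)$ works for $(x_0,\dotsc,x_k)$ and $(z_n')$ works for $(x_0',\dotsc,x_k')$, then $(z_n+z_n')$ works for the sum. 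So it suffices to treat targets of the form $x_{j}=\delta_{j,j_0}u_a$ for a fixed $j_0\in\{0,\dotsc,k\}$ and $a\geq 0$. The general case then follows by summing finitely many such sequences. Since all $z_n$ will lie in a fixed $F_R$, which is finite-dimensional, norm convergence there is the same as coordinatewise convergence, so the particular norm of $X$ plays no role.

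\emph{Ansatz.} Take
\[
z_n=\sum_{N=a}^{a+L} c_N\, n^{-(N-a)}\,u_N ,
\]
with scalars $c_N$ independent of $n$ and $L$ to be chosen. Applying $(I+T)^{jn}$, the coefficient of $u_b$ equals
\[
\sum_{N} c_N\, n^{a-N}\binom{jn}{N-b}.
\]
Since $\binom{jn}{l}$ is a polynomial in $n$ of degree $l$ with leading coefficient $j^l/l!$, this is a Laurent polynomial in $n$ whose powers are at most $n^{a-b}$, and $a-b\leq 0$ for $b\geq a$. Convergence to the target therefore amounts to finitely many linear conditions on $(c_N)$. For $b<a$, and for every $j$, the coefficients of $n^{s}$ with $s\geq 0$ must vanish. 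For $b=a$, the constant term must equal $\delta_{j,j_0}$ and all higher powers must vanish. For $b>a$ nothing is required, because those terms are automatically $O(n^{-1})$. The coefficient of $n^{s}$ in the $u_b$-entry is an explicit linear form in the $c_N$, built from Stirling-type expansions of $\binom{jn}{l}$.

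\emph{Solvability, the main obstacle.} One must show that this finite linear system in $(c_N)_{a\le N\le a+L}$ is consistent once $L$ is large. The number of conditions is bounded independently of $L$: at most $(k+1)(a+1)\cdot(\text{a bound on the relevant powers of }n)$. So it suffices to prove that the linear forms involved are linearly independent as functionals on $(c_N)$. I would first try to identify the leading-order forms with $c\mapsto \sum_N c_N\, j^{N-b}/(N-b)!$, that is, with evaluations at the points $j=0,\dotsc,k$ of derivatives of the function $Q(t)=\sum_N c_N t^{N-a}/(N-a)!$. Independence of such evaluation-type functionals on polynomials of large degree is a Hermite/Vandermonde statement. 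Lower-order contributions, coming from non-leading coefficients of $\binom{jn}{l}$, give a triangular perturbation, which I expect not to destroy independence.

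If this bookkeeping becomes unmanageable, the fallback is an induction on $k$ in the spirit of the proof of Theorem~\ref{Mul-DHC-residual}. There one feeds the target for $(I+T)^{jn}$ into a correction term of size $n^{-\text{(large power)}}$ placed high in the chain, whose lower iterates stay $O(n^{-1})$. The case $k=1$ is exactly Lemma~\ref{kerT}, applied with $E\subset\ker^\dagger T$, since $u_m=T^{N-m}u_N\in T^{N-m}(X)\cap\ker T^{m+1}$ for any $N\geq 2m+1$.
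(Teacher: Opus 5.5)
Your reductions (independence of the $u_m$, linearity in the targets, confinement to a fixed finite-dimensional $F_R$) are fine. However, the step you flag as the main obstacle actually fails. With $c_N$ independent of $n$, the linear system is inconsistent in general, and no choice of $L$ helps. Take $k=1$, $x_0=0$, $x_1=u_2$, and $z_n=\sum_{N\geq 2}c_N n^{2-N}u_N$. Since $\binom{n}{l}=\frac{1}{l!}\bigl(n^l-\binom{l}{2}n^{l-1}+\dotsb\bigr)$, the $u_0$-coordinate of $(I+T)^nz_n$ is
\[
\sum_{N}c_N n^{2-N}\binom{n}{N}=\alpha n^2+\beta n+O(1),\qquad \beta=-\frac12\sum_{N}\frac{c_N}{(N-2)!}.
\]
Its $u_2$-coordinate $\sum_N c_N n^{2-N}\binom{n}{N-2}$ tends to $\sum_N c_N/(N-2)!=-2\beta$. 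You would need $\beta=0$ and $-2\beta=1$ simultaneously. The same computation works for every $a\geq 2$ and $j\geq 1$. Put $Q(x)=\sum_N c_Nx^N/N!$. Then the $n^1$-coefficient of the $u_{a-2}$-entry of $(I+T)^{jn}z_n$ is $-\frac{j}{2}Q^{(a)}(j)$, while the $n^0$-coefficient of the $u_a$-entry is $Q^{(a)}(j)$. Hence every elementary target $\delta_{j,j_0}u_a$ with $a\geq 2$ and $j_0\geq 1$ is out of reach of your ansatz. The non-leading Stirling coefficients are therefore not a harmless triangular perturbation of your Hermite-type forms. They reproduce leading forms that belong to other entries, so the linear independence you need is false.

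What is missing is enough freedom in $z_n$: the coefficient of each $u_N$ must be a genuine Laurent polynomial in $n$, not a single monomial $c_Nn^{a-N}$. The paper gets such coefficients without any bookkeeping. It passes to the model $\Phi(u_m)=\binom{t}{m}$, in which $I+T$ becomes the shift $P(t)\mapsto P(t+1)$. It then takes $P_n(t)=\sum_{j=0}^k L_j(t/n)^{d+1}Q_j(t-jn)$, where $L_j$ are the Lagrange polynomials on the nodes $0,\dotsc,k$ and $d\geq\max_j\deg Q_j$. Then $P_n(t+\ell n)\to Q_\ell$ coefficientwise. The reason is that for $j\neq\ell$ one has $L_j(\ell+s)^{d+1}=s^{d+1}R_{j,\ell}(s)$, and the resulting factor $n^{-(d+1)}$ beats the $O(n^d)$ coefficients of $Q_j(t+(\ell-j)n)$.

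Your fallback does not fill the gap either. In Theorem~\ref{Mul-DHC-residual} the iterates $T^n,\dotsc,T^{kn}$ are separated because $T$ is designed with geometric weights $\lambda>1$. Here $T$ is given, and all $z_n$ must stay in a fixed $F_R$, since nothing controls $\|u_N\|$. For a single correction term $n^{-P}u_N$, the coordinates $\binom{jn}{N-b}n^{-P}$ of its $(I+T)^{jn}$-images for $j\geq 1$ differ from those for $j_0$ by factors tending to $(j/j_0)^{N-b}\neq 0$. So no choice of sizes makes the images for $j\neq j_0$ small while keeping the $j_0$-image of order one. You need exact cancellations, which is precisely the interpolation problem your main argument leaves unsolved. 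Finally, Lemma~\ref{kerT} as stated only gives approximants in $X$, not in $E$.
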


\begin{proof}
The vectors $u_0,u_1,\dotsc$ are linearly independent. 
Indeed, suppose that $\sum_{m=0}^{M}a_mu_m=0$ with $a_M\neq 0$. 
Applying $T^M$ to the above equality, we obtain $a_Mu_0=0$, which contradicts $u_0\neq 0$.
Hence $(u_m)_{m\geq 0}$ is a basis of  $E=\operatorname{span}\{u_m\colon m\geq 0\}$.

Let $\mathbb{K}[t]$ denote the space of polynomials over $\mathbb{K}$.
For $m\geq 0$, define the Newton polynomials by
\[
 \binom{t}{0}=1 \quad \text{and} \quad 
 \binom{t}{m}=\frac{t(t-1)\dotsb(t-m+1)}{m!}\quad (m\geq 1).
\]
The Newton polynomials $\binom{t}{m}$, $m\geq 0$, form a basis of
$\mathbb{K}[t]$.
Motivated by Pascal's identity, 
\[
\binom{t+1}{m}=\binom{t}{m}+\binom{t}{m-1},
\]
we identify the basis $(u_m)_{m\geq0}$
with the Newton basis $\binom{t}{m}$, $m\geq 0$ by defining the linear isomorphism
\[
 \Phi:E\longrightarrow \mathbb{K}[t],
 \quad
 \Phi(u_m)=\binom{t}{m}.
\]
Hence
\[
 \Phi((I+T)u_m)(t)=\Phi(u_m+u_{m-1})(t)=\binom{t}{m}+\binom{t}{m-1}=\binom{t+1}{m}=\Phi(u_m)(t+1),
\]
and therefore for every $x\in E$ and $N\geq 0$,
\[
    \Phi((I+T)^Nx)(t)=\Phi(x)(t+N).
\]

Now fix $k\geq 1$ and $x_0,x_1,\dotsc,x_k\in E$.
For each $0\leq j\leq k$, let $Q_j=\Phi(x_j)$.
Pick an integer $d\geq 0$ such that $\max_{0\leq j\leq k}\deg Q_j\leq d$. 
Using the standard Lagrange–Hermite interpolation (see e.g. \cite[Chapter 3]{A89}), we will construct a sequence of polynomials $(P_n)_{n\geq 1}$ such that $P_n(t+jn)\to Q_j(t)$ coefficientwise for each $j=0,\dotsc,k$.

For each $0\leq j\leq k$, let $L_j$ be the Lagrange basis polynomial  associated with the interpolation nodes $0,1,\dotsc,k$, given by
\[
L_j(s)=\prod_{\substack{0\leq r\leq k\\ r\neq j}}\frac{s-r}{j-r}.
\]
These polynomials satisfy
\[
L_j(r)=
\begin{cases}
1,& r=j,\\
0,& r\neq j,
\end{cases}
\quad r=0,1,\dotsc,k.
\]
Define 
\[
  H_j(s)=L_j(s)^{d+1}, \quad 0\leq j\leq k.
\]
Then $H_j(j)=1$, and $H_j$ has a root of order at least $d+1$ at every $r\neq j$. 
For $n\geq 1$, define
\[
P_n(t)=\sum_{j=0}^k H_j\left(\frac tn\right)Q_j(t-jn).
\]
Since $\deg P_n\leq k(d+1)+d=:m_0$, the degrees of the polynomials $P_n$ are bounded independently of $n$.
It follows from the calculation that for fixed $\ell\in\{0,\ldots,k\}$,
\[
P_n(t+\ell n)=\sum_{j=0}^kH_j\left(\ell+\frac tn\right)Q_j(t+(\ell-j)n).
\]
For $j=\ell$, we have
\[
 H_\ell\left(\ell+\frac{t}{n}\right)Q_\ell(t)
 \to Q_\ell(t)
\]
coefficientwise. 
If $j\neq\ell$, then $L_j(\ell)=0$.
Hence, by the factor theorem, $s-\ell$ is a factor of $L_j(s)$, and consequently $(s-\ell)^{d+1}$
is a factor of $H_j(s)=L_j(s)^{d+1}$.
Therefore, there exists a polynomial $R_{j,\ell}\in \mathbb{K}[s]$ such that
\[
H_j(\ell+s)=s^{d+1}R_{j,\ell}(s).
\]
Consequently,
\[
 H_j\left(\ell+\frac{t}{n}\right)
 Q_j(t+(\ell-j)n)
 =
 \frac{1}{n}\,t^{d+1}
 R_{j,\ell}\left(\frac{t}{n}\right)
 \left[n^{-d}Q_j(t+(\ell-j)n)\right].
\]
Since $\deg Q_j\leq d$, the coefficients of
$n^{-d}Q_j(t+(\ell-j)n)$ remain bounded as $n\to\infty$. 
Thus the corresponding term is of order $O(\frac{1}{n})$ coefficientwise.
Hence 
\[
P_n(t+\ell n)\to Q_\ell(t)
\]
coefficientwise.

Since $\deg P_n\leq m_0$, we also have $\deg P_n(\,\cdot+\ell n)\leq m_0$
for every $\ell=0,\dotsc,k$. Thus, for each fixed $\ell$, the polynomials $P_n(\,\cdot+\ell n)$ and $Q_\ell$ all belong to the finite-dimensional space
\[
\mathcal{P}_{m_0}
=
\{P\in\mathbb{K}[t]:\deg P\leq m_0\}.
\] 
Since $\mathcal{P}_{m_0}$ is finite-dimensional, coefficientwise convergence is equivalent to convergence with respect to any norm on $\mathcal{P}_{m_0}$. 
Moreover, the restriction of $\Phi^{-1}$ to $\mathcal{P}_{m_0}$ is continuous.  
Let $z_n=\Phi^{-1}(P_n)$. 
It follows that
\[
\Phi^{-1}\bigl(P_n(\,\cdot+\ell n)\bigr) \to  \Phi^{-1}(Q_\ell)=x_\ell.
\]
Since $\Phi((I+T)^Nx)(t)=\Phi(x)(t+N)$ for every $x\in E$ and $N\geq 0$, taking $N=\ell n$ and $x=z_n$, we obtain 
\[
    \Phi\bigl((I+T)^{\ell n}z_n\bigr)(t)=P_n(t+\ell n).
\]
Therefore, 
\[
(I+T)^{\ell n}z_n=\Phi^{-1}\bigl(P_n(\,\cdot+\ell n)\bigr)\to x_\ell
\]
for every $\ell=0,\dotsc,k$. 
Hence 
\[
(I+T)^{jn}z_n\to x_j, \quad j=0,\dotsc,k.
\]
as $n\to\infty$.
\end{proof}

We now apply the above lemma to the family $\mathcal{M}$ of perturbations of weighted backward shifts and prove the typical d-hypercyclicity of powers.

\begin{thm}\label{Mul-DHC-I+B-residual}
Let $X=\ell_p(\mathbb{Z})$, $1\leq  p<\infty$ and $\mathcal{M}=\{I+B_w \colon w\in c_0(\mathbb{Z})\}\subset (\mathcal{L} (X),\|\cdot\|)$, endowed with the norm topology. 
Then 
\[
    \operatorname{Mul-DHC}(X)_{I+B}:=\left\{T\in \mathcal{M}\colon (T,T^2,\dotsc,T^k) \text{ is densely d-hypercyclic }, \forall k\geq 2 \right\}
\]
is residual in $\mathcal{M}$.

In particular, for a typical $I+B_w\in\mathcal{M}$, the $k$-tuple $(I+B_w,(I+B_w)^2,\dotsc,(I+B_w)^k)$ is d-hypercyclic for every $k\geq 2$.
\end{thm}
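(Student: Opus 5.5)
Fix $k\geq 2$ and a countable basis $(U_m)_{m\geq 1}$ of non-empty open subsets of $X=\ell_p(\mathbb{Z})$. Set
\[
\mathcal{D}_k:=\bigcap_{i_0,\dotsc,i_k\geq 1}\bigcup_{n\geq 1}\bigl\{T\in\mathcal{M}\colon U_{i_0}\cap T^{-n}U_{i_1}\cap\dotsb\cap T^{-kn}U_{i_k}\neq\emptyset\bigr\}.
\]
As in the proof of Theorem \ref{Mul-DHC-residual}, $T\in\mathcal{D}_k$ exactly when $(T,T^2,\dotsc,T^k)$ is densely d-hypercyclic; indeed, the $G_\delta$ set of d-hypercyclic vectors then meets every $U_{i_0}$. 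Each inner set is norm-open in $\mathcal{M}$, because $T\mapsto T^{jn}x$ is norm-continuous. Hence $\mathcal{D}_k$ is $G_\delta$. Since $\mathcal{M}$ is isometric to $c_0(\mathbb{Z})$ and therefore Baire, it suffices to show that each union $\bigcup_n\{\dotsb\}$ is dense. Then $\operatorname{Mul-DHC}(X)_{I+B}=\bigcap_{k\geq 2}\mathcal{D}_k$ is residual.

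For density, fix $I+B_w$, $\epsilon>0$, and indices $i_0,\dotsc,i_k$. Pick $x_j\in U_{i_j}\cap c_{00}(\mathbb{Z})$ with all supports contained in $[-L,L]$. Exactly as in Theorem \ref{FSHC_r-star-dense-Gdelta}, choose $N>L$ with $\sup_{\ell<-N}|w_\ell|<\epsilon/2$. Then choose $w'\in c_0(\mathbb{Z})$ with $\|w-w'\|_\infty<\epsilon$, $w'_\ell=0$ for $\ell<-N$, and $w'_\ell\neq 0$ for $\ell\geq -N$. Set $T=B_{w'}$ and build a Jordan chain: $u_0:=e_{-N-1}$, and
\[
u_m:=\bigl(w'_{-N}w'_{-N+1}\dotsm w'_{-N-1+m}\bigr)^{-1}e_{-N-1+m}\quad(m\geq 1).
\]
Then $Tu_0=w'_{-N-1}e_{-N-2}=0$ and $Tu_m=u_{m-1}$. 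The span $E$ of this chain equals $\operatorname{span}\{e_\ell\colon \ell\geq -N-1\}\cap c_{00}(\mathbb{Z})$, so it contains every $x_j$, because $-L>-N-1$.

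Lemma \ref{kerT-k-disjoint} now yields $z_n\in E$ with $(I+B_{w'})^{jn}z_n\to x_j$ for $j=0,\dotsc,k$. For $n$ large we get $z_n\in U_{i_0}$ and $(I+B_{w'})^{jn}z_n\in U_{i_j}$ for every $j$, so $I+B_{w'}$ lies in the union and is within $\epsilon$ of $I+B_w$. The final assertion follows because dense d-hypercyclicity implies d-hypercyclicity.

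The main obstacle is verifying that the chain hypotheses of Lemma \ref{kerT-k-disjoint} hold with a single chain whose span contains all the targets. The normalization above handles this because the weights $w'_\ell$ for $\ell\geq -N$ are nonzero. No restriction on $p$ enters: the argument uses only finitely supported vectors and norm continuity, so it also covers $p=1$, consistent with the stated range $1\leq p<\infty$.
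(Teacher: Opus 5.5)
Your proposal is correct and follows the paper's proof essentially step for step. It uses the same $G_\delta$ description via the sets $D_{n,i_0,\dotsc,i_k}$, the same perturbation of $w$ to a weight vanishing below $-N$ and nonzero from $-N$ on, the same Jordan chain starting at $u_0=e_{-N-1}$, and the same appeal to Lemma \ref{kerT-k-disjoint}. You also make explicit two details the paper leaves implicit: the tail estimate $\sup_{\ell<-N}|w_\ell|<\epsilon/2$ that justifies the perturbation, and why membership in $\mathcal{D}_k$ is equivalent to dense d-hypercyclicity.
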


\begin{proof}
Fix $k\geq2$ and a countable basis $(U_m)_{m\geq1}$ of non-empty open subsets of $X$. 
For $n\geq 0$ and $i_0,\dotsc,i_k\geq 1$, define
\[
D_{n,i_0,\dotsc,i_k}=\{T\in\mathcal{M}\colon U_{i_0}\cap T^{-n}U_{i_1}\cap\dotsc \cap T^{-kn}U_{i_k}\neq \emptyset\}.
\]
We observe that 
\[
\left\{
 T\in\mathcal{M}:(T,T^2,\dotsc,T^k)
 \text{ is densely d-hypercyclic }\right\}=\bigcap_{i_0,\dotsc,i_k\geq 1}\bigcup_{n\geq 0}D_{n,i_0,\dotsc,i_k}.
\]
Each $D_{n,i_0,\dotsc,i_k}$ is open by continuity of $T\mapsto T^{jn}x$ with respect to the norm topology.

To prove density, fix $I+B_w$ and $\epsilon>0$. 
Fix also $i_0,\dotsc,i_k\geq 1$.
Choose $x_j\in U_{i_j}\cap c_{00}(\mathbb{Z})$ for $j=0,1,\dotsc,k$ and choose $L\geq 1$ such that 
\[
  \operatorname{supp}x_j\subset[-L,L], \quad j=0,\dotsc,k.
\]
Since $w\in c_0(\mathbb Z)$, choose $N>L$ and perturb $w$ to $w'\in c_0(\mathbb{Z})$ such that
\[
\|w'-w\|_\infty<\epsilon,
\quad w'_r=0\ (r<-N),
\quad w'_r\neq 0\ (r\geq -N).
\]
Set $r_0=-N-1$. 
Then $w'_{r_0}=0$.
Define $u_0=e_{r_0}$, 
and for $m\geq 1$, 
\[
u_m=\frac{e_{r_0+m}}{w'_{r_0+1}w'_{r_0+2}\dotsc w'_{r_0+m}}.
\]
The denominator is well-defined since
$r_0+j\geq -N$ for every $j\geq 1$.
Indeed, $B_{w'}u_0=0$, and for $m\geq 1$,
\[
B_{w'}u_m=\frac{w'_{r_0+m}e_{r_0+m-1}}{w'_{r_0+1}\dotsc w'_{r_0+m}}=\frac{e_{r_0+m-1}}{w'_{r_0+1}\cdots w'_{r_0+m-1}}=u_{m-1}.
\]
Moreover, for every basis vector $e_q$ appearing in the supports of
$x_0,\dotsc,x_k$, we have $-L\leq q\leq L$. 
Taking $m=q-r_0=q+N+1$, 
we obtain $m\geq N-L+1>0$.
Hence
\[
e_q
\in
\operatorname{span}\{u_m\colon m\geq 0\},
\]
and consequently
\[
x_0,\dotsc,x_k
\in
\operatorname{span}\{u_m\colon m\geq 0\}.
\]
Applying Lemma \ref{kerT-k-disjoint} with $T=B_{w'}$, there exists a sequence $(z_n)_{n\geq 1}$ such that
\[
z_n\to x_0
\quad\text{and}\quad 
(I+B_{w'})^{jn}z_n\to x_j, \quad j=1,\dotsc,k.
\]
Since the sets $U_{i_0},\dotsc,U_{i_k}$ are open, for all sufficiently
large $n$,
\[
z_n\in U_{i_0}
\quad\text{and}\quad 
(I+B_{w'})^{jn}z_n\in U_{i_j},
\quad j=1,\dotsc,k.
\]
Hence for sufficiently large $n$, $I+B_{w'}\in D_{n,i_0,\dotsc,i_k}$ and $I+B_{w'}$ is arbitrarily close to $I+B_w$. 
Therefore the union of the $D_{n,i_0,\dotsc,i_k}$ is dense. 
The Baire theorem completes the proof.
\end{proof}

We conclude this section with the proof of the second main theorem stated in the introduction.

\begin{proof}[Proof of Theorem \ref{main-result-2}]
    Assertion (1) is exactly Theorem \ref{FSHC_r-star-dense-Gdelta}.
    Assertion (2) follows from Proposition \ref{I+Bw-TERG-meager}.
    Assertion (3) is obtained from Proposition \ref{I+Bw-weakly disjoint}.
    Assertion (4) is Theorem \ref{Mul-DHC-I+B-residual}. 
    Thus all assertions of Theorem \ref{main-result-2} have been proved.
\end{proof}

We end this section with the following natural question:
\begin{ques}
    When $X=\ell_p(\mathbb{Z})$, $1< p<\infty$ and $\mathcal{M}=\{I+B_w \colon w\in c_0(\mathbb{Z})\}\subset (\mathcal{L} (X),\|\cdot\|)$, endowed with the norm topology,  
    is $\{I+B_w \in \mathcal{M} \colon I+B_w \text{ is mixing } \}$ dense in $\mathcal{M}$?
\end{ques}

\noindent \textbf{Acknowledgments}: 
The authors were partially supported by the National Key R\&D Program of China (2024YFA1013601), National Natural Science Foundation of China (12222110) and a grant from the Guangdong Provincial Department of Education (2025KCXTD013).


\begin{thebibliography}{88}

\bibitem{A97}
Ansari, Shamim I. Existence of hypercyclic operators on topological vector spaces. J. Funct. Anal. 148 (1997), no. 2, 384--390.

\bibitem{A89}
Atkinson, Kendall E. An introduction to numerical analysis. Second edition. John Wiley \& Sons, Inc., New York, 1989. 

\bibitem{BM09}
Bayart, Frédéric; Matheron, Étienne. Dynamics of linear operators. Cambridge Tracts in Mathematics, 179. Cambridge University Press, Cambridge, 2009.

\bibitem{P89}
Pedersen, Gert K. Analysis now. Graduate Texts in Mathematics, 118. Springer-Verlag, New York, 1989.

\bibitem{B99}
Bernal-González, Luis. On hypercyclic operators on Banach spaces. Proc. Amer. Math. Soc. 127 (1999), no. 4, 1003--1010

\bibitem{B07} 
Bernal-González, Luis. Disjoint hypercyclic operators. Studia Math. 182 (2007), no. 2, 113--131.

\bibitem{BC03} 
Bès, Juan; Chan, Kit C. Denseness of hypercyclic operators on a Fréchet space. Houston J. Math. 29 (2003), no. 1, 195--206.

\bibitem{BP07} 
Bès, Juan; Peris, Alfredo. Disjointness in hypercyclicity. J. Math. Anal. Appl. 336 (2007), no. 1, 297--315.

\bibitem{BMPP16}
Bès, J.; Menet, Q.; Peris, A.; Puig, Y. Recurrence properties of hypercyclic operators. Math. Ann. 366 (2016), no. 1-2, 545--572.

\bibitem{BMPP19}
Bès, J.; Menet, Q.; Peris, A.; Puig, Y. Strong transitivity properties for operators. J. Differential Equations 266 (2019), no. 2-3, 1313--1337.

\bibitem{BP98}
Bonet, José; Peris, Alfredo. Hypercyclic operators on non-normable Fréchet spaces. J. Funct. Anal. 159 (1998), no. 2, 587--595.

\bibitem{C24} 
Cardeccia, Rodrigo. Disjoint hypercyclicity, Sidon sets and weakly mixing operators. Ergodic Theory Dynam. Systems 44 (2024), no. 5, 1315--1329.

\bibitem{C02}
Chan, Kit C. The density of hypercyclic operators on a Hilbert space. J. Operator Theory 47 (2002), no. 1, 131--143. 

\bibitem{E10}
Eisner, Tanja. A "typical" contraction is unitary. Enseign. Math. (2) 56 (2010), no. 3-4, 403--410.

\bibitem{EM13}
Eisner, Tanja; Mátrai, Tamás. On typical properties of Hilbert space operators. Israel J. Math. 195 (2013), no. 1, 247--281.

\bibitem{FW06}
Furstenberg, Hillel; Weiss, Benjamin. The finite multipliers of infinite ergodic transformations. The structure of attractors in dynamical systems (Proc. Conf., North Dakota State Univ., Fargo, N.D., 1977), pp. 127--132, Lecture Notes in Math., 668, Springer, Berlin-New York, 1978. 

\bibitem{G04}
Glasner, Eli. Classifying dynamical systems by their recurrence properties. Topol. Methods Nonlinear Anal. 24 (2004), no. 1, 21--40.

\bibitem{GM22}
Grivaux, S.; Matheron, É. Local spectral properties of typical contractions on $\ell_p$-spaces. Anal. Math. 48 (2022), no. 3, 755--778.

\bibitem{GMM21}
Grivaux, S.; Matheron, É.; Menet, Q. Linear dynamical systems on Hilbert spaces: typical properties and explicit examples. Mem. Amer. Math. Soc. 269 (2021), no. 1315, v+147.

\bibitem{GMMlp21}
Grivaux, Sophie; Matheron, Étienne; Menet, Quentin. Does a typical $\ell_p$-space contraction have a non-trivial invariant subspace? Trans. Amer. Math. Soc. 374 (2021), no. 10, 7359--7410. 
%S. Grivaux, É. Matheron, Q. Menet, Does a typical $\ell_p$-space contraction have a non-trivial invariant
%subspace?, Trans. Am. Math. Soc. 374 (10) (2021) 7359--7410.

\bibitem{GMM26}
Grivaux, Sophie; Matheron, Étienne; Menet, Quentin.  Generic properties of $\ell_p$-contractions and similar operator
topologies. Israel J. Math. (2026), published online.
%, doi: 10.1007/s11856-026-2901-z.

\bibitem{G11}
Grosse-Erdmann, Karl-G.; Peris Manguillot, Alfredo. Linear chaos. Universitext. Springer, London, 2011. 


\bibitem{HY04}
Huang, Wen; Ye, Xiangdong. Topological complexity, return times and weak disjointness. Ergodic Theory Dynam. Systems 24 (2004), no. 3, 825--846.

\bibitem{LLR25}
%Jian Li and Qijing Liao and Yonghang Ruan. 
Li, Jian; Liao, Qijing; Ruan, Yonghang. 
Weak disjointness of hypercyclic operators. to appear in Ergodic Theory Dynam. Systems. arXiv:2512.08519. 

\bibitem{M13}
Moothathu, T. K. Subrahmonian. Two remarks on frequent hypercyclicity. J. Math. Anal. Appl. 408 (2013), no. 2, 843--845.

\bibitem{P72} 
Peleg, Reuven. Weak disjointness of transformation groups. Proc. Amer. Math. Soc. 33 (1972), 165--170.

\bibitem{RA12}
Rodríguez-Martínez, Alejandro. Residuality of sets of hypercyclic operators. Integral Equations Operator Theory 72 (2012), no. 3, 301--308. 

\bibitem{S95}
Salas, Héctor N. Hypercyclic weighted shifts. Trans. Amer. Math. Soc. 347 (1995), no. 3, 993--1004.

\bibitem{S10}
Shkarin, Stanislav. A short proof of existence of disjoint hypercyclic operators. J. Math. Anal. Appl. 367 (2010), no. 2, 713--715.

\bibitem{S15}
Shkarin, Stanislav. Existence theorems in linear chaos. J. Gen. Lie Theory Appl. 9 (2015), no. S1, Art.

\bibitem{W94}
Wu, Pei Yuan. Sums and products of cyclic operators. Proc. Amer. Math. Soc. 122 (1994), no. 4, 1053--1063.

\end{thebibliography}
\end{document}